\newcommand{\pp}[1]{\ensuremath{\stackrel{#1}{\cdot}}} % partial product
\newcommand{\bsg}{Balog-Szemer\'{e}di-Gowers} % BSG...
\newcommand{\ST}{Szemer\'{e}di-Trotter} % szemeredi-trotter...
\DeclareMathOperator{\sym}{Sym}
\DeclareMathOperator{\stab}{Stab}
\DeclareMathOperator{\trans}{Trans}
\newcommand{\ceil}[1]{\ensuremath{\lceil#1\rceil}} % ceiling
\newcommand{\floor}[1]{\ensuremath{\lfloor#1\rfloor}} % floor
\newcommand{\aff}[2]{\ensuremath{\mathrm{Aff}(#1,#2)}}
\newcommand{\C}{\ensuremath{\mathbb{C}}}
\newcommand{\F}{\ensuremath{\mathbb{F}}}
\newcommand{\Q}{\ensuremath{\mathbb{Q}}}
\newcommand{\R}{\ensuremath{\mathbb{R}}}
\newcommand{\Z}{\ensuremath{\mathbb{Z}}}
\newcommand{\andd}{\ensuremath{\qquad\mbox{and}\qquad}} 
\renewcommand{\epsilon}{\varepsilon}
\newcommand{\actson}{\ensuremath{\curvearrowright}}
\newtheorem*{rep@theorem}{\rep@title}
\newcommand{\newreptheorem}[2]{%
\newenvironment{rep#1}[1]{%
 \def\rep@title{#2 \ref{##1}}%
 \begin{rep@theorem}}%
 {\end{rep@theorem}}}
\theoremstyle{plain}% default
\newtheorem{thm}{Theorem}
\newtheorem{lem}[thm]{Lemma}
\newtheorem{prop}[thm]{Proposition}
\newtheorem{cor}[thm]{Corollary}
\newtheorem{claim}{Claim}
\newtheorem*{claim*}{Claim}
\theoremstyle{definition}
\newtheorem{defn}[thm]{Definition}
\newtheorem*{definition*}{Definition}
\newtheorem{conj}[thm]{Conjecture}
\newtheorem*{example}{Example}
\theoremstyle{remark}
\newtheorem*{remark}{Remark}
\begin{document}

% % -- title page, etc. --
\title{Upper and lower bounds for rich lines in grids}
\author{Brendan Murphy}
\date{\today} %{Start: 27 Sep 2017}
\maketitle

\begin{abstract}
We prove upper and lower bounds for the number of lines in general position that are rich in a Cartesian product point set.
This disproves a conjecture of Solymosi and improves work of Elekes, Borenstein and Croot, and Amirkhanyan, Bush, Croot, and Pryby.

The upper bounds are based on a version of the asymmetric \bsg{} theorem for \emph{group actions} combined with product theorems for the affine group.
The lower bounds are based on a connection between rich lines in Cartesian product sets and \emph{amenability} (or expanding families of graphs in the finite field case).

As an application of our upper bounds for rich lines in grids, we give a geometric proof of the asymmetric sum-product estimates of Bourgain and Shkredov.
\end{abstract}

% -- table of contents --
\setcounter{tocdepth}{1}
\tableofcontents

\section{Introduction}

Let $\F$ be a field and let $0<\alpha\leq 1$ be a real number.

A line $\ell$ in the plane $\F^2$ is \emph{$\alpha$-rich}\/ in a Cartesian product point set $Y\times Y\subseteq\F^2$ if
\[
|\ell\cap (Y\times Y)|\geq \alpha|Y|.
\]
For short, we call $Y\times Y$ a \emph{grid}.
Any line contains at most $N$ points of a $N\times N$ grid, so a line is $\alpha$-rich if it contains $\alpha$-percent of the maximum possible points of incidence.
The parameter $\alpha$ may be a constant independent of $N$, or may be some small power of $1/N$.

There are two questions we wish to answer about rich lines in grids:
\begin{enumerate}
\item how many $\alpha$-rich lines can a $N\times N$ grid support?
\item if a grid supports many rich lines, must these lines have some \emph{structure}?
\end{enumerate}
The first question was answered for $\F=\R$ by \ST{} \cite{szemeredi1983extremal}: a $N\times N$ grid has at most $O(\alpha^{-3}N)$ $\alpha$-rich lines; this is sharp.
Below, we discuss extensions of this upper bound and the examples that provide lower bounds.
(We use standard asymptotic notation; see Section~\ref{sec:notation} for definitions.)

The second question is an \emph{inverse problem}\/ for point-line incidences.
The inverse problem for the \ST{} theorem is to show that if $n$ points and $n$ lines in $\R^2$ have $\Omega(n^{4/3})$ incidences, then the point set has some \emph{structure} \cite[Problem 5.7]{croot2007open}; sharpness examples suggest that the point set might contain a large Cartesian product of arithmetic progressions.
Even under the assumption that the point set is a Cartesian product, little is known about the inverse problem for \ST{}.
Question 2 has the further simplifying assumption that the lines are \emph{rich}; in this case, it is possible to give a precise description of the set of lines and the point set \cite{elekes1998linear,elekes2002versus}.

Solymosi conjectured that in the \emph{absence}\/ of structure, a grid can support at most a \emph{constant}\/ number of $\alpha$-rich lines \cite[Conjecture 3.10]{elekes2002versus}.
A generic collection of lines contains no two parallel lines and no three lines through a common point; such a set of lines is said to be in \emph{general position}.
Solymosi's conjecture is the following.
\begin{conj}[{\cite[Conjecture 3.10]{elekes2002versus}}]
  \label{conj:solymosi}
Among the lines that are $\alpha$-rich in a $N\times N$ Cartesian product, at most $C=C(\alpha)>0$ can be in general position.
\end{conj}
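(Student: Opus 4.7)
The plan is to reformulate the conjecture in terms of the affine group $\aff{1}{\F}$ acting on $\F$, and then exploit structural properties of approximate stabilizers via the asymmetric \bsg{} machinery combined with product theorems for the affine group, as indicated by the abstract.

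First, I would translate lines into group elements. A non-vertical line $\ell$ with slope $a$ and intercept $b$ is naturally identified with the affine map $g_\ell(x) = ax+b \in \aff{1}{\F}$, and the condition that $\ell$ is $\alpha$-rich in $Y\times Y$ becomes $|Y\cap g_\ell(Y)|\geq \alpha|Y|$. Given $L$ lines in general position, call the corresponding set of group elements $A = \{g_1,\dots,g_L\}\subseteq \aff{1}{\F}$; general position means the linear parts are distinct and no three elements of $A$ share a common fixed point. Summing the richness inequalities gives at least $\alpha L |Y|$ triples $(g,y,y')\in A\times Y\times Y$ with $g(y)=y'$, which is a lower bound on the bipartite ``action energy'' between $A$ and $Y$.

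Next, I would invoke the asymmetric \bsg{} theorem for group actions to produce refinements $A'\subseteq A$ and $Y'\subseteq Y$, of sizes $|A'|\gtrsim L/\mathrm{poly}(\alpha^{-1})$ and $|Y'|\gtrsim |Y|/\mathrm{poly}(\alpha^{-1})$, such that $|A'\cdot Y'|\lesssim_\alpha |Y'|$, i.e.\ $A'$ is an approximate stabilizer of $Y'$. A product theorem for the solvable group $\aff{1}{\F}$ should then force $A'$ to be concentrated in a coset of a proper subgroup. The proper (closed) subgroups are---up to conjugation---the translation subgroup, the dilation subgroup fixing a point, and the point-stabilizers; geometrically these are exactly the families of parallel lines, lines through the origin, and pencils of lines through a common point. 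General position excludes each of these except for a bounded number of lines, so concentration in a proper subgroup would bound $|A'|$, and hence $L$, by $C(\alpha)$.

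The main obstacle I anticipate is the gap between the approximate and exact structure. The product theorem yields only that $A'$ lies in a \emph{small neighbourhood} of a proper subgroup, not in the subgroup itself, so one must argue that an approximate-pencil, approximate-translation-family, or approximate-dilation-family still contains many lines in general position only if its size is bounded by $C(\alpha)$. Quantitatively, the losses in both asymmetric \bsg{} and the affine product theorem are polynomial in $\alpha^{-1}$ rather than constant, so even if the argument succeeds in principle, it naturally yields a bound of the form $L\lesssim \alpha^{-c}$ rather than $L\leq C(\alpha)$ with no dependence on $N$; in fact, this loss seems essential, since the very existence of ``approximate subgroup'' configurations is exactly what could supply counterexamples to the conjecture as stated, and is presumably what the paper exploits to disprove it.
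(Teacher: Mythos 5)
The statement you are asked to prove is, in fact, false, and the paper's central contribution is to disprove it. Your ``proof attempt'' cannot succeed for this reason, but your final paragraph correctly diagnoses exactly why: the quantitative losses are not an artifact of the method but reflect genuine counterexamples.

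To spell it out: the approach you sketch --- translate rich lines to elements of $\aff{1}{\F}$, apply the group-action asymmetric \bsg{} theorem to pass to a set with small tripling, then apply a product theorem for the solvable group $\aff{1}{\F}$ to find concentration in a coset of an abelian subgroup, then use general position to bound that concentration by $2$ --- is precisely the paper's argument for its \emph{upper bound} (Theorem~\ref{thm:4}, yielding Theorem~\ref{thm:5}). But as you observe, each stage loses factors that depend on $|Y|$, not just on $\alpha$; the paper's bound is of the shape $|L'|\gg(\alpha/2)^{C\,2^J}|Y|^{-C/J}|L|$, and optimizing over $J$ gives $RLGP(\F,N,\alpha)\ll_\alpha N^{C/\log\log N}$, which tends to infinity with $N$. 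One cannot remove the $N$-dependence, because your suspicion that ``the very existence of approximate subgroup configurations is exactly what could supply counterexamples'' is correct: Theorem~\ref{thm:20} exhibits grids over $\Q$ supporting $\gg(1-\alpha)\log N/\log\log N$ $\alpha$-rich lines in general position, so no $N$-independent bound $C(\alpha)$ exists. The counterexamples do not come from the \bsg{}/product-theorem machinery at all; they come from the \emph{amenability} of the finitely generated solvable group generated by the transformations. Any finite $S\subseteq\aff{1}{\R}$ generates an amenable group, so there is a F\o{}lner set $Y$ with $|Y\cap gY|\geq(1-\epsilon)|Y|$ for every $g\in S$ --- i.e.\ every line in $S$ is $(1-\epsilon)$-rich in $Y\times Y$ --- regardless of whether $S$ is in general position (Theorem~\ref{thm:444414}). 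Making this quantitative over $\Q$ is the explicit construction in Theorem~\ref{thm:28}; over $\F_p$ it is Klawe's non-expansion construction (Theorem~\ref{thm:15}); over $\C$ it is obtained by embedding the $\F_p$ example via Grosu's rectification theorem (Theorem~\ref{thm:24}). So the gap in your proposal is not a repairable technical step: you are attempting to prove a false statement, and the correct conclusion is the $N^{o(1)}$-type upper bound together with a matching-in-spirit super-constant lower bound from amenability.
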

In \cite{elekes2002versus}, Conjecture~\ref{conj:solymosi} is stated for lines defined over $\R$ or $\C$.
Solymosi's conjecture is supported by the sharpness examples for the \ST{} incidence bound, and also implies a plausible conjecture of Elekes \cite[Problem 3.9]{elekes2002versus}; see Section 9 of \cite{amirkhanyan2017lines} for a discussion.

Despite this evidence, Conjecture~\ref{conj:solymosi} is false: we disprove it with explicit examples over $\Q,\C,$ and $\F_p$, the finite field with prime cardinality $p$.
The examples we give are quite different from Cartesian products of arithmetic (or geometric) progressions, which show that the \ST{} incidence bound is sharp and motivate the sum-product conjecture.

Let $RLGP(\F, N,\alpha)$ denote the maximum over all $Y\subseteq\F$ with $|Y|=N$ of the maximum number of lines that are $\alpha$-rich in  $Y\times Y$ and in general position.
Explicitly, if $RLGP(Y,\alpha)$ is the maximum number of $\alpha$-rich lines in $Y\times Y$ that are in general position, then
\[
RLGP(\F,N,\alpha):= \max_{Y\subseteq\F, |Y|=N} RLGP(Y,\alpha).
\]
Conjecture~\ref{conj:solymosi} posits that for $\F=\R$ (or $\F=\C$) and for all $0<\alpha<1$, there is a constant $C(\alpha)>0$ depending on $\alpha$ such that
\[
RLGP(\F,N,\alpha)\leq C(\alpha).
\]

For $\F=\Q$, we prove a lower bound for $RLGP(\Q,N,\alpha)$ that is nearly logarithmic in $N$.
In particular, this disproves Conjecture~\ref{conj:solymosi} for $\F=\R$ and $\F=\C$.
\begin{thm}
  \label{thm:20}
There is an absolute constant $C>0$ such that for any $0<\alpha<1$
\[
RLGP(\Q,N,\alpha) \geq C (1-\alpha) \frac{\log N}{\log\log N}.
\]
\end{thm}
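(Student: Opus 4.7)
My plan is to build, for each integer $k$, a finite set $Y \subseteq \Q$ together with $k$ affine maps $g_1,\dots,g_k \in \aff{1}{\Q}$ whose graphs are in general position (pairwise distinct slopes and no three concurrent) and each $\alpha$-rich in $Y\times Y$. Observing that a line $y=g(x)$ is $\alpha$-rich in $Y\times Y$ iff $|g(Y)\cap Y|\geq \alpha|Y|$, this is equivalent to finding a finite set $Y$ simultaneously approximately invariant under the $g_i$. Inverting the resulting relation between $|Y|$ and $k$ will give the stated lower bound on $RLGP(\Q,N,\alpha)$.

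The tool for producing such a $Y$ is the amenability of the discrete solvable group $\aff{1}{\Q} = \Q^*\ltimes\Q$: by the F{\o}lner criterion, for any finite $S\subseteq\aff{1}{\Q}$ and any $\epsilon>0$ there is a finite $(S,\epsilon)$-invariant set in $\Q$. What the theorem needs is the \emph{quantitative} F{\o}lner function; a bound of the form $|Y|\leq \exp\!\bigl(Ck\log(k/\epsilon)\bigr)$ with $\epsilon\asymp 1-\alpha$ inverts to $k\gtrsim(1-\alpha)\log N/\log\log N$, matching the statement.

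For an explicit construction I would take $Y$ to be a $q$-ary digit box
\[
Y = \Bigl\{\sum_{j=0}^{k-1} c_j q^j : 0\leq c_j < m\Bigr\}, \qquad m = \lceil 1/(1-\alpha)\rceil,
\]
with $q$ a prime much larger than $m$, so $|Y|=m^k$. The $k$ translations $T_j(x) = x + q^j$ each satisfy $|T_j(Y)\cap Y| \geq (1-1/m)|Y| \geq \alpha|Y|$, because they merely increment the $j$-th digit without carry. These give $k$ rich lines, but all of slope $1$, so they are parallel rather than in general position. To correct this, I would embed the translations in the solvable subgroup $\Gamma=\langle T,D\rangle$ with $D(x) = qx$, and replace the $T_j$ by suitable products $D^{r_j}T_{b_j}\in\Gamma$ with distinct slope exponents $r_j$ and generic translation parts $b_j$. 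Correspondingly, $Y$ must be enlarged to a F{\o}lner set for the (now non-abelian) finite collection $\{D^{r_j}T_{b_j}\}$, e.g.\ to a union of a bounded number of dilates $\bigcup_s q^s Y$ chosen so that the dilations themselves are approximately preserved.

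The main obstacle I expect is this general-position constraint. Commuting affine maps with distinct slopes necessarily share a common fixed point, so their graphs are concurrent; producing $k$ lines in general position therefore forces a genuinely non-abelian subgroup of $\aff{1}{\Q}$, whose F{\o}lner function is strictly worse than that of an abelian translation group. A purely abelian (translation-only) construction gives $k\sim\log N$ parallel rich lines; the extra $\log\log N$ in the denominator of the stated bound is, I believe, the logarithmic overhead of introducing just enough slope variation (via the dilation $D$) to achieve distinct slopes and generic translation offsets to achieve non-concurrency, while keeping the F{\o}lner set only polynomially larger.
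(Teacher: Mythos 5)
Your overall approach matches the paper's: you correctly reduce the problem to finding a set $Y\subseteq\Q$ that is approximately invariant under $k$ affine maps in general position, invoke amenability of $\aff 1\Q$ as the guiding principle, and propose an explicit F\o{}lner-type set built from dilates together with dilation-translation transformations $x\mapsto q^{r_j}x+c_j$. The paper does exactly this: it takes $Y=\bigcup_{k=0}^{N-1}N^k\bigl(N^N+[0,N^N)\cap\Z\bigr)$ (a union of dilated intervals; this is the cleaner analogue of your digit box) and the transformations $\ell_k(x)=N^kx+kN^{N-1}$ for $0<k<\epsilon N$, with $2\epsilon\leq 1-\alpha$. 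Your heuristic F\o{}lner function $\log|Y|\approx k\log(k/\epsilon)$ is also the right shape and inverts to the claimed bound.

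The genuine gap is precisely the one you flag as the ``main obstacle'' and then leave unresolved: the general-position verification. Saying ``generic translation parts $b_j$'' does not close the argument, because the intercepts are not free to vary. For a dilate $N^k\cdot I$ to be mapped back into the union of dilates, the intercept of $\ell_j$ must be divisible by $N^{j+k}$ for every relevant scale $k$, forcing $c_j$ to be a multiple of $N^{N-1}$, and richness forces $c_j\ll\epsilon N^N$. This leaves only about $\epsilon N$ admissible intercepts per slope, while the number of concurrency constraints is $\binom{\epsilon N}{3}\approx(\epsilon N)^3$, so a naive greedy ``avoid all bad intercepts'' count does not obviously succeed, and ``generic'' over a discrete range of size $\epsilon N$ is not a substitute for a construction. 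The paper sidesteps this by making the structured choice $c_k=kN^{N-1}$ (intercept coefficient equal to the slope exponent) and proving non-concurrency directly: for $0<i<j<k<\epsilon N$ the relevant $3\times 3$ determinant equals $(k-i)N^j-(j-i)N^k-(k-j)N^i$, which is strictly negative because the $N^k$ term dominates once $k<\epsilon N$ and $N$ is large. To complete your proof you need either this explicit determinant computation (or an equivalent one adapted to your $q$-ary parametrization), or a counting argument that actually beats the $\binom{k}{3}$ concurrency conditions within the constrained intercept range; neither is supplied in the proposal.
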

For $\F=\C$ and $\F=\F_p$, we prove upper and lower bounds for $RLGP(\F,N,\alpha)$ whose logarithms differ by a square root.
\begin{thm}
  \label{thm:21}
Let $\F$ denote $\C$ or $\F_p$.
For every $0<\alpha<1$, there is a constant $C_\alpha>0$ such that
\[
\frac 1{C_\alpha}\sqrt{\frac{\log N}{\log\log N}} \leq \log \, RLGP(\F,N,\alpha) \leq C_\alpha\frac{\log N}{\log\log N}.
\]
If $\F=\F_p$, the upper bound holds only if $N^{1+\log(2/\alpha)/\log\log N}\leq p$.
\end{thm}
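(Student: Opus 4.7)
The plan is to translate the problem into the language of subsets of the affine group $\aff{1}{\F}$. Identify each rich line $\ell:\,y=ax+b$ with the affine map $\phi_\ell(x)=ax+b$; then $\ell$ is $\alpha$-rich in $Y\times Y$ iff $|\phi_\ell(Y)\cap Y|\geq\alpha N$, so the set $S=\{\phi_\ell:\ell\mbox{ rich, general position}\}$ is an approximate stabilizer of $Y$. The two general position conditions precisely exclude the two families of proper algebraic subgroups of $\aff{1}{\F}$: distinct slopes forbid $S^{-1}S$ from lying in the translation subgroup, and ``no three concurrent lines'' forbids $S^{-1}S$ from lying in a point stabilizer (a one-parameter subgroup). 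In short, general position is exactly the non-subgroup hypothesis of product theorems in $\aff{1}{\F}$.

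\textbf{Upper bound.} First, apply the asymmetric \bsg{} theorem for group actions (developed earlier in the paper) to extract $S'\subseteq S$ with $|S'|\geq|S|/\mathrm{poly}(\alpha^{-1})$ and $|S'^{-1}S'|\leq K|S'|$ for $K=\mathrm{poly}(|S|,\alpha^{-1})$. Second, invoke the product theorem for $\aff{1}{\F}$ — Elekes-type algebraic bounds over $\C$, and Helfgott/Breuillard-Green-Tao over $\F_p$ — which by general position applies to $T:=S'^{-1}S'$, yielding $|T^3|\geq|T|^{1+\delta}$ for some $\delta=\delta(\alpha)>0$, as long as $|T|$ is below the relevant ambient threshold. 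Iterating $k$ times gives $|T^{3^k}|\geq|T|^{(1+\delta)^k}$. In the opposite direction, each element of $T$ is an approximate stabilizer of $Y$, so an orbit-stabilizer argument — using that any affine map is determined by its action on two points — bounds $|T^{3^k}|$ by roughly $(N/\alpha^{O(3^k)})^2$. Equating the two and optimizing $k\sim\log\log N$ gives $\log|S|\leq O_\alpha(\log N/\log\log N)$. The hypothesis $N^{1+\log(2/\alpha)/\log\log N}\leq p$ in the finite-field case is exactly what is needed to keep the iterates inside the regime where the product theorem has a uniform effective exponent.

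\textbf{Lower bound.} Construct $Y$ and $S$ from amenable subgroups of $\aff{1}{\F}$. Over $\C$, take $G=\langle x\mapsto qx,\, x\mapsto x+1\rangle\leq\aff{1}{\C}$ for some integer $q\geq 2$, a solvable (hence amenable) group of exponential growth. Choose Følner sets $F\subset E\subset G$ with $|F|=M$ and $|FE|\leq(1+\epsilon)|E|$, and a generic point $y_0\in\C$; set $Y=E\cdot y_0$, so $N\leq|E|$. For every $\phi\in F$, the image $\phi(Y)$ meets $Y$ in $\geq(1-\epsilon)N$ points, producing $M$ rich affine maps, and for generic $y_0$ the corresponding lines are in general position because the obstructions form a thin algebraic set. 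The Følner function of such solvable subgroups grows like $\exp(\Theta(M^2))$ up to logarithmic factors, which translates into $\log M\sim\sqrt{\log N/\log\log N}$. The $\F_p$ case follows the same blueprint using small-index solvable subgroups of $\aff{1}{\F_p}$ (or, more robustly, Bourgain-Gamburd-type expanding generating sets in its place), where the role of Følner sequences is played by a sequence of sets of small tripling.

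\textbf{Main obstacle.} The hardest step is the upper bound: carefully tracking the dependence of $K$, $\delta$, and $\alpha$ across \bsg{}, the product theorem, and the iteration, and verifying that general position uniformly excludes \emph{both} subgroup cases of the product theorem at every step. The lower bound is technically demanding but essentially a matter of selecting a suitable amenable subgroup and matching constants in its Følner function to achieve the exponent $\sqrt{\log N/\log\log N}$.
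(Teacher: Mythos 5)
Your high-level framing (rich lines $\leftrightarrow$ approximate symmetries of $Y$ under $\aff 1\F$; upper bound via \bsg{} + product theorems; lower bound via amenability/F\o{}lner sets) correctly identifies the framework of the paper, but both halves of your plan have genuine gaps.

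\textbf{Upper bound.} You apply \bsg{} once to extract $T=S'^{-1}S'$ with $|T^3|\leq K|T|$ for some $K$, then propose to iterate the \emph{product theorem} to get $|T^{3^k}|\geq |T|^{(1+\delta)^k}$. This inverts the structure of the paper's argument, and the iteration as described does not go through. Product theorems for $\aff 1\F$ are \emph{dichotomies}: if $|T^3|\leq K|T|$ then $T$ is largely covered by cosets of abelian subgroups; the growth form $|T^3|\gg|T|^{1+\delta}$ only applies when one can rule out that coset structure, and after one application you are already holding the hypothesis $|T^3|\leq K|T|$, so there is nothing left to gain by iterating on $T$. Moreover, general position of the original $S$ does \emph{not} transfer to $T=S'^{-1}S'$, let alone to $T^{3^j}$, so the nondegeneracy hypothesis you need at each stage is not certified. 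The paper instead iterates \emph{inside} the \bsg{} machinery: Theorem~\ref{thm:1} is a $J$-fold iteration of approximate multiplicative closure of symmetry sets combined with pigeonholing, which produces a single set $A_*$ with tripling constant $K\approx \left(\alpha/2\right)^{-O(1)}|Y|^{1/J}$; one then applies the product theorem \emph{once}, lands in an abelian coset, and uses general position only at the very end to force $|L'|\leq 2$. The choice $J\approx \log_2(\log_2 N/\log_2\log_2 N)$, not an iterated product theorem, is what produces the exponent $\log N/\log\log N$.

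\textbf{Lower bound.} Your proposed construction over $\C$ — F\o{}lner sets in a two-generator solvable subgroup of $\aff 1\C$ such as $\langle x\mapsto qx,\, x\mapsto x+1\rangle$ — cannot reach the square-root exponent. That construction is essentially the paper's Theorem~\ref{thm:28}, which only gives $|L|\gg \log N/\log\log N$ rich lines, i.e.\ $\log RLGP \gg \log\log N$, far weaker than $\sqrt{\log N/\log\log N}$. The obstruction is structural: in characteristic zero, composing many maps $x\mapsto q^k x$ forces $|Y|$ to blow up multiplicatively, so the F\o{}lner function of any finitely generated subgroup of $\aff 1\Q$ does not give more than roughly $\log N/\log\log N$ approximate symmetries. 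The paper reaches the square-root exponent only over $\F_p$, via Klawe's explicit construction (Theorem~\ref{thm:16}), which crucially exploits \emph{wrap-around} modulo $p$: one can use $\sim s^k$ ``slopes'' that are $Q$-powers for a set $Q$ of $k\sim \log q/\log\log q$ primes without the grid size exploding, yielding $\log|S|\approx k$ while $\log|Y|\approx k^2\log k$. The complex case (Theorem~\ref{thm:24}) is then obtained by transporting this $\F_p$ example into $\C$ using Grosu's rectification theorem (Theorem~\ref{thm:25}), carefully encoding general position via the nonvanishing of $3\times 3$ determinants so the embedding preserves it. Your sketch omits both the number-theoretic Klawe construction (without which the exponent is wrong) and the $\F_p\to\C$ rectification step (without which there is no $\C$ statement at all). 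Finally, over $\F_p$ your reference to ``Bourgain--Gamburd-type expanding generating sets'' points in the wrong direction: the lower bound comes from proving that $\aff 1{\F_p}\actson\F_p$ does \emph{not} support bounded-degree expanders, which is the content of the Lubotzky--Weiss/Klawe non-expansion result, the opposite of Bourgain--Gamburd.
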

The upper bound in Theorem~\ref{thm:21} 
applies when $\F=\R$, since we may consider points and lines defined over $\R$ to be contained in $\C^2$.

The upper bound in Theorem~\ref{thm:21} is a special case of the following general structure theorem for rich lines in grids over $\C$ and $\F_p$.
\begin{thm}
  \label{thm:4}
There is an absolute constant $C>0$ such that the following holds.
Let $Y$ be a finite subset of $\F$ and let $L$ be a set of $\alpha$-rich lines in $Y\times Y$.
Let $J>0$ be an integer such that $(\alpha/2)^{2^J}\geq 1/|Y|$.

If $\F=\C$, then there is a subset $L'\subseteq L$ such that
\begin{enumerate}
\item the lines of $L'$ are either parallel or concurrent, and
\item $|L'|\gg \left( \frac\alpha2 \right)^{C\, 2^J} |Y|^{-C/J}|L|$.
\end{enumerate}
If $\F=\F_p$, then the same conclusion holds, provided that $|Y| \leq (\alpha/2)^{2^J}p$.
\end{thm}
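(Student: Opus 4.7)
The plan is to translate the rich-lines problem into a question about approximate subgroups of $\aff{1}{\F}$ and then invoke a product theorem to force a coset structure. After separating axis-parallel rich lines (which are automatically parallel and can be set aside), every remaining line $\ell : y = a_\ell x + b_\ell$ corresponds to the affine transformation $g_\ell(x) = a_\ell x + b_\ell$, and the $\alpha$-richness condition is exactly $|g_\ell(Y) \cap Y| \geq \alpha|Y|$. Thus the set $G := \{g_\ell : \ell \in L\} \subseteq \aff{1}{\F}$ has large action energy against $Y$, which is precisely the hypothesis needed to apply the asymmetric \bsg{} theorem for group actions advertised in the abstract. That theorem produces a subset $G_1 \subseteq G$ of size $\gtrsim \alpha^{O(1)}|G|$ whose tripling $|G_1 G_1 G_1| \leq K|G_1|$ is controlled by a quantity $K$ polynomial in $\alpha^{-1}$.

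I would then apply a product theorem for $\aff{1}{\F}$ --- Breuillard--Green--Tao over $\C$ and Helfgott/Gill over $\F_p$ --- which forces a large subset of $G_1$ to lie in a coset of a proper subgroup, provided the ``fills the whole group'' alternative is ruled out. Over $\F_p$, this alternative is excluded by the hypothesis $|Y| \leq (\alpha/2)^{2^J} p$, which caps $|G_1|$ well below the threshold at which the product theorem becomes vacuous. Geometrically, the proper subgroups of $\aff{1}{\F}$ are, up to conjugation, the translation subgroup $\{x \mapsto x + c\}$ and the point-stabilizer subgroups $\{x \mapsto ax + (1-a)c\}$ fixing some $c \in \F$: a coset of the former is exactly a family of lines with a common slope (parallel), while a coset of the latter is exactly a family of lines through a common point (concurrent). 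Either way one extracts an $L' \subseteq L$ of the required form, and this already suffices to prove a qualitatively correct but quantitatively weak version of the theorem.

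To obtain the double-exponential dependence $(\alpha/2)^{C \cdot 2^J}$ in the full statement I would iterate the \bsg{}-plus-product-theorem loop $J$ times: each pass improves the approximate-group quality at the cost of squaring the loss in the surviving fraction of $L$, so $J$ iterations compound to a tower of height $J$, accounting for the $(\alpha/2)^{C \cdot 2^J}$ factor; the polynomial Helfgott-style loss built into each application of the product theorem accumulates to the $|Y|^{-C/J}$ factor. The threshold condition $(\alpha/2)^{2^J} \geq 1/|Y|$ is exactly what ensures the surviving fraction has not yet fallen below $1/|L|$ after $J$ iterations, so the iteration terminates meaningfully. The main obstacle is the bookkeeping: simultaneously tracking the tripling constant, the surviving fraction of lines, and the coset-quality output by the product theorem across $J$ iterations so that the final $L'$ still satisfies both conclusions of the theorem --- and, over $\F_p$, ensuring that the cardinality condition $|Y| \leq (\alpha/2)^{2^J} p$ remains compatible with the escape-from-subvarieties step of Helfgott's argument at every stage.
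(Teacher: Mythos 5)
Your high-level plan is the right one and matches the paper in outline: translate $\alpha$-rich lines into a subset of $\aff 1\F$ lying in the symmetry set $\sym_\alpha(Y)$, apply a group-action version of the asymmetric \bsg{} theorem to extract a set of small tripling, apply a product theorem (Breuillard--Green over $\C$, Helfgott over $\F_p$) to get a coset of a maximal abelian subgroup, and read off ``parallel or concurrent'' from whether that subgroup is the translation subgroup $U$ or a point-stabilizer. Your identification of the relevant subgroups is correct. However, there are two genuine gaps in how you propose to get the quantitative statement.

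First, the iteration structure is misconceived. You propose to ``iterate the \bsg{}-plus-product-theorem loop $J$ times,'' but once a single application hands you a subset of $L$ consisting of parallel or concurrent lines, there is nothing left to iterate on. In the paper's argument the $J$-fold iteration lives entirely \emph{inside} the group-action \bsg{} theorem: one repeatedly applies the approximate-multiplicative-closure lemma to build a chain of partial product sets $A_0, A_1, \dots, A_J$, each contained in a successively weaker symmetry set $\sym_{\alpha_j}(Y)$ with $\alpha_{j+1}=\alpha_j^2/2$ (this is where your correct ``squaring'' intuition, and hence the $(\alpha/2)^{2^J}$ factor, comes from), and then uses a pigeonhole over the $J$ ratios $|A_{j+1}|/|A_j|$ to locate a single step at which the partial product set has small doubling $\leq K := (|\sym_{\alpha_J}(Y)|/|A|)^{1/J}$. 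The classical \bsg{} theorem and the product theorem are applied \emph{once}, at that step, and then a density-transfer inequality is iterated back down the chain to pull the coset structure into $A_0$. Without the pigeonhole step, the tripling constant $K$ you would get from a one-shot \bsg{} applied to $A$ is uncontrolled when $|A|\ll|Y|$ (which is the asymmetric regime the theorem is designed for), and the ``$K$ polynomial in $\alpha^{-1}$'' you assume simply is not available.

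Second, you never invoke an upper bound on $|\sym_\alpha(Y)|$, and this is a load-bearing ingredient. The $|Y|^{-C/J}$ factor does not come from ``Helfgott-style losses accumulating across $J$ product-theorem applications''; it comes from the single bound $K \leq |\sym_{\alpha_J}(Y)|^{1/J} \ll (\alpha/2)^{-C}|Y|^{1/J}$, which requires knowing $|\sym_{\alpha_J}(Y)|\ll \alpha_J^{-O(1)}|Y|$ (a \ST{}-type estimate over $\C$, a Stevens--de Zeeuw-type estimate over $\F_p$). The same bound is needed a second time in the $\F_p$ case to rule out the alternative $K^C|A_*|\gg |\pi(A_*)|p$ in Helfgott's product theorem: one bounds $|A_*|\leq|\sym_{\alpha_J}(Y)|\ll\alpha_J^{-3}p$ using the hypothesis $|Y|\leq(\alpha/2)^{2^J}p$. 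You gesture at this last point (``caps $|G_1|$ well below the threshold'') but without the explicit symmetry-set bound the argument does not close. Relatedly, the hypothesis $(\alpha/2)^{2^J}\geq 1/|Y|$ is not about the surviving fraction of $L$ dropping below $1/|L|$; it is the condition $\alpha_J\geq 2/|Y|$ required for the symmetry-set upper bound to be non-trivial.
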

The key point is that by taking $J$ sufficiently large, the factor $|Y|^{-C/J}$ becomes negligible.
Theorem~\ref{thm:4} is a consequence of a version of the asymmetric \bsg{} theorem for \emph{group actions}, proved in \cite{murphy2017group}, combined with a \emph{product theorem}\/ for the affine group.

The lower bounds in Theorems~\ref{thm:20} and \ref{thm:21} follow from explicit constructions; see Theorems~\ref{thm:28}, \ref{thm:15}, and \ref{thm:24} in Section~\ref{sec:lower-bounds-rich}.
If Conjecture~\ref{conj:solymosi} \emph{were}\/ true over $\R$, then subgroups of $\aff 1\R$ generated by a finite set of affine transformations in general position would not be \emph{amenable}, however finitely generated solvable groups are amenable; Theorem~\ref{thm:28} proves this quantitatively.
Heuristically, if Conjecture~\ref{conj:solymosi} were true over $\F_p$, it might be possible to make an expanding family of Schreier graphs for $\aff 1{\F_p}\actson\F_p$ with bounded degree (following a similar strategy to Bourgain and Gamburd \cite{bourgain2008uniform}), however this is known to be false by a theorem of Lubotzky and Weiss \cite{lubotzky1993groups,lubotzky1994discrete}.
To prove the lower bound in Theorem~\ref{thm:21}, we use a construction of Klawe \cite{klawe1981non-existence,klawe1984limitations}, which gives a quantitative proof of Lubotzky and Weiss' theorem for $\aff 1{\F_p}\actson\F_p$; using a theorem of Grosu~\cite{grosu2014locally}, we embed our counter-example into $\C^2$.

In Section~\ref{sec:lower-bounds-rich} we construct examples of grids that support many $\alpha$-rich lines, and in Section~\ref{sec:upper-bounds-rich} we prove upper bounds the number of $\alpha$-rich lines supported by a $N\times N$ grid.
These sections are completely independent.
The remainder of the introduction contains background on rich lines in grids and some positive results towards Conjecture~\ref{conj:solymosi}, as well as an explanation of the connection between rich lines and grids and sum-product problems.

For completeness, we sketch the proof of the group action version of the \bsg{} theorem and prove the necessary product theorems for affine transformations in Appendices~\ref{sec:proof-group-action-bsg} and \ref{sec:prod-theor-affine}.
In particular, Appendix~\ref{sec:proof-group-action-bsg} gives a proof of Elekes' Theorem~\ref{thm:22} and compares it with the proof of Theorem~\ref{thm:4}.

\subsection{Background on rich lines in grids}

As mentioned, the \ST{} theorem \cite{szemeredi1983extremal} implies that $O(\alpha^{-3}N)$ lines may be $\alpha$-rich in a $N\times N$ grid in $\R^2$.
This lower bound is attained by two simple examples, up to factors of $\alpha$.
\begin{enumerate}
\item If $Y=\{1,\ldots, N\}$, then the parallel lines $\ell(x)=x+b$ are $\alpha$-rich for $b\ll (1-\alpha)N$, thus $Y\times Y$ supports roughly $N$ parallel $\alpha$-rich lines. 
\item If $Y=\{1,2,\ldots,2^{N-1}\}$, then the lines $\ell(x)=2^jx$ through the origin are $\alpha$-rich for $j\ll (1-\alpha)N$, thus $Y\times Y$ supports roughly $N$ concurrent $\alpha$-rich lines.
\end{enumerate}
A more elaborate example, due to Erd\H{o}s, achieves the correct power of $\alpha$.
\begin{example}
  Let $N$ be a large positive integer, let $Y=\{n\in\Z\colon |n|\leq N\}$ and let $P=Y\times Y$.
For coprime integers $a<b$, define a set of lines
\[
L_{a,b}=\{ y-j = \frac ab (x-i)\colon 1\leq i\leq b, 1\leq j\leq \frac N2\}.
\]
Each line in $L_{a,b}$ is incident to at least $\frac N{2b}-1$ points of $P$, and thus is $\alpha$-rich in $P$ for $b\leq\floor{\frac 1{3\alpha}}$.
On the other hand, the number of such lines is $\Theta(\alpha^{-3}N)$.
See \cite{sheffer2014incidences:} for details.
\end{example}

The following theorem of Elekes~\cite{elekes1997linear,elekes2002versus} says that combinations of examples (1) and (2) are essentially the only possibilities.
\begin{thm}[Elekes]
  \label{thm:22}
Let $0<\alpha\leq 1$ be a constant.
If $N$ lines are $\alpha$-rich in an $N\times N$ grid in $\R^2$, then either
\begin{enumerate}
\item $C\alpha^C N$ lines are parallel, or
\item $C\alpha^C N$ lines are concurrent (incident to a common point),
\end{enumerate}
where $C>0$ is a constant independent of $\alpha$ and $N$.
\end{thm}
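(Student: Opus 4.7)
The natural setup identifies each line $\ell\colon y=ax+b$ with the affine transformation $g_\ell(t)=at+b$ in $\aff{1}{\R}$. A point $(x,y)$ lies on $\ell$ iff $g_\ell(x)=y$, so a line is $\alpha$-rich in $Y\times Y$ iff $|Y\cap g_\ell^{-1}(Y)|\geq \alpha|Y|$. Under this identification, two lines are \emph{parallel} iff the corresponding affine maps lie in a common coset of the translation subgroup $T\subset\aff{1}{\R}$, and two lines are \emph{concurrent} at a point $p\in\R$ iff the corresponding affine maps lie in a common coset of $\stab(p)$, which is a conjugate of the multiplicative subgroup $M$. So the goal becomes: given $N$ affine maps acting richly on $Y$, produce a coset of $T$ or of some $\stab(p)$ containing $\gg\alpha^C N$ of them.

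My first step would be to lower-bound a group-action energy. Let $A=\{g_\ell:\ell\in L\}$; richness gives $\sum_{g\in A}|Y\cap g^{-1}Y|\geq \alpha N^2$, and Cauchy--Schwarz over $z\in Y$ (applied to $h(z) = |\{g\in A: g^{-1}(z)\in Y\}|$) yields at least $\alpha^2 N^3$ quadruples $(g_1,g_2,y_1,y_2)\in A^2\times Y^2$ with $g_1(y_1)=g_2(y_2)\in Y$. This is precisely the input to the asymmetric \bsg{} theorem for group actions from \cite{murphy2017group}: it would extract a dense subset $A'\subseteq A$ with $|A'|\gg \alpha^{C_1}N$ such that $|A'(A')^{-1}|\ll \alpha^{-C_2}N$, i.e., $A'$ has small doubling in $\aff{1}{\R}$.

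The second step is a product theorem for $\aff{1}{\R}$. Using the semidirect decomposition $\aff{1}{\R} \cong T\rtimes M$, I would project $A'(A')^{-1}$ onto $M$ (the slopes) and analyze the fibers in $T$. Pl\"unnecke--Ruzsa implies that both the projection and a typical fiber are sets of small doubling in abelian groups, and Freiman's theorem over $\R$ forces one of them to concentrate on a (generalized) arithmetic- or geometric-progression structure; pigeonholing then translates this back into concentration of $A'$ on a single coset of $T$ (many parallel lines) or of a single $\stab(p)$ (many concurrent lines), at a loss of polynomial factors in $\alpha$.

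The main obstacle I anticipate is the product-theorem step together with the bookkeeping across the chain. The semidirect structure means one must handle the ``mostly translations'' regime (projection to $M$ is trivial, so $A'$ is essentially in a coset of $T$) separately from the ``mostly non-translations'' regime; in the latter, structure of the projection to $M$ must be upgraded to structure in $\aff{1}{\R}$ itself, so that the affine maps concentrate on a single \emph{stabilizer} coset rather than merely share a slope. Keeping the dependence on $\alpha$ polynomial throughout is what makes the \emph{asymmetric} group-action \bsg{} variant essential: Elekes' original argument bypassed this via a direct appeal to Szemer\'edi--Trotter, whereas the present reformulation trades planar incidence geometry for group-theoretic structure, which is also what will enable the generalization to $\C$ and $\F_p$ in Theorem~\ref{thm:4}.
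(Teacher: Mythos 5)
Your skeleton---translate to $\aff{1}{\R}$, lower-bound an energy, apply a \bsg-type theorem to extract a small-doubling subset, then invoke a product theorem for the affine group---is the route the paper takes (Appendix~\ref{sec:proof-group-action-bsg}, Theorem~\ref{thm:8x}). But there are two substantive gaps and one misapprehension worth flagging.

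First, energy alone is not enough input for \bsg. You correctly derive $\sum_z h(z)^2\geq\alpha^2 N^3$, which gives a dense relation $E\subseteq A^{-1}\times A$, but Theorem~\ref{thm:8} (and the group-action version) also require an upper bound on the partial product set $|A^{-1}\pp E A|$. In the paper this comes from Proposition~\ref{prop:300}, which places $A^{-1}\pp E A$ inside $\sym_{\alpha^2/2}(Y)$, \emph{combined with} Theorem~\ref{thm:7}, the Szemer\'edi--Trotter bound $|\sym_{\alpha^2/2}(Y)|\ll\alpha^{-6}|Y|$ (here $|Y|=N$). Without that incidence input you cannot conclude $|A'(A')^{-1}|\ll\alpha^{-C}N$; the claim that the reformulation ``trades planar incidence geometry for group-theoretic structure'' is therefore not accurate---\ST{} is still doing essential work, once in bounding $|\sym_\alpha(Y)|$ and again inside the product theorem for $\aff{1}{\C}$ (Proposition~\ref{prop:12}).

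Second, the Freiman-based product theorem step does not work as sketched. Small doubling of $A'$ in $\aff{1}{\R}$, followed by Pl\"unnecke--Ruzsa on the projection to $M$ and on the fibers, does give you small-doubling subsets of $\R^\times$ and $\R$, and Freiman describes each as a dense piece of a GAP. But a GAP in the slope direction together with GAPs in the fibers is \emph{not} by itself incompatible with large doubling, and conversely does not force concentration on a single coset of the translation subgroup or of a stabilizer; the obstruction is precisely the interaction between the multiplicative structure of the slopes and the additive structure of the intercepts, which is a sum--product phenomenon. The paper's product theorem (Theorem~\ref{thm:12}, proved in Appendix~\ref{sec:prod-theor-affine}) instead uses the orbit--stabilizer theorem for sets together with the estimate $|B+BC|\gg\sqrt{|B||B||C|}$ (another consequence of \ST); that argument shows small tripling already forces concentration on an abelian-subgroup coset, without any Freiman input. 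Elekes' own use of Freiman in \cite{elekes1998linear,elekes2002versus} is to further describe the parallel/concurrent family after concentration has been established, not to establish it.

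Finally, for the theorem as stated (with $|L|=N=|Y|$) the \emph{symmetric} \bsg{} theorem (Theorem~\ref{thm:8}) is what the paper uses; the asymmetric group-action version (Theorem~\ref{thm:1}) is only needed once $|L|$ is allowed to be much smaller than $|Y|$, as in Theorem~\ref{thm:4}. So the asymmetric machinery is not what keeps the $\alpha$-dependence polynomial here---Proposition~\ref{prop:300} plus the linear-in-$N$ symmetry-set bound already handle that.
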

By applying Freiman's theorem, Elekes concludes that the family of parallel lines obtained in Theorem~\ref{thm:22} have $y$-intercepts in a generalized arithmetic progression (similarly, if the lines are concurrent, then their slopes are in a generalized geometric progression) \cite{elekes1998linear,elekes2002versus}.

% ELEKES PROOF
Elekes reduces the proof of Theorem~\ref{thm:22} to a \emph{product theorem}.
If  $A$ and $B$ are finite sets of real affine transformations, then we define their \emph{composition set}\/ by
\[
A\circ B := \{\ell_a\circ\ell_b\colon \ell_a\in A,\ell_b\in B\}.
\]
The collection of affine transformations is a group with product given by composition of functions, so $A\circ B$ is the just the product set of $A$ and $B$.
\begin{thm}[Elekes {\cite[Theorem 1]{elekes1997linear}}]
  \label{thm:6}
For every $K>0$ there is a constant $\rho=\rho(K)>0$ depending on $K$ with the following property.

Suppose $A,B$ are finite sets of real affine transformations with $|A|,|B|\geq N$ and
\[
|A\circ B|\leq KN
\]
Then there exist subsets $A'\subseteq A$ and $B'\subseteq B$ with $|A'|,|B'|\geq \rho N$ such that either
\begin{enumerate}
\item both $A'$ and $B'$ consist of parallel lines, or
\item both $A'$ and $B'$ consist of concurrent lines.
\end{enumerate}
\end{thm}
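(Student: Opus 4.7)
The plan is to convert the small composition-set hypothesis $|A\circ B|\leq KN$ into a small-doubling statement about subsets of $A$ and $B$, and then to exploit the restricted structure of approximate subgroups of the two-dimensional solvable group $\aff 1\R=\R\rtimes\R^*$; its only one-dimensional subgroups are the normal translation subgroup $\trans$ and the point-stabilizers $\stab(p)$ for $p\in\R$, corresponding respectively to parallel and concurrent families of lines. By Cauchy--Schwarz the collision energy
\[
E(A,B):=\bigl|\{(a_1,b_1,a_2,b_2)\in A^2\times B^2 : \ell_{a_1}\circ\ell_{b_1}=\ell_{a_2}\circ\ell_{b_2}\}\bigr|
\]
satisfies $E(A,B)\geq (|A||B|)^2/|A\circ B|\geq N^3/K$. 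A non-abelian \bsg{} argument---such as the group-action version proved in \cite{murphy2017group}---then produces refined subsets $A'\subseteq A$ and $B'\subseteq B$, each of size $\gg_K N$, with $|A'\circ B'|\ll_K N$; after mild enlargement $A'$ and $B'$ behave as $K^{O(1)}$-approximate subgroups of $\aff 1\R$.

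Next, I would analyze $A'$ and $B'$ through the slope homomorphism $\pi:\aff 1\R\to\R^*$, $\pi(\ell)=\alpha_\ell$. Because $\pi$ is a homomorphism, $|\pi(A')\cdot\pi(B')|\leq|A'\circ B'|\ll_K N$, so the slopes have a small product set in the abelian group $\R^*$. If $|\pi(A')|\cdot|\pi(B')|$ is much smaller than $|A'|\cdot|B'|$, then pigeonhole forces a single slope to be shared by many elements of each of $A'$ and $B'$, yielding the desired parallel subfamilies. Otherwise $\pi$ is essentially injective on $A'\cup B'$, and $\pi(A')$ has small multiplicative doubling; by Freiman--Ruzsa in $\R^*$ combined with Pl\"{u}nnecke--Ruzsa inequalities transferred to $\aff 1\R$, a large refinement of $A'$ is concentrated on a single stabilizer coset $\ell_0\circ\stab(p_0)$, and likewise for $B'$. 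The parallel/concurrent dichotomy must then agree between $A'$ and $B'$, because composing a large parallel family with a large stabilizer coset (or composing two stabilizer cosets with distinct fixed points) produces a product set of size $\Theta(N^2)$, contradicting $|A'\circ B'|\ll_K N$.

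The main obstacle is the concurrent case: showing that a small-doubling subset of $\aff 1\R$ on which the slope map is essentially injective must concentrate on a single stabilizer coset. This is precisely a product theorem for $\aff 1\R$, and its proof relies on the commutator structure of the affine group---in particular the fact that two point-stabilizers with distinct fixed points generate all of $\aff 1\R$, so that their pairwise product set is too large to fit inside a set with small doubling---together with the Freiman--Ruzsa analysis of the multiplicative structure of the slopes in $\R^*$.
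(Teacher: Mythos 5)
The paper does not prove Theorem~\ref{thm:6}: it quotes the result from Elekes \cite{elekes1997linear}, and in Appendix~\ref{sec:prod-theor-affine} proves the analogous product theorems over $\C$ and $\F_p$ (Theorems~\ref{thm:17} and~\ref{thm:18}) that it actually uses. Those proofs take a route quite different from your sketch. There is no case split on whether the slope map $\pi\colon\aff 1\F\to\F^*$ is nearly injective. Instead, after a \bsg-type step produces a single set $A$ of small tripling, Lemma~\ref{lem:6} picks $x\in A$ that fails to centralize $A$, applies the orbit-stabilizer theorem \emph{for sets} to the conjugation action to extract a commutator set $S=x^Ax^{-1}\subseteq U$ and a centralizer slice $T$ with $|S||T|\geq|A|$ and $|T|\leq|A/U|$, confines the iterated product set via Ruzsa's triangle inequality, and then applies a sum--product estimate in the base field to a configuration of the form $|B-BC|$. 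The resulting dichotomy is ``one third of $A$ lies in a torus coset'' versus ``$|A/U|\ll K^{O(1)}$'', which directly yields concurrent or parallel subfamilies.

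Your sketch has two genuine gaps. First, the dichotomy on $|\pi(A')|$ does not cover all cases: if, say, $|\pi(A')|\approx N^{1/2}$, pigeonhole gives only about $N^{1/2}$ parallel lines --- nowhere near $\rho(K)N$ --- and at the same time the slope map is far from injective, so neither branch of your argument applies. The correct dividing line is $|\pi(A')|\ll_K 1$, and establishing that one must be in that regime or in the torus regime is exactly the content of the theorem, so it cannot be assumed as a case hypothesis. Second, and more seriously, the concurrent branch is not proved but deferred to what you yourself call ``precisely a product theorem for $\aff 1\R$'' --- which is the statement you set out to prove, so the argument is circular at its crux. The specific assertion that Freiman--Ruzsa on the slopes combined with Pl\"unnecke--Ruzsa transferred to $\aff 1\R$ forces concentration on a single stabilizer coset does not follow by itself: knowing the slopes form a short geometric progression says nothing about whether the intercepts are \emph{correlated} with the slopes, which is what concurrency means. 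The paper escapes this circularity by making the commutator/orbit-stabilizer step completely explicit in Lemma~\ref{lem:6} and outsourcing all the nontrivial growth to a sum--product inequality in $\F$ rather than to another group-theoretic product theorem.
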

Though it is not explicit in Elekes' work, $\rho$ depends polynomially on $K$.
Parallel and concurrent lines correspond to cosets of abelian subgroups of the affine group, thus Theorem~\ref{thm:6} is perhaps the first instance of a product theorem for a non-commutative group.
Such theorems have now been studied extensively \cite{breuillard2014brief,breuillard2011approximate, breuillard2012structure,gill2014growth,helfgott2008growth,helfgott2015growth,pyber2014growth}.

% CONSEQUENCES OF ELEKES THEOREM --- INVERSE INCIDENCE THM (omit?)

% SMALL SETS OF LINES
The assumption that $|L|\approx |Y|$ is essential for Elekes' reduction of Theorem~\ref{thm:22} to Theorem~\ref{thm:6}.
Borenstein and Croot \cite{borenstein2010lines} made the first step towards removing this restriction.
Building on \cite{borenstein2010lines}, Amirkhanyan, Bush, Croot, and Pryby~\cite{amirkhanyan2017lines} proved an analog of Conjecture~\ref{conj:solymosi} where $\alpha=N^{-\delta}$ for some small $\delta>0$.
\begin{thm}[Amirkhanyan, Bush, Croot, and Pryby]
  \label{thm:23}
For all $\epsilon>0$ there exists a $\delta>0$ such that the following holds for all sufficiently large positive integers $N$:

If $L$ is a set of $N^\epsilon$ lines in $\R^2$ that are $\alpha=N^{-\delta}$-rich in an $N\times N$ grid, then the lines of $L$ are not in general position.
\end{thm}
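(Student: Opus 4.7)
My plan is to argue by contradiction: suppose the $N^\epsilon$ lines of $L$ are in general position, and extract enough structural information about the associated affine transformations to force at least two parallels or three concurrences, provided $\delta = \delta(\epsilon)$ is chosen sufficiently small.

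To each line $\ell \in L$ I would associate the affine transformation $T_\ell \in \aff{1}{\R}$ whose graph is $\ell$, so that $L$ corresponds to a set $A \subseteq \aff{1}{\R}$ of size $N^\epsilon$ and the richness hypothesis becomes $|\{y \in Y : T_\ell(y) \in Y\}| \geq N^{1-\delta}$ for every $T_\ell \in A$. General position means that the linear parts of the $T_\ell$'s are pairwise distinct (no parallels) and that no three $T_\ell$'s pass through a common point (no concurrences); equivalently, in the bipartite incidence graph between $A$ and $Y \times Y$ (with $T$ joined to $(y, y')$ when $T(y) = y'$), every vertex on the $Y \times Y$ side has degree at most $2$.

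Next I would push this setup through an Elekes-style composition argument. A short averaging shows that for a positive proportion of pairs $(T_i, T_j) \in A \times A$, the composition $T_i \circ T_j^{-1}$ remains $\Omega(\alpha^2)$-rich in $Y \times Y$; applying the \ST{} bound to the graphs of these compositions then controls $|A \cdot A^{-1}|$, which in turn controls the multiplicative energy of $A$ in $\aff{1}{\R}$. An asymmetric \bsg{}-type argument for group actions (in the spirit of \cite{murphy2017group}, of which Theorem~\ref{thm:4} is an application) then upgrades this energy bound into a large subset $A' \subseteq A$ with small doubling $|A' \cdot A'| \leq K|A'|$, where $K$ is bounded by a fixed polynomial in $N^\delta$.

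Finally I would feed $A'$ into Elekes' product theorem (Theorem~\ref{thm:6}): since $\rho(K)$ depends polynomially on $K$, choosing $\delta$ small enough in terms of $\epsilon$ guarantees a subset $A'' \subseteq A'$ of size $\rho(K)|A'| \geq 3$ consisting entirely of parallel or concurrent transformations, contradicting general position. The main obstacle is the quantitative calibration in this chain: the polynomial losses in the composition/\ST{} step, the \bsg{} step, and the product theorem stack multiplicatively, so the allowable $\delta$ will be a small polynomial in $\epsilon$, and one must verify that the hypothesis $|A| \geq N^\epsilon$ survives the passage to the subset $A'$ with room to spare.
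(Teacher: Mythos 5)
Your framework is the right one and matches the paper's: pass to affine transformations $A \subseteq \aff{1}{\R}$ with $A \subseteq \sym_\alpha(Y)$, exploit approximate multiplicative closure of symmetry sets via composition and the \ST{} bound, run a \bsg{}-type argument to get small doubling, and finish with Elekes' product theorem (whose $\rho$ is indeed polynomial in $K$). The gap is in the middle step, and it is the central difficulty of the problem: you claim that one round of composition + \ST{} + \bsg{} produces a subset $A'$ with $|A'A'| \leq K|A'|$ where $K$ is a fixed polynomial in $N^\delta$. This is false when $|A| = N^\epsilon$ with $\epsilon < 1$. The averaging gives a dense relation $E$ with $A^{-1}\pp{E}A \subseteq \sym_{\alpha^2/2}(Y)$, and \ST{} bounds $|\sym_{\alpha^2/2}(Y)| \ll \alpha^{-6}|Y| = \alpha^{-6}N$. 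The resulting energy lower bound is of order $\alpha^{10}|A|^4/N$, so the best doubling constant a single \bsg{} application can extract is $K \gg \alpha^{-10}N/|A| = \alpha^{-10}N^{1-\epsilon}$ --- which is a large power of $N$, not a power of $N^\delta$. This is precisely the obstruction the paper emphasizes when it says the assumption $|L| \approx |Y|$ is essential for Elekes' original reduction.

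The paper's resolution (Theorem~\ref{thm:1}) is to \emph{iterate} the approximate-closure step $J$ times, producing $A = A_0, A_1, \ldots, A_J$ with $A_j \subseteq \sym_{\alpha_j}(Y)$, $\alpha_{j+1} = \alpha_j^2/2$, and then apply the pigeonhole principle to the telescoping product $|A_J|/|A_0| = \prod_j |A_{j+1}|/|A_j|$. Since $|A_J| \leq |\sym_{\alpha_J}(Y)| \ll \alpha_J^{-O(1)}N$ and $|A_0| = N^\epsilon$, some index $j$ satisfies $|A_{j+1}| \leq K|A_j|$ with $K \ll (\alpha_J^{-O(1)}N^{1-\epsilon})^{1/J}$; \bsg{} is then applied at that step. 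Taking $J$ large makes $N^{(1-\epsilon)/J}$ harmless, but the price is that the richness parameter degrades doubly exponentially, $\alpha_J = 2(\alpha/2)^{2^J}$, and one still needs $\alpha_J \geq 2/|Y|$. Balancing these constraints is exactly what forces $\delta \approx 1/(J\,2^J)$ with $J \approx \gamma\epsilon^{-1}$, as in Theorem~\ref{thm:3}. Without this iteration-plus-pigeonhole step your argument does not close, because the single-step doubling constant $N^{1-\epsilon}$ overwhelms the polynomial loss $\rho(K)$ in Elekes' product theorem.
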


Theorem~\ref{thm:23} implies that for all $\epsilon>0$, if $N$ is sufficiently large, then
\[
RLGP(\R,N,\alpha)\leq N^\epsilon.
\]
In \cite{amirkhanyan2017lines,borenstein2010lines}, the relationship between $\epsilon$ and $\delta$ is not explicit, so it is unclear how strong of a bound this method can achieve.

Borenstein and Croot roughly follow Elekes' method: they reduce to the case of small product set, then contradict structural hypotheses about the initial set of lines.
They do not use Theorem~\ref{thm:6} (or a similar theorem), but instead use sum-product results, some of which are unique to $\R$.
In particular, it is not clear that their methods should extend to $\F_p$ or to other questions about rich transformations for other groups, such as linear fractional transformations \cite{elekes2001combinatorics}.

We use a \emph{group action}\/ version of the \bsg{} theorem \cite{murphy2017group} to reduce the proof of Theorem~\ref{thm:4} to a product theorem for the affine group; in particular, over $\R$ we could use Elekes' Theorem~\ref{thm:6}.
The group action \bsg{} theorem is a generalization of Tao and Vu's asymmetric \bsg{} theorem \cite[Theorem 2.35]{tao2010additive}.
Helfgott pointed out that Borenstein and Croot's method is similar to Tao and Vu's method \cite{borenstein2010lines}.
The group action \bsg{} theorem is a common generalization of these methods.

\subsection{Connection to the sum-product problem}

Theorem~\ref{thm:22} implies a non-trivial \emph{sum-product estimate}.
A sum-product estimate is a lower bound of the form
\[
|A+A|+|AA| \gg |A|^{1+c}
\]
where $A$ is a finite subset of $\R$ (or more generally, a ring), $c>0$, and $A+A$ and $AA$ are the sets of pairwise sums and pairwise products of elements in $A$, respectively.
Erd\H{o}s and Szemer\'edi~\cite{erdos1983products} conjectured that $c$ can be taken arbitrarily close to $1$.
Elekes~\cite{elekes1997number} gave a beautiful geometric proof of a sum-product estimate with $c=1/4$, based on the \ST{} bound.

The following sum-product estimate follows from Theorem~\ref{thm:22}, using the method of \cite{elekes1997number}.
\begin{cor}
  \label{cor:4}
Let $A,B,$ and $C$ be finite subsets of $\R$ with $|B||C|= |A|$.
There is an absolute constant $c>0$ such that
if $|B|, |C|\geq |A|^\epsilon$ for some $\epsilon>0$, then 
\[
|A+B|+|AC|\gg |A|^{1+c\,\epsilon}.
\]
\end{cor}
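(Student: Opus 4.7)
Following Elekes' geometric method, I would associate to each pair $(b,c) \in B \times C$ the line $\ell_{b,c} := \{(x,y)\in\R^2 : y = c(x-b)\}$. Assuming $0 \notin C$ (which costs only an inessential constant), these $|B||C|=|A|$ lines are distinct, and each $\ell_{b,c}$ contains the $|A|$ points $\{(a+b, ca) : a \in A\}\subseteq (A+B)\times AC$. Setting $Y := (A+B)\cup AC$ and $S := |A+B|+|AC|$, the family $L$ of these lines is $\alpha$-rich in $Y\times Y$ with $\alpha \geq |A|/|Y| \geq |A|/S$.

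The crucial structural observation is that parallel and concurrent sub-families of $L$ are tightly constrained: lines $\ell_{b_1,c_1}$ and $\ell_{b_2,c_2}$ are parallel if and only if $c_1=c_2$, so a parallel family in $L$ has size at most $|B|$; and if $\ell_{b_1,c_1},\ldots,\ell_{b_k,c_k}$ all pass through a common point $(x_0,y_0)$, then $y_0 = c_i(x_0-b_i)$ determines $b_i$ uniquely from $c_i$, whence $k \leq |C|$. Since $|B||C|=|A|$ and $|B|,|C| \geq |A|^\epsilon$, every parallel or concurrent family in $L$ has size at most $\max(|B|, |C|) \leq |A|^{1-\epsilon}$.

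I would then argue by contradiction, supposing that $|A+B|+|AC| \leq |A|^{1+c\epsilon}$ for a small absolute constant $c > 0$ to be chosen. Theorem~\ref{thm:22} requires the number of lines to equal the grid side-length, whereas here $|L|=|A|$ may be substantially smaller than $|Y|$. To align the two, I would randomly sample $Y' \subseteq Y$ uniformly with $|Y'| = M := \lceil |A|/8 \rceil$. Each line of $L$ has expected incidence count $|A|(M/|Y|)^2$ in $Y'\times Y'$; because the points on $\ell_{b,c}$ have pairwise distinct first and second coordinates, the variance of this count is comparable to its mean, so the Paley--Zygmund inequality yields a realisation of $Y'$ for which at least $|L|/8$ lines are $\beta$-rich in $Y'\times Y'$ with $\beta \gtrsim |A|M/|Y|^2 \geq |A|^{-2c\epsilon}/16$.

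Selecting $M$ such lines and applying Theorem~\ref{thm:22} with $N=M$ produces a parallel or concurrent sub-family of size $\gg \beta^{C_1} M \gg |A|^{1-2C_1 c\epsilon}$, where $C_1$ is the absolute constant from Theorem~\ref{thm:22}. Choosing $c := 1/(4C_1)$ then contradicts the upper bound $|A|^{1-\epsilon}$ for any parallel or concurrent family in $L$, and the corollary follows. The main difficulty is arranging the sub-sampling step so that the constant $c$ is truly absolute; this relies crucially on the \emph{polynomial} dependence of Theorem~\ref{thm:22} on $\alpha$, which is unavailable in the sub-polynomial asymmetric bound of Theorem~\ref{thm:4} and is what makes the real-field setting of this corollary special.
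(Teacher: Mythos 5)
Your proof follows essentially the same approach as the paper's: associate each $(b,c)\in B\times C$ with the line $y=c(x-b)$, observe that parallel and concurrent sub-families of these lines are bounded by $\max(|B|,|C|)\leq |A|^{1-\epsilon}$, and derive a contradiction from Theorem~\ref{thm:22}. Where you go beyond the paper is the sub-sampling step. The paper's proof applies Theorem~\ref{thm:22} directly to $|L|=|A|$ lines in the grid $Y\times Y$, where $|Y|$ may be as large as $K|A|$, so the hypothesis ``$N$ lines $\alpha$-rich in an $N\times N$ grid'' is not literally met; this is exactly the mismatch the paper itself flags as the reason Elekes' method does not scale to $|L|\ll|Y|$, and the paper's proof of Corollary~\ref{cor:4} quietly glosses over it (it can be justified because here $|L|\geq |Y|/K$, so the loss is only polynomial in $\alpha\approx 1/K$). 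Your random sub-sampling plus Paley--Zygmund argument — producing a random $M\times M$ sub-grid with $M\approx |A|$ in which a constant fraction of the lines stay $\Theta(\alpha^2)$-rich — makes this alignment explicit and correct while keeping all losses polynomial in $\alpha$, so the constant $c$ remains absolute. Your closing observation, that the argument leans on the polynomial $\alpha$-dependence in Theorem~\ref{thm:22} and therefore cannot be run through the sub-polynomial Theorem~\ref{thm:4}, is also correct and is precisely why the paper states Corollary~\ref{cor:4} as a consequence of Theorem~\ref{thm:22} rather than of the general machinery.
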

\begin{proof}
  Let $\ell_{b,c}(x)=c(x-b)$ and let $L$ denote the set of $\ell_{b,c}$ with $b\in B$ and $c\in C$.
Since $|B|,|C|\geq |A|^{\epsilon}$, at most $|A|^{1-\epsilon}$ lines of $L$ are parallel or concurrent.

Set $Y=(A+B)\cup (AC)$.
Each line of $L$ is incident to at least $|A|$ points of $Y\times Y$.

If
\[
|A+B|+|AC|\leq K|A|,
\]
then each line of $L$ is $\alpha$-rich in $Y\times Y$ with $\alpha = 1/2K$.

By Theorem~\ref{thm:22}, at least $C_0\alpha^{C_0}|L|$ lines of $L$ are parallel or concurrent, thus we have
\[
K^{-C_0}|A| \ll |A|^{1-\epsilon} \implies K\gg |A|^{\epsilon/C_0}.
\]
By choosing $K$ to be a sufficiently small power of $|A|$, we have a contradiction.
\end{proof}

% ASYMMETRIC SUM PRODUCT
The stronger conclusion of Theorem~\ref{thm:4} over Theorem~\ref{thm:23} allows us to give a geometric proof of Bourgain's asymmetric sum-product estimate  \cite{bourgain2005sum-product}.
\begin{thm}[Asymmetric sum-product estimate]
  \label{thm:29}
Let $A,B,$ and $C$ be finite subsets of a field $\F$.

If $\F=\C$ and there is an $\epsilon > 0$ such that $|B|, |C|\geq |A|^{\epsilon}$, then there exists a constant $c=c(\epsilon)>0$ such that
\[
|A+B|+|AC|\gg |A|^{1+c}.
\]
If $\F=\F_p$, the same result holds provided that $|A| \ll p^{1-O(\epsilon)}$.
\end{thm}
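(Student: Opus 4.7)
The plan is to repeat the geometric setup of Corollary~\ref{cor:4} but invoke the stronger structure result Theorem~\ref{thm:4} in place of Elekes' Theorem~\ref{thm:22}. Let $Y := (A+B)\cup(AC)$ and $L := \{\ell_{b,c}(x)=c(x-b) : b\in B,\, c\in C\}$. Since $(a+b,ac)\in Y\times Y$ for every $a\in A$, each $\ell_{b,c}$ meets $Y\times Y$ in at least $|A|$ points. Suppose for contradiction that $|A+B|+|AC|\leq |A|^{1+c}$ for a constant $c = c(\epsilon) > 0$ to be chosen; then $|Y|\ll|A|^{1+c}$ and every line of $L$ is $\alpha$-rich with $\alpha \geq |A|^{-c}/2$. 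Discarding the at most one slope-zero line, the map $(b,c)\mapsto \ell_{b,c}$ is injective, so $|L|=|B||C|$.

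Now apply Theorem~\ref{thm:4} to obtain, for any integer $J$ with $(\alpha/2)^{2^J}\geq 1/|Y|$, a parallel or concurrent subfamily $L'\subseteq L$ satisfying $|L'|\gg (\alpha/2)^{C\cdot 2^J}|Y|^{-C/J}|L|$. The matching upper bound on $|L'|$ comes from the Elekes parametrization: a parallel family of $L$ shares a slope $c\in C$ and so has at most $|B|$ members, while a concurrent family through a point $(x_0,y_0)$ with $y_0\neq 0$ has at most $|B|$ members (since $c=y_0/(x_0-b)$ is determined by $b$), and the degenerate concurrent family through some $(b_0,0)$ has at most $|C|$ members. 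Hence $|L'|\leq \max(|B|,|C|) = |L|/\min(|B|,|C|) \leq |L|\,|A|^{-\epsilon}$.

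Comparing the two bounds, taking logarithms, and substituting $\log(2/\alpha)\lesssim c\log|A|$ and $\log|Y|\lesssim(1+c)\log|A|$ reduces the needed contradiction to
\[
2^J c + \frac{1}{J}\gtrsim \frac{\epsilon}{C}.
\]
The admissibility condition becomes $2^J \leq O(1/c)$; over this range the minimum of the left side is of order $1/\log(1/c)$, attained near $J_\ast \asymp \log(1/c)/\log\log(1/c)$. Choosing $c = \exp(-C_1/\epsilon)$ with $C_1$ large enough therefore violates the inequality at $J=J_\ast$ and yields $|A+B|+|AC|\gg |A|^{1+c(\epsilon)}$ with $c(\epsilon)=\exp(-O(1/\epsilon))$. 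In positive characteristic Theorem~\ref{thm:4} also demands $2^J\log(2/\alpha)\leq \log(p/|Y|)$; under $|A|\ll p^{1-C_2\epsilon}$ the right side is $\Omega(\epsilon\log p)$, which dominates $2^{J_\ast}\log(2/\alpha)\asymp \log|A|/\log(1/c)$ for the chosen $c$. The main obstacle is this multi-parameter balancing: one must pick $c$, $J$, and (in the $\F_p$ case) the allowed size of $p$ so that the lower and upper bounds on $|L'|$ conflict while every hypothesis of Theorem~\ref{thm:4} stays in force; all other steps — the Elekes line system, injectivity of the parametrization, and the parallel/concurrent family bounds — are routine once this optimization is set up.
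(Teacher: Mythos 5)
Your argument follows the paper's own route: the paper derives Theorem~\ref{thm:29} from Theorem~\ref{thm:2}, which is proved with exactly this Elekes line system $\ell_{b,c}(x)=c(x-b)$, the grid $Y=(A+B)\cup AC$, the structure theorem (Theorem~\ref{thm:9x}, equivalently Theorem~\ref{thm:4}), and the observation that any parallel or concurrent subfamily has size at most $\max(|B|,|C|)$; the paper then substitutes $2K=|A|^{1/(J2^J)}$ with $J\approx 1/\epsilon$, which is the same $J$-versus-$c$ optimization you carry out in one step. (Your stated $J_\ast\asymp\log(1/c)/\log\log(1/c)$ should be $J_\ast\asymp\log(1/c)$ from $J2^J\approx 1/c$, but this does not change the resulting $c(\epsilon)=\exp(-O(1/\epsilon))$.)
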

In fact, we achieve estimates comparable to those of Shkredov \cite{shkredov2017remarks}: we may take $c=1/(J2^J)$ for $J\approx\gamma\epsilon$.
\begin{thm}[Asymmetric sum-product theorem]
  \label{thm:2}
Suppose that $A,B,C\subseteq\F$ are finite.
Let $J>0$ be a positive integer and let $0< K \leq \frac 12 |A|^{1/2^J}$ be a parameter.

If $\F=\C$, then either
\begin{equation}
  \label{eq:5}
  |AC|+|A+B| > K|A|,
\end{equation}
or
\begin{equation}
  \label{eq:6}
  \min(|B|,|C|) \ll K^{C 2^J} |A|^{C/J}.
\end{equation}

If $\F=\F_p$, the same dichotomy holds, provided that $|A|\leq (2K)^{-2^J} p$.
\end{thm}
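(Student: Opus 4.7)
The plan is to follow the geometric template of the proof of Corollary~\ref{cor:4}, substituting Theorem~\ref{thm:4} for Elekes' Theorem~\ref{thm:22}. Suppose~(\ref{eq:5}) fails, so $|A+B|+|AC|\leq K|A|$. I set $Y=(A+B)\cup AC$, which has size at most $K|A|$. For each $(b,c)\in B\times C$, define the affine line $\ell_{b,c}(x)=c(x-b)$. The map $a\mapsto(a+b,ca)$ sends $A$ into $Y\times Y$, so each $\ell_{b,c}$ is incident to at least $|A|$ points of $Y\times Y$, and hence is $\alpha$-rich for $\alpha=|A|/|Y|\geq 1/K$. After discarding $0$ from $C$ (which does not change $AC$) the lines $\ell_{b,c}$ are pairwise distinct, so the set $L=\{\ell_{b,c}\}$ has $|L|=|B||C|$.

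To apply Theorem~\ref{thm:4} with $\alpha=1/K$, I check that $(\alpha/2)^{2^J}=(2K)^{-2^J}\geq 1/|A|\geq 1/|Y|$ using the hypothesis $K\leq\tfrac12|A|^{1/2^J}$. In the $\F_p$ case the requirement $|Y|\leq(2K)^{-2^J}p$ follows from the assumption $|A|\leq(2K)^{-2^J}p$ together with $|Y|\leq K|A|$, after absorbing a bounded power of $K$ into the $2K$ base. Theorem~\ref{thm:4} then produces a subset $L'\subseteq L$ of parallel or concurrent lines satisfying
\[
|L'|\gg(2K)^{-C\,2^J}|Y|^{-C/J}|B||C|.
\]

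The last step is a direct check that parallel and concurrent subfamilies of $L$ are controlled by $|B|$ and $|C|$ respectively. Two lines $\ell_{b,c}$ and $\ell_{b',c'}$ are parallel iff $c=c'$, so any parallel subfamily has at most $|B|$ elements. For concurrency at a point $(x_0,y_0)$, the equation $c(x_0-b)=y_0$ determines $b$ from $c\neq 0$, so any concurrent subfamily has at most $|C|$ elements. If $L'$ is parallel, the lower bound on $|L'|$ forces $|C|\leq (2K)^{C\,2^J}|Y|^{C/J}$; if $L'$ is concurrent, the same bound holds for $|B|$. Either way,
\[
\min(|B|,|C|)\leq(2K)^{C\,2^J}|Y|^{C/J}\ll K^{C\,2^J}|A|^{C/J},
\]
where the last inequality uses $|Y|\leq K|A|$ and $K\leq\tfrac12|A|^{1/2^J}$ to absorb the residual $K^{C/J}$ factor into the constants. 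The main technical point is the $\F_p$ bookkeeping: Theorem~\ref{thm:4} places its hypothesis on $|Y|$ rather than $|A|$, and one must verify that the weaker standing assumption on $|A|$ does not spoil the required bound on $|Y|$.
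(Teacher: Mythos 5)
Your proof follows the paper's argument essentially verbatim: set $Y = AC \cup (A+B)$, observe that the lines $\ell_{b,c}(x)=c(x-b)$ are $\alpha$-rich in $Y\times Y$ with $\alpha\geq 1/K$, apply Theorem~\ref{thm:4}, and use the fact that at most $|B|$ of these lines can be mutually parallel and at most $|C|$ can be mutually concurrent. Over $\C$, your verification of the hypothesis $(\alpha/2)^{2^J}\geq 1/|Y|$ is correct, as is the closing constant bookkeeping (using $2K\leq K^2$ for $K\geq 2$ and $|Y|^{C/J}\leq K^{C/J}|A|^{C/J}\leq K^{C2^J}|A|^{C/J}$).

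The one step that does not go through as written is your $\F_p$ verification. You claim that $|Y|\leq(2K)^{-2^J}p$ follows from $|A|\leq(2K)^{-2^J}p$ and $|Y|\leq K|A|$ "after absorbing a bounded power of $K$ into the $2K$ base." The chain only gives $|Y|\leq K(2K)^{-2^J}p$, and the stray factor of $K$ cannot be folded into the base of a $2^J$-th power: requiring $K(2K)^{-2^J}\leq(2K')^{-2^J}$ forces $(K'/K)^{2^J}\leq K^{-1}$, so $K'$ would have to be \emph{smaller} than $K$, which goes the wrong way. To be fair, the paper's own proof simply asserts "the constraints on $K$ imply $\ldots |Y|\leq(\alpha/2)^{2^J}p$" without justification, and that assertion is off by the same factor of $K$. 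A clean repair for $J\geq 2$ is to invoke Theorem~\ref{thm:4} with $J-1$ in place of $J$: the $\F_p$ hypothesis then reads $|Y|\leq(2K)^{-2^{J-1}}p$, which does follow since $K(2K)^{-2^J}\leq(2K)^{-2^{J-1}}$ is equivalent to $K\leq(2K)^{2^{J-1}}$ and holds for $J\geq 1$; the resulting conclusion $|L'|\gg(2K)^{-C2^{J-1}}|Y|^{-C/(J-1)}|L|$ is still of the stated form after enlarging $C$.
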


\subsection{Acknowledgements}

I would like to thank the following people for helpful conversations and for reading various drafts of this work:
Ernie Croot,  
Harald Helfgott,
Alex Iosevich,
John Mackay,  
Jonathan Pakianathan,
Sarah Peluse,
Giorgis Petridis,
Misha Rudnev,  
Ilya Shkredov,  
Jozsef Solymosi,
Sophie Stevens, and 
Yufei Zhao.
I would also like to thank the Arizona Winter School and the Simons Institute, whose workshops resulted in several of the keys ideas in this work, as well as the Heilbronn Institute for Mathematical Research and the University of Rochester, who funded these trips.

\subsection{Notation}
\label{sec:notation}

We use standard asymptotic notation: $f=O(g)$ means that there is a constant $C>0$ such that $|f(x)|\leq C g(x)$ for all $x$; $f\ll g$ means the same as $f=O(g)$, $f=\Omega(g)$ and $f\gg g$ mean the same as $g\ll f$.
The notation $f\approx g$ means that $f\ll g$ and $g\ll f$; $f=\Theta(g)$ means $f\approx g$.
We abuse asymptotic notation slightly for stating hypotheses: a condition of the form $f\ll g$ means that there exists a constant $C$ such that if $|f|\leq Cg$, then the theorem holds.
Notation such as $f\ll_\alpha g$ or $f=O_\epsilon(g)$ means that the implicit constant $C$ depends on the parameter in the subscript.

Unless otherwise stated, we use the following notation throughout:
\begin{itemize}
\item  $\alpha$ denotes a real number in $(0,1]$,
\item lower case Greek letters denote (typically small) real parameters,
\item $C$ denotes a positive constant, which may change from line to line, 
\item  $\F$ denotes a field, which may be $\R,\C,\Q,$ or $\F_p$, the finite field with prime cardinality $p$,
\item $Y$ denotes a finite subset of $\F$, and $N$ denotes $|Y|$,
\item $L$ denotes a finite set of lines in $\F^2=\F\times\F$,
\item $G$ denotes the group $\aff 1\F$ of affine transformations of $\F$; we represent elements of $\aff 1\F$ by linear functions $x\mapsto ax+b$ with $a,b\in F$, $a\not=0$, with composition as the group operation,
\item $A$ denotes a finite subset of $G=\aff 1\F$.
\item for $Y\subseteq\F$ and $0<\alpha\leq 1$, we use $\sym_\alpha(Y)$ to denote the set of $g\in G$ such that $|Y\cap gY|\geq\alpha|Y|$; this is called a \emph{symmetry set}\/ of $Y$.
\end{itemize}

\section{Lower bounds for rich lines in grids}
\label{sec:lower-bounds-rich}

In this section, we disprove Conjecture~\ref{conj:solymosi}, which we recall here.
\begin{repconj}{conj:solymosi}
  Among the lines in $\F^2$ that are $\alpha$-rich in an $N\times N$ Cartesian product set, at most $C=C(\alpha)>0$ lines are in general position.
\end{repconj}
% In terms of symmetry sets, Conjecture~\ref{conj:solymosi} states that for any $0<\alpha<1$ and any subset $Y\subseteq\F$, at most $C$ transformations in $\sym_\alpha(Y)$ are in general position.

In Section~\ref{sec:quant-lower-bounds-R}, we disprove Conjecture~\ref{conj:solymosi} over $\Q$ with an explicit construction.
In Section~\ref{sec:quant-lower-bounds-Fp}, we give an explicit construction of a large set of lines in general position in $\F_p^2$.
In Section~\ref{sec:quant-lower-bounds-C}, we embed the counter-examples from the previous section into $\C^2$.

\subsection{Qualitative lower bound based on amenability}

% We thank an anonymous referee for suggesting we include the following result
\begin{thm}
  \label{thm:444414}
Let $\F$ be an infinite field, and let $L$ be an arbitrary set of lines in $\F^2$.
For all $\epsilon >0$ and all $N > 0$, there exists a subset $Y\subseteq \F$ such that $|Y|\geq N$ and $|(Y\times Y)\cap \ell|\geq (1-\epsilon)|Y|$ for all $\ell$ in $L$.
\end{thm}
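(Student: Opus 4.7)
The plan is to recognize the condition $|\ell \cap (Y \times Y)| \geq (1-\epsilon)|Y|$ as a Følner-type approximate invariance and to invoke amenability of $\aff 1 \F$. Take $L$ to be finite (as is implicit in the statement). Partition $L$ into horizontal lines, vertical lines, and \emph{oblique} lines $\ell : y = ax + b$ with $a \neq 0$. A horizontal or vertical line contributes $|Y|$ incidences as soon as its distinguished coordinate lies in $Y$, so these requirements reduce to forcing $Y$ to contain a specific finite set $E \subseteq \F$. An oblique line corresponds to an invertible affine transformation $f(x) = ax + b \in \aff 1 \F$, and since $f$ is a bijection of $\F$ one checks $|\ell \cap (Y \times Y)| = |Y \cap f(Y)|$. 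Let $F = \{f_1, \ldots, f_k\}$ denote the associated affine maps.

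Next, let $H := \langle F \rangle \leq \aff 1 \F$. Since $\aff 1 \F$ is metabelian---an extension of the abelian translation subgroup by $\F^*$---the group $H$ is finitely generated solvable, hence amenable. Fix $\delta := \epsilon/4$. By the left Følner condition, there exists a finite $T \subseteq H$, taken arbitrarily large, with $|f_i T \mathbin{\triangle} T| \leq \delta |T|$ for every $i$.

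To transfer the Følner set from $H$ to $\F$, pick $x_0 \in \F$ so that no non-identity element of the finite set $T^{-1} T \cup \bigcup_i T^{-1} f_i^{\pm 1} T$ fixes $x_0$. Such $x_0$ exists because each non-identity element of $\aff 1 \F$ has at most one fixed point while $\F$ is infinite. Set $Y_0 := T \cdot x_0$; genericity of $x_0$ makes the orbit map injective on the relevant subsets of $H$, so $|Y_0| = |T|$ and $|f_i Y_0 \cap Y_0| = |f_i T \cap T| \geq (1 - \delta/2)|T|$. Finally set $Y := Y_0 \cup E$; choosing $|T| \geq \max(N, 4|E|/\epsilon)$ yields $|Y| \geq N$ and $|f_i Y \cap Y| \geq (1-\epsilon)|Y|$ for each oblique line, while horizontal and vertical lines in $L$ are $(1-\epsilon)$-rich (in fact, fully rich) automatically.

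The main obstacle is that Følner sets live in the group $H$, but we need approximately invariant subsets of the space $\F$ on which $H$ acts non-freely (every point has stabilizer isomorphic to $\F^*$). Choosing $x_0$ generically avoids the finitely many fixed points of the finitely many relevant group elements, making the orbit map injective on the Følner set, so that group-level invariance pulls back to $\F$. Beyond this the argument is just amenability of solvable groups together with elementary counting.
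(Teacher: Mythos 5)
Your proposal is correct and uses the same core idea as the paper's proof: the subgroup of $\aff 1\F$ generated by the affine maps corresponding to the lines is finitely generated solvable, hence amenable, and a F\o{}lner set furnishes the desired grid $Y$. The difference lies in how one produces an almost-invariant finite subset of the \emph{space} $\F$, rather than of the group. The paper's Definition~\ref{def:44443} is stated directly as a F\o{}lner condition for the action $\Gamma\actson X$ (a sequence of finite subsets of $X$), and the passage from ordinary amenability of $\Gamma$ to this action-level F\o{}lner sequence is deferred to the references. You instead take a F\o{}lner set $T$ in the group $H=\langle F\rangle$ and push it into $\F$ via the orbit map $t\mapsto t\cdot x_0$, choosing $x_0$ generically so that no non-identity element of $T^{-1}T\cup\bigcup_i T^{-1}f_i^{\pm1}T$ fixes it, which makes the orbit map injective on the relevant sets and converts $|f_iT\triangle T|\leq\delta|T|$ into $|f_iY_0\cap Y_0|\geq(1-\delta/2)|Y_0|$. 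Spelling out this transfer is a genuine clarification: $\aff 1\F$ does not act freely on $\F$ (every point has a nontrivial stabilizer), so injectivity of the orbit map is not automatic, and the generic-base-point argument you supply is exactly what is needed to justify the step the paper elides. You also explicitly dispose of horizontal and vertical lines, which do not correspond to elements of $\aff 1\F$; the paper handles this by convention, so your decomposition is a small but correct completion of the argument. Beyond these points the two proofs coincide in substance.
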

This shows that \emph{any}\/ set of lines, regardless of their structure, are $(1-\epsilon)$-rich in some grid, provided that the number of lines is sufficiently small compared to the size of the grid.
This result is trivial if $\F$ is finite, since we may take $Y=\F$.

The proof of Theorem~\ref{thm:444414} uses a strategy of Lubotzsky and Weiss \cite{lubotzky1993groups} based on the \emph{amenability}\/ of finitely generate solvable groups, such as finitely generated subsets of $\aff 1\R$.

We recall some basic facts about amenable groups.
\begin{defn}[Amenability]
\label{def:44443}
  Let $\Gamma$ be a group generated by $S=\{\gamma_1,\ldots,\gamma_k\}$ and suppose that $\Gamma$ acts on a set $X$.
We say that $\Gamma$ is \emph{amenable}\/ if there exists a sequence of finite subsets $F_n\subseteq X$ such that for all $\gamma\in S$
\begin{equation}
  \label{eq:444422}
  \lim_{n\to\infty}\frac{|\gamma F_n\Delta F_n|}{|F_n|} = 0.
\end{equation}
\end{defn}
Here $A\Delta B = (A\setminus B)\cup (B\setminus A)$ is the \emph{symmetric difference}.
A sequence of subsets $\{F_n\}$ satisfying (\ref{eq:444422}) is called a \emph{F\o{}lner sequence}.
If $X$ is infinite, then $|F_n|\to\infty$ as $n\to\infty$ (see \cite[Top of p.23]{bartholdi2018amenability}).
See \cite[Lemma 3.6, Lemma 3.7, Theorem 3.23]{bartholdi2018amenability}.

Any finite group $G$ is amenable and any finitely generated abelian group is amenable.
In fact, any group of sub-exponential growth is amenable, as can be seen by taking $\{F_n\}$ to be a sufficiently sparse sequence of balls about the identity in the word metric.
Though solvable groups are not necessarily of sub-exponential growth, finitely generated solvable groups are amenable.
\begin{prop}
  \label{prop:44443}
Finitely generated solvable groups are amenable.
\end{prop}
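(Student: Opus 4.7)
The plan is to induct on the derived length $d$ of $\Gamma$, using two classical ingredients: finitely generated abelian groups are amenable, and the class of amenable groups is closed under extensions by amenable groups.

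For the base case $d = 1$ (i.e.\ $\Gamma$ abelian), I would use that a finitely generated abelian group has polynomial growth. If $S = \{\gamma_1,\ldots,\gamma_k\}$ is the generating set and $B_n$ denotes the $n$-ball in the word metric on $\Gamma$, then $|B_n|$ is eventually polynomial in $n$, so $|B_{n+1}|/|B_n| \to 1$; since $sB_n \subseteq B_{n+1}$ for every $s \in S$, this forces $|sB_n \Delta B_n|/|B_n| \to 0$, so $\{B_n\}$ is the desired F\o{}lner sequence.

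For the inductive step, set $N = [\Gamma,\Gamma]$. Then $N$ is normal with $\Gamma/N$ abelian, and $N$ is solvable of derived length $d-1$. By the inductive hypothesis (applied with a countable generating set of $N$, or equivalently using that subgroups of amenable groups are amenable), $N$ is amenable, while $\Gamma/N$ is finitely generated abelian, hence amenable by the base case. The main content is now the extension lemma: given F\o{}lner sequences $\{E_m\}$ in $N$ and $\{Q_k\}$ in $\Gamma/N$, I would choose a set-theoretic lift $T_k \subseteq \Gamma$ of $Q_k$ and form $F_{k,m} = T_k E_m$. For $\gamma \in S$ and $t \in T_k$, writing $\gamma t = t' n_{t,\gamma}$ where $t'$ represents the coset $\gamma t N$ and $n_{t,\gamma} \in N$, the relevant $t'$ lie in $T_k$ except for a negligible set (by the F\o{}lner property of $\{Q_k\}$), and the finitely many elements $n_{t,\gamma}$ (as $t$ ranges over $T_k$ and $\gamma$ over $S$) act nearly trivially on $E_m$ provided $m = m(k)$ is chosen sufficiently large.

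The main obstacle is the bookkeeping in this extension step: one must simultaneously control how $\gamma$ permutes the lifts $T_k$ (via the F\o{}lner condition on $Q_k$) and how conjugation by elements of $T_k$ distorts $E_m$. The latter requires enlarging $E_m$ to be approximately invariant under a finite set of $N$-translations; this is possible because for each fixed $k$, the set $T_k$ is finite, so only finitely many such translations need to be absorbed, and the F\o{}lner property of $\{E_m\}$ in $N$ lets us choose $m(k)$ accordingly. A direct count then yields $|\gamma F_{k,m(k)} \Delta F_{k,m(k)}|/|F_{k,m(k)}| \to 0$, closing the induction.
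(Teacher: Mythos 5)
Your route matches the paper's, which simply cites a reference for the two ingredients (amenability is closed under extensions; solvable groups are iterated extensions by abelian groups); your argument unpacks that citation. The extension construction $F_{k,m}=T_kE_m$ with the two error terms controlled separately is the standard proof of extension-closure, and the base case via subexponential growth of balls is fine.

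One step as stated does not go through. You justify the amenability of $N=[\Gamma,\Gamma]$ via the parenthetical ``(applied with a countable generating set of $N$, or equivalently using that subgroups of amenable groups are amenable).'' The second alternative is circular: it presupposes what you are proving, namely that $\Gamma$ is amenable. The first does not directly apply either, since your induction is over \emph{finitely generated} solvable groups, and $N$ need not be finitely generated --- the lamplighter group $\Z_2\wr\Z$ is a two-generator metabelian group whose commutator subgroup is an infinite locally finite abelian group. The fix is already latent in your own bookkeeping: for each fixed $k$ you only need $E_{m(k)}$ to be approximately invariant under the finitely many elements $n_{t,\gamma}$ with $t\in T_k$, $\gamma\in S$; these generate a finitely generated subgroup of $N$, which is solvable of derived length $<d$, so the inductive hypothesis applies to \emph{it} and produces the required $E_{m(k)}$. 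Make that the stated justification, or alternatively broaden the induction to all countable solvable groups, proving the abelian base case by exhausting a countable abelian group by its finitely generated subgroups.
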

This is because the property of amenability is preserved by taking extensions and solvable groups can be constructed by extensions by abelian groups \cite[Proposition 7.1]{bartholdi2018amenability}.

We now have the background needed to prove Theorem~\ref{thm:444414}.
\begin{proof}[Proof of Theorem~\ref{thm:444414}]
  Let $S\subseteq\aff 1\F$ be a set of affine transformations corresponding to the lines in $L$ (that is, each $\ell$ in $L$ has equation $y=\gamma(x)$ for some $g\in S$).
Fix a positive integer $N$.
We will show that there is a subset $Y\subseteq\F$ such that $|Y|\geq N$ and $|\gamma(Y)\cap Y|\geq (1-\epsilon)|Y|$ for all $g\in S$.

Let $\Gamma$ denote the subgroup of $\aff 1\F$ generated by $S$.
Since $\Gamma$ is solvable and finitely generated, it is amenable by Proposition~\ref{prop:44443}.
By Definition~\ref{def:44443} there is a F\o{}lner sequence $\{F_n\}$ of subsets of $X$.

By (\ref{eq:444422}), there is a positive integer $n_0$ such that for all $n\geq n_0$ and all $\gamma$ in $S$, we have
\[
|\gamma F_n\Delta F_n| \leq 2\epsilon|F_n|.
\]
Since
\[
|\gamma F_n\Delta F_n| = 2(|F_n|-|F_n\cap\gamma F_n|),
\]
we have $|\gamma(F_n)\cap F_n|\geq (1-\epsilon)|F_n|$ for all $n\geq n_0$.

Since $X=\F$ is infinite, $|F_n|\to\infty$ as $n\to\infty$.
It follows that for some $n_1\geq n_0$, if $Y=F_{n_1}$ then $|Y|\geq N$.
\end{proof}
Lubotzky~\cite[Proposition 3.3.6]{lubotzky1994discrete} has shown how to apply this strategy to $\aff 1{\F_p}$ acting on $\F_p$, which would give a qualitative theorem of the same sort for sequences of finite affine groups.
Rather than take this approach, we give an explicit example in Section~\ref{sec:quant-lower-bounds-Fp}, based on work of Klawe \cite{klawe1981non-existence}.

\subsection{Quantitative lower bounds over $\R$}
\label{sec:quant-lower-bounds-R}

In this section, we give and explicit construction of arbitrarily large finite sets $Y$ in $\R$ such that %$\sym_\alpha(Y)$ contains
$Y\times Y$ supports
a large number of affine transformations in general position.
In fact, the construction is defined over the integers.
\begin{thm}
  \label{thm:28}
 For all $0<\alpha\leq 1$ and all $N_0 >0$ there exists a set $Y\subseteq\Z$ such that $|Y|\geq N_0$ and $Y\times Y$ supports a set $L$ of $\alpha$-rich lines in general position such that
\[
|L| \gg (1-\alpha) \frac{\log |Y|}{\log\log|Y|}.
\]
\end{thm}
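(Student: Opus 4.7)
The plan is to apply the amenability argument of Theorem~\ref{thm:444414} quantitatively to the specific solvable subgroup $\Gamma = \langle \sigma, \tau \rangle \leq \aff 1 \Q$ with $\sigma(x) = x + 1$ and $\tau(x) = 2x$. This group is isomorphic to the Baumslag--Solitar group $BS(1,2) \cong \Z[1/2] \rtimes \Z$; it is metabelian, hence solvable and amenable by Proposition~\ref{prop:44443}. The plan is to choose an explicit F\o{}lner set for the natural action of $\Gamma$ on $\Q$, exhibit many elements of $\Gamma$ whose associated lines are in general position, and then scale everything to $\Z$.

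Explicitly, I would take
\[
F_n = \{j/2^\ell \in \Z[1/2] : 0 \leq \ell \leq n,\ |j| \leq 2^n,\ \text{with $j$ odd or $\ell = 0$}\},
\]
which by a level-wise count has $|F_n| = (n+2)2^n + 1 \approx n \cdot 2^n$. For $k = 0, 1, \ldots, K$ with $K := \lfloor c(1-\alpha) n \rfloor$ (where $c > 0$ is a small absolute constant), I would use the element $g_k := \sigma^k \tau^k \in \Gamma$, which acts as $x \mapsto 2^k x + k$ and corresponds to the line $\ell_k : y = 2^k x + k$.

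General position is the easy part. The slopes $2^k$ are pairwise distinct, so no two lines are parallel. If three lines $\ell_{k_1}, \ell_{k_2}, \ell_{k_3}$ with $0 < a := k_2 - k_1 < b := k_3 - k_1$ were concurrent, solving the resulting system yields $a - b = a \cdot 2^b - b \cdot 2^a$; the left side is strictly negative while the right side is nonnegative (since $a \cdot 2^{b-a} \geq b$ for every $a \geq 1, b \geq a+1$), a contradiction. For $\alpha$-richness of $\ell_k$, I would estimate $|\{z \in F_n : g_k(z) \in F_n\}|$ by case analysis on the level $\ell$ of $z = j/2^\ell$. At levels $\ell < k$, the image $j \cdot 2^{k-\ell} + k$ is an integer exceeding $2^n$ unless $|j| \lesssim 2^{n-k+\ell}$, which costs in total about $k \cdot 2^n$ elements (the dominant loss). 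At level $\ell = k$, the image $j + k$ pushes roughly $k/2$ odd $j$'s out of $[-2^n, 2^n]$. At levels $\ell > k$, the image $(j + k \cdot 2^{\ell-k})/2^{\ell-k}$ lies automatically in $F_n$, because $|j + k \cdot 2^{\ell-k}|/2^{\ell-k} \leq 2^{n-1} + k \leq 2^n$ when $k \leq 2^{n-1}$. Summing gives $|\{z \in F_n : g_k(z) \in F_n\}| \geq (n - k + 2) 2^n - O(k)$, so the richness ratio is $\geq 1 - k/(n+2) - o(1)$, and thus $\ell_k$ is $\alpha$-rich whenever $k \leq c(1-\alpha) n$ for a suitably small $c$.

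Finally, I would scale by $D := 2^n$ to obtain $Y := D \cdot F_n \subseteq \Z$ with $|Y| = |F_n|$ and scaled lines $\ell_k' : y = 2^k x + k D$; both general position and $\alpha$-richness are preserved under this affine change of coordinates. Choosing $n$ large enough that $|Y| \geq N_0$ (possible since $|F_n| \to \infty$), we obtain
\[
|L| \geq K + 1 \gg (1-\alpha) n \approx (1-\alpha) \log |Y|,
\]
which in particular gives the claimed bound $\gg (1-\alpha) \log|Y|/\log\log|Y|$. The main obstacle is case (i) in the richness analysis: while the intuition (the dilation $\tau^k$ squeezes the bottom $k$ levels of $F_n$ outside of $F_n$) is clear, one must carefully track the surviving $j$'s at each level $\ell < k$ --- respecting the parity constraint of the dyadic normal form --- and verify that the aggregate loss is $O(k) \cdot 2^n$ rather than a worse multiple of $2^n$.
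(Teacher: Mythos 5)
Your proposal is correct and takes a genuinely different, and in fact more efficient, route than the paper. The paper's proof uses a base-$N$ construction $Y=\bigcup_{k=0}^{N-1}N^k\big(N^N+[0,N^N)\cap\Z\big)$ of size $N^{N+1}$ with lines $\ell_k(x)=N^kx+kN^{N-1}$, yielding $|L|\approx\epsilon N\approx\epsilon\log|Y|/\log\log|Y|$. You instead exhibit an explicit F\o{}lner set for $BS(1,2)\cong\Z[1/2]\rtimes_2\Z$ acting on $\Z[1/2]$ (the dyadic ``staircase'' $F_n$), use the transformations $g_k(x)=2^kx+k$, and clear denominators by multiplying by $2^n$. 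Both constructions implement the amenability heuristic, but yours is base~$2$ with $n+1$ levels of width $\approx 2^n$, giving $|Y|\approx n\cdot 2^n$ and $|L|\gg(1-\alpha)n\approx(1-\alpha)\log|Y|$ --- which actually beats the bound stated in Theorem~\ref{thm:28} by a factor of $\log\log|Y|$. Your general-position argument via the identity $a-b=a\cdot 2^b-b\cdot 2^a$ is the same determinant trick the paper uses, adapted to base~$2$, and the scaling to $\Z$ preserves both richness and general position as you say.

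There is one slip, though it does not affect the conclusion. In the case $\ell>k$, you claim the image lies \emph{automatically} in $F_n$ because $|g_k(z)|\le 2^{n-1}+k\le 2^n$. But membership in $F_n$ is a constraint on the \emph{numerator} $j+k\cdot 2^{\ell-k}$ at level $\ell-k$, namely $|j+k\cdot 2^{\ell-k}|\le 2^n$, not a constraint on the value $|g_k(z)|$. Since $k\cdot 2^{\ell-k}$ can be as large as $k\cdot 2^{n-k}$, the numerator can overflow $2^n$ for $j$ near $2^n$; the number of odd $j$ lost at level $\ell$ is $\approx k\cdot 2^{\ell-k-1}$, and summing over $\ell=k+1,\ldots,n$ gives a total loss of $\approx k\cdot 2^{n-k}\le 2^{n-1}$. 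This is $O(2^n)$, hence an $O(1/n)$ fraction of $|F_n|$ and negligible against the dominant loss $k\cdot 2^n$ from levels $\ell<k$; so the richness computation and the final bound stand. On the other hand, the worry you flagged about levels $\ell<k$ is unfounded: the level-by-level count (level $0$ keeps $\approx 2^{n-k+1}$ of $\approx 2^{n+1}$ integers, level $\ell\ge 1$ keeps $\approx 2^{n-(k-\ell)}$ of $\approx 2^n$ odd numerators) sums cleanly to a loss of $\approx k\cdot 2^n$, exactly as you predicted.
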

The construction is based on the construction of explicit F\o{}lner sequences for $\aff 1\R$ acting on $\R$ \cite{greenleaf1969invariant,willson2006folner}.
We thank John Mackay for suggesting a simpler way of writing our original set $Y$.
\begin{proof}[Proof of Theorem~\ref{thm:28}]
Fix $0<\epsilon<1$ such that $2\epsilon \leq 1-\alpha$.
Fix an integer $N > 0$ so that $N^{N+1} \geq N_0$.

Define a set of positive integers $Y\subseteq\Z$ of size $N^{N+1}$ by
% \[
% Y:= \left\{N^k \left( N+\sum_{j=0}^{N-1}a_jN^{-j} \right) \colon 0\leq k < N, 0\leq a_j < N \right\}.
% \]
% Since $a_0,\ldots,a_{N-1} < N$, we have
% \[
% \sum_{j=0}^{N-1}a_jN^{-j} \leq N \left( 1-\frac 1{N^{N}} \right),
% \]
% hence every choice of $k$ and $a_0,\ldots,a_{N-1}$ yields a distinct element of $Y$, and $Y$ has cardinality $N\cdot N^{N}$, as claimed.
% \[
% Y:=\left\{
%   N^k\left(N + \frac{x}{N^{N-1}} \right)\colon x,k\in\Z, 0\leq k < N,
%   0\leq k < N^N-1
% \right\}.
% \]
% Since $N+xN^{-N+1}\in [N,2N)$ for all $0\leq x<N^N-1$, each choice of $k$ and $x$ yields a distinct element of $Y$; hence $|Y|=N^{N+1}$.
\[
Y:=\bigcup_{k=0}^{N-1} N^k\cdot \left( N^N + [0,N^N)\cap \Z \right).
\]
Since $|[0,N^N)\cap \Z|=N^N$ and the terms of the union are disjoint, we have $|Y|=N^{N+1}$.

Let $L$ denote the set of transformations defined by $\ell_k(x):=N^kx+k N^{N-1}$, where $k$ ranges over integers satisfying $0<k<\epsilon N$.
We make two claims.
\begin{claim}
\label{claim:1}
  The transformations in $L$ are in general position.
\end{claim}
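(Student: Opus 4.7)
The plan is to verify the two defining conditions of general position directly: no two lines in $L$ are parallel, and no three are concurrent. For parallelism, the slope of $\ell_k$ is $N^k$ and the map $k \mapsto N^k$ is strictly increasing (for the integer $N \geq 2$ guaranteed by the choice $N^{N+1} \geq N_0$), so distinct indices give non-parallel lines.

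For non-concurrence, I would argue by contradiction. Suppose $\ell_k$, $\ell_{k'}$, $\ell_{k''}$ with $0 < k < k' < k'' < \epsilon N$ share a common point $(x_0, y_0)$. Subtracting the equations $y_0 = N^k x_0 + k N^{N-1}$ and $y_0 = N^{k'} x_0 + k' N^{N-1}$ gives an expression for $x_0$, and similarly for the pair $(k, k'')$. Equating the two expressions (equivalently, setting the $3 \times 3$ determinant encoding concurrency equal to zero) and factoring out the common power $N^{N-1} \cdot N^k$ reduces the concurrency condition, after substituting $a := k' - k \geq 1$ and $b := k'' - k' \geq 1$, to the Diophantine equation
\[
a N^a (N^b - 1) = b (N^a - 1).
\]

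To finish, I would rule this equation out using the elementary inequality $N^b - 1 \geq b$, valid for all integers $N \geq 2$ and $b \geq 1$ (immediate from Bernoulli's inequality or induction). Combined with $a \geq 1$ and the strict bound $N^a > N^a - 1$, this yields
\[
a N^a (N^b - 1) \;\geq\; a N^a \cdot b \;\geq\; b N^a \;>\; b (N^a - 1),
\]
contradicting the required equality. The only place that requires care is the boundary case $a = b = 1$, $N = 2$, where $N^b - 1 = b$, but the final strict inequality $bN^a > b(N^a - 1)$ still holds, so no boundary case survives.

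The main obstacle is purely bookkeeping: cleanly setting up the concurrency equation and isolating the factor $a N^a(N^b - 1) - b(N^a - 1)$ whose sign is then easy to pin down. The construction parameters already guarantee that $N$ is an integer at least $2$ (in fact $N > 1/\epsilon$, so that the range $0 < k < \epsilon N$ contains an integer at all), which is precisely what the exponential inequality $N^b \geq b + 1$ requires.
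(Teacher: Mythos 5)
Your proof is correct, and it takes a route that is genuinely a little different from the paper's. Both start from the same determinant
\[
\det\begin{pmatrix}1&1&1\\ N^i&N^j&N^k\\ i&j&k\end{pmatrix}
=(k-i)N^j-(j-i)N^k-(k-j)N^i,
\]
but where the paper bounds this quantity directly as strictly negative using the chain
\[
(k-i)N^j<\epsilon N^{j+1}\le\epsilon N^k\le N^k\le(j-i)N^k,
\]
which leans explicitly on the constraints $k-i<\epsilon N$, $j+1\le k$, and $\epsilon\le1$, you instead substitute $a=j-i$, $b=k-j$, factor out $N^i$, and reduce the vanishing condition to the Diophantine equation $aN^a(N^b-1)=b(N^a-1)$, which you then rule out with Bernoulli's inequality $N^b-1\ge b$. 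Your version never uses the upper cap $k<\epsilon N$ at all: it shows that \emph{any} three lines $\ell_k(x)=N^kx+kN^{N-1}$ with distinct positive integer indices and $N\ge2$ are non-concurrent, a mild but pleasant strengthening. The paper's argument is shorter by a line or two because it exploits the specific range; yours is cleaner conceptually and slightly more general. One small stylistic note: the ``boundary case'' $a=b=1$, $N=2$ you flag is not really a boundary case for your inequality chain, since the strictness comes from the last step $bN^a>b(N^a-1)$, which is unconditionally strict for $b\ge1$; you could omit that remark.
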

\begin{claim}
  \label{claim:2}
For all $\ell$ in $L$, we have
\[
|\ell(Y)\setminus Y| \leq 2\epsilon |Y|,
\]
hence $|\ell\cap (Y\times Y)|\geq \alpha|Y|$ by our choice of $\epsilon$.
\end{claim}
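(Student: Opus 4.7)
The plan is to reduce the second clause of the claim ($|\ell \cap (Y\times Y)| \geq \alpha |Y|$) to the first ($|\ell(Y)\setminus Y|\leq 2\epsilon|Y|$) using the fact that $\ell_k$ is a bijection of $\F$, so $|\ell\cap(Y\times Y)|=|Y|-|\ell_k(Y)\setminus Y|$, and then invoking the choice $2\epsilon\leq 1-\alpha$. The heart of the argument is a direct combinatorial analysis of $\ell_k(Y)\setminus Y$.

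Using the ``digit'' parametrization $y=N^{j+N}+N^j m$ with $0\leq j\leq N-1$ and $0\leq m<N^N$, I would write
\[
\ell_k(y) = N^{k+j+N} + N^{k+j} m + k N^{N-1}
\]
and case-split on $j' := k+j$. If $j'\geq N$, then $\ell_k(y) \geq N^{2N}>\max Y$, so all $kN^N$ of these $y$'s are bad. If $j'\leq N-1$, then $N^{j'}$ divides $kN^{N-1}$, and regrouping gives
\[
\ell_k(y) = N^{j'+N} + N^{j'}\bigl(m + kN^{N-1-j'}\bigr),
\]
which shows that $\ell_k(y)$ lies in the $j'$-th piece of $Y$ precisely when $m + kN^{N-1-j'} < N^N$; in this case $\ell_k(y)\in Y$, and otherwise $\ell_k(y)$ overshoots the $j'$-th piece. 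The bad $m$'s for this $j$ are therefore the top $kN^{N-1-k-j}$ values in $[0,N^N)$.

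The delicate point is ruling out that in the $j'\leq N-1$ case an ``overshooting'' $\ell_k(y)$ accidentally lands in the $(j'+1)$-st or later piece. This is where I need that $N$ is large (specifically $N\geq 3$): the gap between consecutive pieces has length $(N-2)N^{j'+N}$, which easily swallows the total shift $kN^{N-1}<N^N$ coming from $\ell_k$, so $\ell_k(y)<N^{j'+1+N}$ and $\ell_k(y)$ cannot reach the next piece. Combining the two cases and summing the geometric series $\sum_{j=0}^{N-1-k}kN^{N-1-k-j}\leq 2kN^{N-k-1}$ yields
\[
|\ell_k(Y)\setminus Y|\leq kN^N + 2kN^{N-k-1} \leq 2kN^N < 2\epsilon N^{N+1} = 2\epsilon|Y|,
\]
using $k<\epsilon N$ and $|Y|=N^{N+1}$, which is the desired bound.
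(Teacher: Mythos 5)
Your proof is essentially the paper's: you use the same parametrization of $Y$ (your $(j,m)$ is the paper's $(k,x)$, and your line index $k$ is the paper's $b$), the same regrouping $\ell_k(y)=N^{j'}\bigl(N^N+m+kN^{N-1-j'}\bigr)$ with $j'=j+k$, and the same case split on $j'\geq N$ versus $j'<N$; your geometric-series summation for case (2) is a mild refinement of the paper's cruder bound $N\cdot\epsilon N^N$. The only divergence, your ``delicate point'' about overshooting into the $(j'+1)$-st block, is unnecessary: since you only need an \emph{upper} bound on $|\ell_k(Y)\setminus Y|$, it is harmless to overcount the bad $y$'s, and the paper accordingly omits this check.
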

The proof is complete assuming these claims, since $|L|\approx \epsilon N \approx \epsilon \log|Y|/\log\log |Y|$.

To prove Claim~\ref{claim:1}, it suffices to show that if $0<i<j<k<\epsilon N$, then
\begin{equation}
  \label{eq:68}
  \det
\begin{pmatrix}
  1 & 1 & 1 \\
  N^i & N^j & N^k \\
  i & j & k\\
\end{pmatrix}
\not=0.
\end{equation}
(We have factored the common term $N^{N-1}$ out of the bottom row.)
The left-hand side of (\ref{eq:68}) is
\[
(k-i)N^j - (j-i)N^k - (k-j)N^i < (k-i)N^j - (j-i)N^k,
\]
which is strictly less than zero:
\[
(k-i)N^j < \epsilon N^{j+1} \leq \epsilon N^k \leq N^k \leq (j-i)N^k.
\]

To prove Claim~\ref{claim:2}, fix an element $\ell_b$ in $L$ and consider its action on a general element $y=N^k(N^N+x)$ of $Y$, where $x$ is an integer in $[0,N^N)$:
\[
%\ell_b(y) = N^{b+k} \left( N + \frac{x}{N^{N-1}}\right)+b= N^{b+k} \left( N + \frac{x}{N^{N-1}}+\frac{b}{N^{b+k}}\right).
\ell_b(y)= N^{b+k}(N^N + x) + bN^{N-1} = N^{b+k}(N^N+x + bN^{N-1-b-k}).
\]
There are two cases where $\ell_b(y)\not\in Y$:
\begin{enumerate}
\item $b+k \geq N$,
\item $b+k < N$, but $x+N^{N-1-b-k}b\geq N$.
\end{enumerate}
At most $b$ values of $k$ satisfy (1), hence at most $bN^{N}\leq \epsilon |Y|$ elements of $Y$ fall into the first case.
At most $N^{N-1-b-k}b\leq \epsilon N^N$ values of $x$ satisfy (2), hence at most $N\cdot \epsilon N^{N}\leq \epsilon|Y|$ elements of $Y$ fall into the second case.
\end{proof}

\subsection{Quantitative lower bounds over $\F_p$}
\label{sec:quant-lower-bounds-Fp}

In this section, we prove the lower bound in Theorem~\ref{thm:21} for $\F=\F_p$.
\begin{thm}
  \label{thm:15}
For any prime $p$, any $0<\alpha<1$, any $\epsilon >0$, and any integer $m$ satisfying $1\ll_\epsilon m \ll p^{1-\epsilon}$, there exists a subset $Y\subseteq\F_p$ with $|Y|\approx_\alpha m$ and a set of lines $S$ in general position that are $\alpha$-rich in $Y\times Y$ such that
\[
\log |S| \approx_\alpha \sqrt{\frac{\log |Y|}{\log\log |Y|}}.
\]
\end{thm}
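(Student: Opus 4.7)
The plan is to adapt the $\Z$-construction of Theorem~\ref{thm:28} to $\F_p$ by following Klawe's quantitative version~\cite{klawe1981non-existence, klawe1984limitations} of the Lubotzky--Weiss theorem~\cite{lubotzky1993groups}. Klawe shows that, although $\aff{1}{\F_p} \actson \F_p$ admits no uniform family of bounded-degree Schreier expanders, for any target $k$ one can construct large subsets of $\F_p$ that are $(1-\alpha)$-invariant under $k$ carefully chosen affine maps; the associated size-versus-generator tradeoff inverts to exactly the claimed bound $\log|S| \approx_\alpha \sqrt{\log|Y|/\log\log|Y|}$.

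Concretely, fix $k$ so that Klawe's construction yields a near-invariant set $Y \subseteq \F_p$ of size $\approx m$ under $k$ prescribed affine transformations $g_1, \ldots, g_k \in \aff{1}{\F_p}$. The set $Y$ is modelled on the nested blocks $\bigcup_j N^j \cdot (N^N + [0, N^N))$ of Theorem~\ref{thm:28}, but with $N^j$ replaced by powers of a generator $r \in \F_p^\ast$ of large multiplicative order; the hypothesis $m \ll p^{1-\epsilon}$ ensures enough room in $\F_p^\ast$ to carry out sufficiently many iterations without wrap-around. The depth limitation imposed by $|\F_p^\ast| = p-1$ is precisely what costs the square root relative to $\Z$. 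Each line $\ell_j = \{(x, g_j(x)) : x \in \F_p\}$ then satisfies
\[
|\ell_j \cap (Y \times Y)| = |\{x \in Y : g_j(x) \in Y\}| = |Y \cap g_j^{-1}(Y)| \geq \alpha|Y|,
\]
so $\alpha$-richness is automatic from the invariance.

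The hard part will be verifying general position of $\ell_1, \ldots, \ell_k$. Distinctness of slopes is free once the exponents appearing in the multipliers $r^{a_j}$ are chosen distinct. Ruling out three concurrent lines reduces, exactly as in the proof of Theorem~\ref{thm:28}, to the non-vanishing of a $3 \times 3$ determinant analogous to~(\ref{eq:68}) but now computed in $\F_p$. This can be arranged either deterministically---by selecting the exponents $a_j$ and shifts $b_j \in \F_p$ so that the corresponding integer lift of the determinant is nonzero and of absolute value less than $p$---or probabilistically, by randomising the $b_j$ within a range compatible with $\alpha$-invariance of $Y$ and applying a union bound over the $\binom{k}{3}$ triples. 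This step is the most delicate, since the choice of shifts is constrained by the need to simultaneously preserve $\alpha$-invariance of $Y$ under each $g_j$; once it is in place the theorem follows from the parameter choice matching Klawe's tradeoff.
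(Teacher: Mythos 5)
Your proposal correctly identifies Klawe's construction as the engine, and your framing of richness via near-invariance of $Y$ under the affine maps (so that $|Y \cap g_j^{-1}(Y)| \geq \alpha|Y|$) is exactly how the paper proceeds. However, the description of the construction itself and the explanation of where the square root comes from both contain errors that would block a correct proof.

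The central gap is your claim that $Y$ is built ``with $N^j$ replaced by powers of a generator $r \in \F_p^\ast$.'' Klawe's construction is not a one--parameter geometric progression: it uses $Q$-powers $q_1^{\alpha_1}\cdots q_k^{\alpha_k}$ for a set $Q$ of $k$ distinct primes, with each exponent $\alpha_i \leq s/4k$, and the set $Y$ lives in $\Z/p\Z$ with an additive block structure that wraps around. The multi-prime structure is what lets you take $\approx (s/4k)^k$ distinct slopes, i.e., exponentially many in $k$; a single generator $r$ would only give $\sim s$ slopes, and the counting would then produce $|S| \approx \log|Y|/\log\log|Y|$ lines (the $\Z$ rate of Theorem~\ref{thm:28}), not $\log|S| \approx \sqrt{\log|Y|/\log\log|Y|}$. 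Relatedly, your heuristic ``the depth limitation imposed by $|\F_p^\ast| = p - 1$ is precisely what costs the square root relative to $\Z$'' has the comparison backwards: the $\F_p$ construction actually yields \emph{more} lines than the $\Z$ construction, not fewer. The exponent $\sqrt{\log|Y|/\log\log|Y|}$ comes from optimizing $k$ against $s$ via the Prime Number Theorem and Mertens' formula (Lemma~\ref{lem:13}): with $Q$ the primes up to $x$, one has $k \approx x/\log x$, $q \approx e^x$, $|S| \gtrsim e^k$, and $\log|Y| \approx x^2/\log x$, whence $\log|S| \approx x/\log x \approx \sqrt{\log|Y|/\log\log|Y|}$. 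The constraint $m \ll p^{1-\epsilon}$ is needed to satisfy $p = Mq^s + r$ with $M > q^s L$, not to control a multiplicative order.

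On general position: the paper (Corollary~\ref{cor:2}) does not try to make a determinant's integer lift small and nonzero; with slopes as large as $q^{s/4k}$ and intercepts up to $\phi(q)L/8q$ this bound on the lift would not hold. Instead it uses a greedy argument: slopes are automatically distinct (so no two lines are parallel), and the intercepts are chosen one at a time from a pool of $\geq (s/4k)^{2k} > \binom{|S|}{2}$ residues, each new line avoiding the intersection points of all previously chosen pairs. Your probabilistic variant would also work, but the deterministic ``integer lift'' route you suggest is not available here.
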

% For simplicity, we will prove Theorem~\ref{thm:15} for $\alpha=1/2$, which implies the theorem for any $\alpha < 1/2$.
% The same proof applies for any value of $\alpha$, with constants depending on $\alpha$.

The proof of Theorem~\ref{thm:15} is based on a construction of Klawe~\cite{klawe1981non-existence, klawe1984limitations}, which proves explicitly that Schreier graphs of $\aff 1{\Z/n\Z}\actson \Z/n\Z$ cannot be made into an expander family of constant degree.
(Lubotzky \cite{lubotzky1994discrete} gives a qualitative proof of this fact using the method of \cite{lubotzky1993groups}.)

Before we state Klawe's theorem, we need some notation.
Let $Q=\{q_1,\ldots,q_k\}$ denote a set of $k$ primes.
We say that $n$ is a \emph{$Q$-power}\/ if $n=q_1^{\alpha_1}\cdots q_k^{\alpha_k}$ and in this case, we write $\mu(n)=\alpha_1+\cdots+\alpha_k$.
We use $\phi(n)$ to denote Euler's totient function; that is, $\phi(n)$ the number of positive integers less than and relatively prime to $n$.
\begin{thm}[Klawe]
  \label{thm:16}
Let $Q=\{q_1,\ldots,q_k\}$ be a set of $k$ prime numbers and set $q=q_1\cdots q_k$.
Let $N,M,L,r,$ and $s$ be positive integers such that $N=Mq^s+r$, $0\leq r < q,$ and $L<M/q^s$.

Then there exists a subset $Y\subseteq \Z/N\Z$ such that
\begin{equation}
  \label{eq:23}
  |Y|=s^k L\phi(q)q^{s-1}
\end{equation}
and for all positive integers $0< a,b<N$ such that $a$ is a $Q$-power
\begin{equation}
  \label{eq:24}
  |(aY+b)\setminus Y| \leq \left( \frac{\mu(a)}s+\frac{ar+b}L\,\frac{q}{\phi(q)} \right)|Y|.
\end{equation}
\end{thm}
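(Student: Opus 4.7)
The plan is to build $Y$ as a disjoint union of ``multiplicative layers'' in $\Z/N\Z$ and split $|(aY+b)\setminus Y|$ into a \emph{multiplicative} and an \emph{additive} boundary contribution. For each multi-index $\alpha=(\alpha_1,\ldots,\alpha_k)\in\{0,\ldots,s-1\}^k$ set $m_\alpha=q_1^{\alpha_1}\cdots q_k^{\alpha_k}$, so $m_\alpha\leq q^{s-1}$; let
\[
U=\{u\in\Z\colon 0\leq u<Lq^s,\ \gcd(u,q)=1\}, \qquad Y=\bigsqcup_{\alpha\in\{0,\ldots,s-1\}^k} m_\alpha U\ \subseteq\ \Z/N\Z.
\]
Each block of $q$ consecutive integers contains exactly $\phi(q)$ integers coprime to $q$, so $|U|=L\phi(q)q^{s-1}$. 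The layers $m_\alpha U$ are pairwise disjoint since any element of $m_\alpha U$ has $q_i$-adic valuation exactly $\alpha_i$ (the factor $u$ is coprime to $q$). The assumption $L<M/q^s$ forces $m_\alpha u<q^{s-1}\cdot Lq^s<Mq^s\leq N$, so $Y$ embeds in $\Z/N\Z$ with no wraparound and $|Y|=s^kL\phi(q)q^{s-1}$, giving (\ref{eq:23}).

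For (\ref{eq:24}), write $a=q_1^{\beta_1}\cdots q_k^{\beta_k}$ with $\mu(a)=\sum_i\beta_i$ and classify the bad elements of $Y$ by which layer they come from. A union bound shows that the number of $\alpha$ with $\alpha_i+\beta_i\geq s$ for some $i$ is at most $\sum_i\beta_i s^{k-1}=\mu(a)s^{k-1}$, so these ``multiplicatively bad'' layers contribute at most $\mu(a)s^{k-1}|U|=(\mu(a)/s)|Y|$ to the boundary. For each remaining $\alpha$, we have $\alpha_i+\beta_i<s$ for all $i$, hence $am_\alpha u=m_{\alpha+\beta}u<q^{s-1}Lq^s\leq N$ and $(am_\alpha u+b)\bmod N$ equals either $m_{\alpha+\beta}u+b$ or $m_{\alpha+\beta}u+b-N$ (since $b<N$ there is at most one wraparound). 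Whether the result lies in $m_{\alpha+\beta}U$ reduces to whether a shift of $u$ by an amount of size at most $(ar+b)/m_{\alpha+\beta}$ still lies in $U$: the term $b$ is the direct translation, while $ar$ arises because $N\equiv r\pmod{q^s}$, so one wrap past $N$ acts at the $q^s$-scale as a shift of $-r$. The number of $u\in U$ whose shifted value escapes $U$---either leaving the interval $[0,Lq^s)$ or breaking coprimality with $q$---is at most $\frac{ar+b}{m_{\alpha+\beta}Lq^s}\cdot\frac{q}{\phi(q)}|U|$, where $q/\phi(q)$ is the inverse density of $U$ in its ambient interval. Summing over the $s^k$ layers and using $m_{\alpha+\beta}\geq 1$ gives the second term $\frac{ar+b}{L}\cdot\frac{q}{\phi(q)}|Y|$; adding the two contributions proves (\ref{eq:24}).

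The hardest part will be the additive-boundary bookkeeping. One must track exactly how the modular reduction of $am_\alpha u+b$ shifts the inner index $u$ (each wrap past $N=Mq^s+r$ produces an $r$-sized shift at the $q^s$-scale, together with an additional $Mq^s/m_{\alpha+\beta}$ drift that is absorbed because the layer is invariant under translation by $q^s$-multiples), and then convert counts of ``$u$ near the boundary of $[0,Lq^s)$'' into counts of bad $u$ in the coprime-to-$q$ set $U$, where the density correction $q/\phi(q)$ arises. The multiplicative count is immediate from the layered decomposition; aligning the additive constants to yield the clean bound $(ar+b)/L\cdot q/\phi(q)$ is where the work lies.
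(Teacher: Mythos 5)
The paper does not actually prove Theorem~\ref{thm:16}; it is quoted from Klawe's papers, with only the remark that the construction resembles that of Theorem~\ref{thm:28} but uses wrap-around. So your proposal must stand on its own, and unfortunately the construction you propose does not satisfy inequality~(\ref{eq:24}).

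Your set $Y=\bigsqcup_{\alpha}m_\alpha U$ with $U=\{u:0\leq u<Lq^s,\ \gcd(u,q)=1\}$ does have the right cardinality, the layers are genuinely disjoint, and the $\mu(a)/s$ multiplicative-boundary estimate is correct. The gap is in the additive part. You assert that whether $(am_\alpha u+b)\bmod N$ lies in $m_{\alpha+\beta}U$ ``reduces to whether a shift of $u$ by an amount of size at most $(ar+b)/m_{\alpha+\beta}$ still lies in $U$.'' That is only true when $m_{\alpha+\beta}$ divides $b$ (and divides the wraparound corrections). For generic $b$, the integer $m_{\alpha+\beta}u+b$ is not of the form $m_{\alpha+\beta}u'$ at all: adding $b$ scrambles the $q_i$-adic valuation profile, so the image typically lands in a layer with the wrong index $\gamma\neq\alpha+\beta$, or outside $Y$ entirely. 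The coprimality condition defining $U$ is precisely the kind of structure that is \emph{not} stable under small additive shifts.

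Here is a concrete counterexample. Take $k=1$, $Q=\{2\}$, $q=2$, $s=2$, $N=4M$ (so $r=0$), and $L<M/4$ large. Then $U=\{1,3,\ldots,4L-1\}$ and $Y=U\sqcup 2U$, $|Y|=4L$, matching (\ref{eq:23}). Test $a=1$, $b=1$. For $u\in U$, $u+1$ is even, and $u+1\in Y$ iff $u+1\in 2U$ iff $u\equiv 1\pmod 4$; so exactly half of $U$ escapes. For $2u\in 2U$, $2u+1$ is odd, and $2u+1\in Y$ iff $2u+1\in U$ iff $u<2L$; again exactly half of $2U$ escapes. Thus $|(Y+1)\setminus Y|=2L=\tfrac12|Y|$, while (\ref{eq:24}) with $\mu(1)=0$, $r=0$, $b=1$ demands $|(Y+1)\setminus Y|\leq\tfrac{2}{L}|Y|$, which is violated for every $L>4$.

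So the construction must be genuinely different: the factor $\phi(q)/q$ in $|Y|$ and $q/\phi(q)$ in the bound cannot come from a coprime-filtered inner set, because such a set has no small additive boundary. In Theorem~\ref{thm:28}, the analogous issue is sidestepped because the additive offsets $kN^{N-1}$ are carefully chosen multiples of the layer scales; Klawe's theorem instead requires stability under \emph{arbitrary} small $b$, which forces the layers to be translates of honest arithmetic-progression-like blocks rather than coprime sets, with the $\phi(q)$ appearing for a different reason. Your verification of (\ref{eq:23}) and your treatment of the $\mu(a)/s$ term are fine, but the proof of (\ref{eq:24}) needs a different $Y$.
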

The proof of Theorem~\ref{thm:16} uses a construction similar to that of Theorem~\ref{thm:28}, but uses wrap-around to allow a much larger set of ``slopes'' $a$.

We use the following corollary of Theorem~\ref{thm:16} to prove Theorem~\ref{thm:15}.
\begin{cor}
  \label{cor:2}
Let $Q=\{q_1,\ldots,q_k\}$ be a set of $k$ prime numbers and set $q=q_1\cdots q_k$.
Let $p$ be a prime and let $M,L,r,$ and $s$ be positive integers such that $p=Mq^s+r$, $0\leq r < q$, and $L<M/q^s$.

If
\begin{equation}
  \label{eq:25}
  L \geq\frac{8q}{\phi(q)}\,\max \left( q^{(1+\frac 1{4k})s}, \left( \frac s{4k} \right)^{2k} \right),
\end{equation}
then there exists a subset $Y\subseteq\F_p$ satisfying (\ref{eq:23}) and a set $S$ of affine transformations in general position such that $|S|\geq (s/4k)^k$ and $|\ell(Y)\setminus Y|\leq\frac 12|Y|$ for all transformations $\ell$ in $S$.
\end{cor}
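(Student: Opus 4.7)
The plan is to apply Theorem~\ref{thm:16} directly with $N=p$ to obtain $Y\subseteq\F_p$ satisfying (\ref{eq:23}) and the displacement bound (\ref{eq:24}), then build $S$ by drawing slopes from a box of $Q$-powers and choosing intercepts greedily to enforce general position.

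For the slopes, let $\mathcal{A}=\{q_1^{\alpha_1}\cdots q_k^{\alpha_k}:0\leq\alpha_i\leq\lfloor s/(4k)\rfloor\}$. Since the $q_i$ are distinct primes, distinct exponent tuples yield distinct integers; hence $|\mathcal{A}|\geq(s/(4k))^k$ and any two lines with slopes in $\mathcal{A}$ are automatically non-parallel. Each $a\in\mathcal{A}$ satisfies $\mu(a)\leq k\cdot s/(4k)=s/4$ and $a\leq q^{s/(4k)}<p$, so the first summand in (\ref{eq:24}) is already at most $1/4$. To make the second summand at most $1/4$, I would restrict the intercept to $1\leq b\leq L\phi(q)/(4q)-ar$. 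Using $ar\leq q^{s/(4k)+1}$ together with the first branch of (\ref{eq:25})---which gives $L\phi(q)/(4q)\geq 2q^{(1+1/(4k))s}$, dominating $ar$ for $s\geq 1$---this admissible interval has length at least $L\phi(q)/(8q)$, and by the second branch of (\ref{eq:25}) its length is at least $(s/(4k))^{2k}$. For any such $b$, (\ref{eq:24}) then gives $|\ell(Y)\setminus Y|\leq|Y|/2$.

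To enforce general position, fix $\mathcal{A}'\subseteq\mathcal{A}$ of size $n:=\lceil(s/(4k))^k\rceil$, enumerate it as $a_1,\ldots,a_n$, and pick intercepts $b_1,\ldots,b_n$ sequentially. At step $t$, any pair of earlier slopes $\{a_i,a_j\}$ with $i<j<t$ determines a unique forbidden value of $b_t$---namely the one for which the three lines of slopes $a_i,a_j,a_t$ are concurrent---so at most $\binom{t-1}{2}<n^2/2$ intercepts are ruled out. As the admissible interval has length at least $(s/(4k))^{2k}$ while $n^2/2$ is of the same order, the greedy step succeeds (the very small cases $(s/(4k))^k\leq O(1)$ are handled by direct inspection). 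The resulting set $S$ consists of $n\geq(s/(4k))^k$ affine transformations in general position, each satisfying $|\ell(Y)\setminus Y|\leq|Y|/2$ by construction.

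The main obstacle is really the bookkeeping in hypothesis (\ref{eq:25}): its first branch is calibrated so that $L\phi(q)/(4q)$ comfortably exceeds the maximum of $ar$ over $a\in\mathcal{A}$, so that half of the raw interval remains available for the intercepts; its second branch is the combinatorial budget needed by the greedy avoidance of concurrencies. Once these two demands are balanced, no substantive new ingredient beyond Theorem~\ref{thm:16} is required.
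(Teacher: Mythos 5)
Your proof is correct and follows essentially the same approach as the paper's: apply Theorem~\ref{thm:16} with $N=p$, take slopes from the box of $Q$-powers with exponents at most $s/(4k)$ (so $\mu(a)\leq s/4$), use~(\ref{eq:25}) to guarantee that $ar+b\leq\phi(q)L/(4q)$ for an interval of intercepts of length at least $\phi(q)L/(8q)\geq(s/(4k))^{2k}$, and choose intercepts greedily to avoid the $\binom{t-1}{2}$ concurrency constraints at each step. The only cosmetic differences are that you use the sharper bound $ar\leq q^{s/(4k)+1}$ coming from the hypothesis $r<q$ (the paper's displayed $q^{(1+1/(4k))s}$ is a weaker bound), and you explicitly flag the need to check the borderline cases where $(s/(4k))^k$ is $O(1)$.
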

\begin{proof}
  Applying Theorem~\ref{thm:16} with $N=p$ yields a set $Y\subseteq\F_p$ such that (\ref{eq:23}) holds and for all positive integers $a$ and $b$ such that $a$ is a $Q$-power (\ref{eq:24}) holds.

We wish to choose a collection of pairs $(a,b)$ such that the corresponding set of lines $\ell(x)=ax+b$ are in general position and satisfy
\begin{equation}
  \label{eq:44}
  \frac{\mu(a)}s+\frac{ar+b}L\,\frac{q}{\phi(q)} \leq \frac 12.
\end{equation}
First, we will find a large number of integers $a,b$ satisfying
\begin{equation}
  \label{eq:45}
\mu(a)\leq \frac s4
\end{equation}
and
\begin{equation}
  \label{eq:46}
  ar+b \leq \frac{\phi(q)L}{4q}.
\end{equation}

Let $A$ denote the set of positive integers of the form $a=q_1^{\alpha_1}\cdots q_k^{\alpha_k}$ where $0\leq \alpha_i\leq s/4k$ for $i=1,\ldots,k$.
Then each element $a$ in $A$ satisfies (\ref{eq:45}) and further $1\leq a\leq q^{s/4k}$.

If $a\in A$ and $b$ is a positive integer, then
\[
ar+b < q^{(1+\frac 1{4k})s} + b.
\]
By (\ref{eq:25}), we have $\phi(q)L/4q\geq 2q^{(1+\frac 1{4k})s}$, so (\ref{eq:46}) is satisfied for all $0\leq b\leq \phi(q)L/8q$.

Note that $q^{s/4k}<p$ and $\phi(q)L/8q<p$, so $a$ and $b$ are unique modulo $p$.

To form our set of lines $L$, we will choose slopes $a$ from $A$ one at a time, choosing $y$-intercepts $0\leq b\leq \phi(q)L/8q$ so that the line $\ell(x)=ax+b$ intersects each previous line in a distinct point; this guarantees that no three lines in $L$ are incident to a common point.
Since all of the lines in $L$ have distinct slopes, the resulting set of lines $L$ will be in general position.

If we have chosen $x$ lines by this process, then we must avoid ${x\choose 2}$ points; this is always possible if we have more than ${x\choose 2}$ choices for $b$.
By (\ref{eq:25}),
\[
\#(\mbox{choices for for $b$}) \geq \frac{\phi(q)L}{8q} \geq \left( \frac s{4k} \right)^{2k} > {x\choose 2}\quad\mbox{for all $0\leq x\leq |A|$}.
\]
\end{proof}

\newcommand{\merr}{\left( 1+O\left( \frac 1{\log x} \right) \right)}
We want to take $k$ as large as possible relative to $q$; the following lemma gives $k\approx \log q/\log\log q$.
\begin{lem}
  \label{lem:13}
Given $x>0$, let $Q=\{q_1,\ldots,q_k\}$ denote the set of primes less than or equal to $x$, and let $q=q_1\cdots q_k$.
We have the following estimates:
\begin{equation}
  \label{eq:26}
  k=|Q|=\frac{x}{\log x} \merr,
\end{equation}
\begin{equation}
  \label{eq:27}
  q=e^{x\merr},
\end{equation}
and
\begin{equation}
  \label{eq:28}
  \frac{\phi(q)}q = \frac{e^{-\gamma}}{\log x}\merr,
\end{equation}
where $\gamma$ is Euler's constant.
\end{lem}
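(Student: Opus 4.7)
The plan is to recognize that all three estimates are classical results of analytic number theory and to derive them from standard prime-counting asymptotics. I would structure the proof as three short invocations, in the order (\ref{eq:26}), (\ref{eq:27}), (\ref{eq:28}), because each successive estimate can be read off from a standard theorem applied to the primes up to $x$.

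For (\ref{eq:26}), I would simply quote the Prime Number Theorem in the form $\pi(x)=x/\log x\cdot\merr$. Since $k=|Q|=\pi(x)$ by definition, this gives the claim directly. For (\ref{eq:27}), I would observe that $\log q=\sum_{p\leq x}\log p=\theta(x)$, and then invoke the PNT in Chebyshev's form $\theta(x)=x\merr$; exponentiating gives the stated asymptotic for $q$. For (\ref{eq:28}), I would write
\[
\frac{\phi(q)}{q}=\prod_{i=1}^{k}\left(1-\frac{1}{q_i}\right)=\prod_{p\leq x}\left(1-\frac{1}{p}\right)
\]
and apply Mertens' third theorem, which states precisely that this product equals $e^{-\gamma}/\log x\cdot\merr$.

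There is no real obstacle here beyond citing the correct references: PNT with the quantitative error term $O(1/\log x)$ (which follows from any effective form such as de la Vall\'ee Poussin's estimate, but even the elementary Chebyshev-style bounds suffice since the desired error term is only $O(1/\log x)$), together with Mertens' theorem. I would cite a standard text such as Tenenbaum's \emph{Introduction to Analytic and Probabilistic Number Theory} or Montgomery--Vaughan. The only mild care needed is to verify that the $\merr$ error terms combine correctly when passing from $\theta(x)$ to $q=e^{\theta(x)}$, but this is routine: $e^{x(1+O(1/\log x))}=e^{x}\cdot e^{O(x/\log x)}$, which is exactly the form (\ref{eq:27}) asserts when written as $q=e^{x\merr}$.
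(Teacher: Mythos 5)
Your proof follows exactly the paper's approach: the Prime Number Theorem for (\ref{eq:26}), the asymptotic for Chebyshev's function $\vartheta(x)=\sum_{p\le x}\log p$ for (\ref{eq:27}), and Mertens' third theorem for (\ref{eq:28}). One small caution on your parenthetical aside: elementary Chebyshev-style bounds do \emph{not} suffice here, since they only yield $\pi(x)\asymp x/\log x$ with distinct multiplicative constants, not the asymptotic $\pi(x)=\frac{x}{\log x}\bigl(1+O(1/\log x)\bigr)$; that error term really does require the Prime Number Theorem itself.
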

\begin{proof}
  Equation~(\ref{eq:26}) is the Prime Number Theorem.
Equation~(\ref{eq:27}) follows from asymptotic estimates for Chebyshev's function $\vartheta(x)$:
\[
\vartheta(x)=\sum_{p\leq x}\log p = x\merr.
\]
Equation~(\ref{eq:28}) is Merten's formula \cite[Equation (2.16)]{iwaniec2004analytic}.
\end{proof}

For simplicity, we will prove Theorem~\ref{thm:15} for the case $\alpha=\frac 12$; the general case follows in the same way, with implicit constants depending on $\alpha$.
\begin{proof}[Proof of Theorem~\ref{thm:15}]
  Let $x$ be a positive real number and let $Q=\{q_1,\ldots,q_k\}$ denote the set of primes less than or equal to $x$.
Let $q=q_1\cdots q_k$ and let $s=\ceil{4e\cdot k}$.
For convenience, let $\delta=1/4k$.

Set
\[
L = \frac{8q}{\phi(q)}q^{(1+\delta)s}.
\]
Condition~(\ref{eq:25}) of Corollary~\ref{cor:2} holds if $q^{(1+\delta)s}\geq (s/4k)^{2k}$.
By Lemma~\ref{lem:13},
\[
q^{(1+\delta)s}\geq q^{4k} = e^{2k\cdot 2x\merr},
\]
while
\[
\left( \frac s{4k} \right)^{2k} \leq \left( e+\frac 1{4k} \right)^{2k} \ll e^{2k},
\]
thus condition~(\ref{eq:25}) holds if $x\gg 1$.

Write $p=Mq^s+r$, where $0\leq r < q^s$ and $M>q^sL$.
By Corollary~\ref{cor:2} there is a set $Y\subseteq\F_p$ such that
\begin{equation}
  \label{eq:47}
  |Y|=s^k L \phi(q)q^{s-1} = 8s^kq^{(2+\delta)s}
\end{equation}
and a set $S$ of lines in general position that are $\frac 12$-rich in $Y\times Y$ such that
\begin{equation}
  \label{eq:48}
  |S|\geq \left( \frac s{4k} \right)^k \geq e^k.
\end{equation}

By Lemma~\ref{lem:13},
\[
\log |S| \geq k \sim\frac{x}{\log x},
\]
while
\[
\log |Y|\approx k\log s + (2+\delta)s\log q \approx \frac{x^2}{\log x}.
\]
Thus
\[
\log |S| \approx \sqrt{\frac{\log |Y|}{\log\log |Y|}},
\]
as desired.

Now we will derive constraints on $m=|Y|$.
Since $x\gg 1$, we have $m\gg 1$.
On the other hand, we must have
\[
p\geq Mq^s \geq q^{2s}L=\frac{8q}{\phi(q)}q^{(3+\delta)s}\sim 8e^{\gamma}\log x q^{(3+\delta)s}\approx (\log\log q) q^{(3+\delta)s}.
\]
Since $k\geq \frac x{\log x} \left( 1 - \frac {C}{\log x} \right)$ and $q\approx e^x$, we have
\[
s^k \gg k^k \gg \left( \frac x{\log x} \right)^{k} \gg \frac{q}{q^{C/\log\log q}}.
\]
Thus
\[
|Y| \gg \frac{q^{(3+\delta)s}}{q^{C/\log\log q}}.
\]
Thus to ensure $(\log\log q)q^{(3+\delta)s} \ll p$, it suffices to take $|Y| \leq p^{1-\epsilon}$ for any $\epsilon > 0$.
\end{proof}

\subsection{Quantitative lower bounds over $\C$}
\label{sec:quant-lower-bounds-C}
In this section, we prove the lower bound in Theorem~\ref{thm:21} for $\F=\C$.
\begin{thm}
  \label{thm:24}
For all $0<\alpha<1$ there exists an absolute constant $N_0\geq 0$ such that for all $N\geq N_0$ there is a subset $Y\subseteq \C$ with that $|Y|\geq N$ and a set $S$ of lines in general position that are $\alpha$-rich in $Y\times Y$ and satisfy
\[
\log |S|\approx_\alpha \sqrt{\frac{\log |Y|}{\log\log |Y|}}.
\]
\end{thm}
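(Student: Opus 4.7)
The plan is to deduce Theorem~\ref{thm:24} from Theorem~\ref{thm:15} by lifting the constructed configuration from $\F_p^2$ to $\C^2$ via Grosu's embedding theorem \cite{grosu2014locally}. Grosu's theorem guarantees that any finite collection of coordinates in $\F_p$ can be lifted to $\C$ so that every prescribed polynomial identity and non-identity in those coordinates is preserved, provided $p$ is sufficiently large compared to the complexity of the collection.

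First, given $N \geq N_0$ and $\alpha \in (0,1)$, I would fix a small $\epsilon > 0$ (say $\epsilon = 1/2$) and choose $m \approx N$. Then I would choose a prime $p$ that (i) satisfies $m \ll p^{1-\epsilon}$, so that Theorem~\ref{thm:15} is applicable, and (ii) is large enough that Grosu's theorem can be applied to the ambient configuration we will build. Theorem~\ref{thm:15} then furnishes a set $Y_p \subseteq \F_p$ of size $\approx_\alpha m$ together with a set $S_p$ of affine transformations in general position such that every $\ell \in S_p$ is $\alpha$-rich in $Y_p \times Y_p$ and $\log |S_p| \approx_\alpha \sqrt{\log |Y_p|/\log \log |Y_p|}$.

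Next, I would form the list of coordinates consisting of the $|Y_p|$ elements of $Y_p$ together with the slopes and intercepts of the lines in $S_p$. The conditions we need to transfer are all polynomial in these coordinates: (a) the $\alpha |Y_p|$ incidences of each line $\ell \in S_p$ with points of $Y_p \times Y_p$ (vanishing of the line equation at those points); (b) the non-incidence of all other points of $Y_p \times Y_p$ with $\ell$ (non-vanishing); (c) the condition that no two lines in $S_p$ have equal slope (non-vanishing of differences of slopes); and (d) the condition that no three lines in $S_p$ are concurrent (non-vanishing of a $3 \times 3$ determinant for each triple). Applying Grosu's theorem to this finite list produces a subset $Y \subseteq \C$ with $|Y| = |Y_p|$ and complex slopes and intercepts defining a set $S$ of lines over $\C$ with $|S| = |S_p|$. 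The lift is coordinate-wise, so $Y \times Y$ embeds canonically into $\C^2$, and by construction every line of $S$ is $\alpha$-rich in $Y \times Y$ and the family $S$ is in general position. Since $|Y| = |Y_p|$ and $|S| = |S_p|$, the bound on $\log |S|$ in terms of $\log |Y|$ transfers verbatim.

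The main obstacle is bookkeeping the hypotheses of Grosu's theorem: one must check that the required lower bound on $p$ (a polynomial in the total number of polynomial (non-)vanishing statements, which is $O(m^2 + |S|^3)$) is compatible with the upper bound $m \ll p^{1-\epsilon}$ needed for Theorem~\ref{thm:15}. Since $|S|$ is only quasi-polylogarithmic in $m$, the total number of conditions is $O(m^{2 + o(1)})$, so taking $p$ to be, for instance, the smallest prime exceeding $m^{10}$ comfortably satisfies both sides. The remainder is routine: track the implicit constants depending on $\alpha$ through both theorems, noting that the $\alpha$-richness of every line is preserved exactly (not merely up to $o(1)$) because the preserved polynomial identities specify precisely which points lie on each line.
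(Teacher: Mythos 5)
Your approach is exactly the paper's: apply Theorem~\ref{thm:15} to produce the configuration in $\F_p^2$ and then lift it to $\C^2$ via Grosu's rectification theorem (Theorem~\ref{thm:25}), as packaged in Corollary~\ref{cor:3}. The logic is sound, but you have misquoted the threshold in Grosu's theorem: it does not require $p$ merely polynomial in the number of coordinates/conditions. The actual hypothesis is that the total number of coordinates $|A|$ be bounded by an \emph{iterated logarithm} of $p$, namely $|A| < \log_2\log_{2k}\log_{2k^2}p - 1$, so $p$ must be triple-tower-exponential in $|A|$ (and $|A|$ here is roughly $|Y| + 2|S| + \binom{|S|}{3}$). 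Taking $p > m^{10}$ as you suggest is therefore far from sufficient. Luckily this does not break the argument: both the requirement from Theorem~\ref{thm:15} ($m \ll p^{1-\epsilon}$) and the requirement from Grosu are \emph{lower} bounds on $p$, so one simply chooses $p$ astronomically large relative to $N$, exactly as the paper does with the choice $5N \leq \log_2\log_{14}\log_{98}p - 2$. One further small remark: your condition (b), preserving non-incidences, is unnecessary — $\alpha$-richness is a one-sided lower bound, so preserving the $\geq \alpha|Y|$ incidences (condition (a)) plus general position (conditions (c),(d)) already suffices, and indeed the paper only transfers the incidence identities and the non-vanishing of the $3\times 3$ determinants $d_{ijk}$.
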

The proof of Theorem~\ref{thm:24} is an application of a \emph{rectification theorem}\/ of Grosu~\cite{grosu2014locally}, which allows us to embed small subsets of $\F_p$ into $\C$ while preserving algebraic equations of low complexity.
In particular, Grosu's theorem allows up to embed the counterexamples constructed in Theorem~\ref{thm:15} into $\C^2$.
This seems to be the first time that Grosu's theorem has been used to prove a counterexample to a statement over $\C$, rather than to prove a positive statement for very small subsets of $\F_p$.

Before we state Grosu's theorem, we need some definitions.
A polynomial $f\in\Z[x_1,\ldots,x_n]$ is \emph{$k$-bounded}\/ if $\deg(f)\leq k$ and the sum of the absolute values of the coefficients of $f$ are bounded by $k$.
Given rings $R_1$ and $R_2$ and subsets $A=\{a_1,\ldots,a_n\}\subseteq R_1$ and $B\subseteq R_2$, we call a bijection $\phi\colon A\to B$ a \emph{Freiman ring isomorphism of order $k$}\/ (or $F_k$-ring isomorphism) if for any $k$-bounded $f\in\Z[x_1,\ldots, x_n]$ we have
\[
f(a_1,\ldots,a_n)=0\iff f(\phi(a_1)),\ldots,f(\phi(a_n)))=0.
\]
\begin{thm}[Grosu]
  \label{thm:25}
Let $k\geq 2$ be an integer, let $p$ e a prime, and let $A$ be a subset of $\F_p$.
If $|A|<\log_2\log_{2k}\log_{2k^2}p -1$, then there exists a subset $A'\subseteq \C$, and a homomorphism $\phi_p\colon\Z[A']\to\F_p$ such that $\phi_p$ is an $F_k$-ring homomorphism between $A'$ and $A$.
\end{thm}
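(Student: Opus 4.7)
My plan is to construct $A' = \{a'_1, \ldots, a'_n\} \subseteq \C$ inductively, one coordinate at a time, so that the $k$-bounded polynomial relations holding for $A'$ in $\C$ are exactly those holding for $A$ modulo $p$. Once this matching is achieved, the homomorphism $\phi_p : \Z[A'] \to \F_p$ is forced: send each $a'_i$ to $a_i$ and extend $\Z$-linearly; well-definedness of this ring map is equivalent to the matching of polynomial relations.

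First, I would enumerate and classify. Let $\mathcal P_i \subseteq \Z[x_1, \ldots, x_i]$ denote the set of $k$-bounded polynomials in $i$ variables; a stars-and-bars count gives $|\mathcal P_i| \leq (2k+1)^{\binom{i+k}{k}}$. Split $\mathcal P_i$ into \emph{good} polynomials (those $f$ with $f(a_1, \ldots, a_i) \equiv 0 \pmod p$) and \emph{bad} polynomials. The induction hypothesis at step $i$ is that $a'_1, \ldots, a'_{i-1} \in \C$ have been chosen so that for every $f \in \mathcal P_{i-1}$, $f(a'_1, \ldots, a'_{i-1}) = 0$ in $\C$ if and only if $f$ is good.

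Second, at step $i$ I would specialize each $f \in \mathcal P_i$ at $(a'_1, \ldots, a'_{i-1})$, obtaining a univariate $\tilde f(x_i) \in \C[x_i]$ of degree at most $k$. The good specializations generate an ideal in $\C[x_i]$ whose variety $V_{\mathrm{good}}$ is either all of $\C$ (when the good ideal is trivial) or a finite root set; the nonzero bad specializations cut out a finite set $V_{\mathrm{bad}}$. I would then pick $a'_i \in V_{\mathrm{good}} \setminus V_{\mathrm{bad}}$, choosing $a'_i$ transcendental over $\Q(a'_1, \ldots, a'_{i-1})$ in the trivial case to automatically avoid $V_{\mathrm{bad}}$.

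The main obstacle is ruling out two failure modes and then unwinding the height growth that produces the triple-logarithm bound. The first failure is a bad $f$ whose specialization $\tilde f$ is identically zero; I would rule this out by expanding $f = \sum_j c_j(x_1, \ldots, x_{i-1})\, x_i^j$ and observing that each coefficient $c_j$ is itself $k$-bounded, so the induction hypothesis would force each $c_j$ to be good, whence $f(a_1, \ldots, a_i) \equiv 0 \pmod p$, contradicting $f$ being bad. The second failure is $V_{\mathrm{good}}$ finite and entirely contained in $V_{\mathrm{bad}}$; one handles this by a size comparison over the algebraic closure against the crude bound $|V_{\mathrm{bad}}| \leq k|\mathcal P_i|$, together with an effective Nullstellensatz argument to ensure that no integer expression produced in the construction vanishes modulo $p$ for spurious reasons. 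Because the integer expressions appearing at stage $i$ — resultants, coefficients of gcds, and evaluations of $k$-bounded polynomials at algebraic numbers of bounded height — have sizes that are iterated exponentials in $i$, one needs $p$ to exceed an iterated triple exponential in $n$, which rearranges to the stated bound $n < \log_2 \log_{2k} \log_{2k^2} p - 1$.
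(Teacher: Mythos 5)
First, note that the paper does not prove Theorem~\ref{thm:25} at all: it is quoted verbatim from Grosu \cite{grosu2014locally} and used as a black box, so there is no internal proof to compare against; your attempt has to be judged against what a correct proof requires, and there it has two genuine gaps. The first is the claim that the homomorphism $\phi_p\colon\Z[A']\to\F_p$ is ``forced'' and that its well-definedness ``is equivalent to the matching of polynomial relations.'' Well-definedness of a ring map $\Z[A']\to\F_p$ sending $a_i'\mapsto a_i$ requires that \emph{every} integer polynomial vanishing at $(a_1',\ldots,a_n')$ vanish at $(a_1,\ldots,a_n)$ modulo $p$, not merely the $k$-bounded ones. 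Your induction only controls $k$-bounded relations, and in the steps where you pick $a_i'$ as a root of specialized good polynomials, $a_i'$ satisfies additional algebraic relations (e.g.\ its minimal polynomial over $\Q(a_1',\ldots,a_{i-1}')$, cleared of denominators) of degree and height far beyond $k$-bounded; nothing in your argument shows these are compatible with reduction mod $p$. Since the existence of $\phi_p$ on all of $\Z[A']$ is part of the conclusion (and is exactly what Corollary~\ref{cor:3} uses), this cannot be waved away.

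The second and more serious gap is the treatment of your ``second failure mode.'' When the good specializations are not all identically zero, their common root set $V_{\mathrm{good}}$ can consist of a single point, so a cardinality comparison with $|V_{\mathrm{bad}}|\leq k|\mathcal P_i|$ proves nothing: you must show that a common root of the good system exists and can be taken outside every bad root set, i.e.\ you must transfer the existence of $a_i$ from $\bar\F_p$ to $\C$ over the specialized point $(a_1',\ldots,a_{i-1}')$. Already with two good linear specializations $c_0+c_1x_i$ and $d_0+d_1x_i$ one needs the resultant $c_0d_1-c_1d_0$ to vanish at $(a_1',\ldots,a_{i-1}')$ because it vanishes at $(a_1,\ldots,a_{i-1})$ mod $p$; but this resultant has degree up to $2k$ and coefficient sum up to about $2k^2$, so it is not $k$-bounded and your induction hypothesis says nothing about it. A correct argument must strengthen the induction to preserve vanishing/nonvanishing of a class of polynomials whose degrees and heights grow rapidly with $i$ (this growth is visible in the bases $2k$ and $2k^2$ of the iterated logarithms in the hypothesis), and must verify that such a strengthened invariant can actually be maintained when $a_i'$ is chosen. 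Your final sentence acknowledges the height growth, but as written the invariant you carry is too weak to justify either the step-$i$ choice or the final bound, so the proof does not close.
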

Grosu used Theorem~\ref{thm:25} to prove that incidence bounds for points and lines in $\C^2$ can be applied to small sets of points and lines in $\F_p^2$ \cite[Theorem 10]{grosu2014locally}.
We give a variation on this argument that guarantees that lines in general position in $\C^2$ correspond to lines in general position in $\F_p^2$.
\begin{cor}
  \label{cor:3}
Let $p$ be a prime, let $Y$ be a subset of $\F_p$, and let $S$ be a set of lines in $\F_p^2$ in general position that are $\alpha$-rich in $Y\times Y$ for some $0<\alpha<1$.

If $|Y|+2|S|+{|S|\choose 3} <\log_2\log_{14}\log_{98}p-2$, then there exists a subset $Y'\subseteq \C$ and a set of lines $S'$ in $\C^2$ that are in general position and $\alpha$-rich in $Y'\times Y'$.
\end{cor}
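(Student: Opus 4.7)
The plan is to apply Grosu's Theorem~\ref{thm:25} with $k=7$ (matching the iterated-log tower $\log_2\log_{14}\log_{98}p$, since $2k=14$ and $2k^2=98$) to transport the configuration $(Y,S)$ from $\F_p^2$ to $\C^2$ through a Freiman-ring isomorphism that preserves all the algebraic identities needed to witness $\alpha$-richness, distinct slopes, and non-concurrency. Since $S$ is in general position, at most one of its lines is vertical; such a line has the form $x=c$ with $c\in Y$ and can be set aside and reinstated in $\C^2$ at the end. Writing each remaining line as $\ell_i\colon y=a_ix+b_i$, form
\begin{equation*}
A\;:=\;Y\,\cup\,\{a_i:\ell_i\in S\}\,\cup\,\{b_i:\ell_i\in S\},
\end{equation*}
so $|A|\leq |Y|+2|S|$, comfortably below Grosu's threshold by hypothesis (the extra $\binom{|S|}{3}+1$ of slack leaves room both for Grosu's own $-1$ overhead and for optionally adjoining one concurrency-determinant per triple of lines as an explicit witness in $A$).

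Grosu's theorem yields $A'\subseteq\C$ and an $F_7$-ring isomorphism $\phi\colon A'\to A$. Define $Y':=\phi^{-1}(Y)$ and let $S'$ consist of the lines $y=\phi^{-1}(a_i)x+\phi^{-1}(b_i)$ in $\C^2$, augmented by $x=\phi^{-1}(c)$ if a vertical line was set aside. Each combinatorial property is encoded by a $7$-bounded polynomial relation in elements of $A$, and therefore transfers verbatim under $\phi$: incidence, $a_ix+b_i-y=0$, has degree $1$ and coefficient $L^1$-norm $3$; non-parallelism, $a_i-a_j\neq 0$, has degree $1$ and norm $2$; and non-concurrency of three lines is the non-vanishing of
\begin{equation*}
d_{ijk}\;=\;a_i(b_j-b_k)+a_j(b_k-b_i)+a_k(b_i-b_j),
\end{equation*}
a degree-$2$ polynomial with six monomials of coefficient $\pm 1$. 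Because $\phi$ preserves every such vanishing and non-vanishing, the incidences on each line are in bijection with those on its image, yielding $|\ell_i'\cap(Y'\times Y')|=|\ell_i\cap(Y\times Y)|\geq\alpha|Y|=\alpha|Y'|$; the slopes $\phi^{-1}(a_i)$ remain pairwise distinct; and no three lines of $S'$ meet at a common point.

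The main point of friction is conceptually small but requires care: one must identify the finite list of algebraic witnesses of the combinatorial properties and check that each is $k$-bounded for $k=7$, then handle the vertical-line case outside the slope-intercept parametrization. The latter is resolved by observing that non-concurrency of a vertical line $x=c$ with two lines $\ell_i,\ell_j$ is itself witnessed by the $4$-bounded non-identity $(b_j-b_i)-c(a_i-a_j)\neq 0$, which $\phi$ likewise preserves. Once these algebraic bookkeeping steps are in place, every conclusion is a direct application of the $F_7$-isomorphism.
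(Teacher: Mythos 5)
Your proposal is correct and follows essentially the same route as the paper: apply Grosu's Theorem~\ref{thm:25} with $k=7$ and transport the algebraic witnesses of incidence, non-parallelism, and non-concurrency across the $F_7$-ring isomorphism. You streamline the argument in two small ways. First, you avoid adjoining the determinants $d_{ijk}$ and the element $0$ to $A$ (the paper includes them and then invokes bijectivity together with the $7$-bounded identity $d_{ijk}-\det(\cdots)=0$); instead you observe that the $6$-bounded polynomial $a_ib_j-a_ib_k+a_jb_k-a_jb_i+a_kb_i-a_kb_j$ is already a polynomial in the generators $a_i,b_i$ lying in $A$, so its non-vanishing transfers directly across the isomorphism. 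This makes the $\binom{|S|}{3}+1$ slack in the hypothesis superfluous for your argument, but costs nothing. Second, you handle a possible vertical line explicitly via the $4$-bounded non-identity $(b_j-b_i)-c(a_i-a_j)\neq 0$, a case the paper's slope-intercept parametrization of $S$ tacitly excludes. One small slip: the incidence polynomial $a_ix+b_i-y$ has degree $2$, not $1$, since both $a_i$ and $x$ are indeterminates here; it remains $3$-bounded so nothing downstream is affected.
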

\begin{proof}
  We will show that it suffices to construct a $F_7$-ring isomorphism between a certain subset $A\subseteq \F_p$ and some subset $A'\subseteq \C$.

Suppose that the elements of $S$ have the form $\ell_i(x)=a_ix+b_i$.
If $\ell_i,\ell_j,\ell_k$ are distinct lines that intersect in a common point, then the matrix
\[
  \begin{pmatrix}
    a_i & b_i & 1\\
    a_j & b_j & 1\\
    a_k & b_k & 1\\
  \end{pmatrix}
\]
is singular.
By hypothesis, the lines of $S$ are in general position, so the numbers
\begin{equation}
  \label{eq:50}
  d_{ijk}:=\det
  \begin{pmatrix}
    a_i & b_i & 1\\
    a_j & b_j & 1\\
    a_k & b_k & 1\\
  \end{pmatrix}
\end{equation}
are non-zero.

Let $A$ be the union of $Y, \{a_i\}, \{b_i\}, \{d_{ijk}\},$ and $\{0\}$.
Then by hypothesis
\begin{equation}
  \label{eq:51}
  |A| \leq |Y| + 2|S| +{|S|\choose 3} + 1 < \log_2\log_{14}\log_{98}p - 1.
\end{equation}

For each line $\ell_i$, we have at least $\alpha|Y|$ solutions to
\begin{equation}
  \label{eq:49}
y'=a_iy+b  
\end{equation}
with $y,y'$ in $Y$.
This equation is 3-bounded.
The equation (\ref{eq:50}) is 7-bounded.

By (\ref{eq:51}), we may apply Theorem~\ref{thm:25} to $A$ to find a subset $A'\subseteq \C$ and a $F_7$-ring homorphism $\phi_p\colon\Z[A']\to\F_p$ from $A'$ to $A$.

Let $Y'\subseteq \C$ denote the set of elements in $A'$ that map to $Y$ under $\phi_p$ and let $S'$ denote the set of lines defined by $\ell_i'(x)=a'_ix+b'_i$ where $\phi_p(a_i')=a_i$ and $\phi_p(b'_i)=b_i$.
Since $\phi_p$ is a bijection from $A'$ to $A$, we have $|Y'|=|Y|$ and $|S'|=|S|$.

Since $\phi_p$ preserves \ref{eq:50} and (\ref{eq:49}), the lines of $S'$ are in general position (since by bijectivity, no $d_{ijk}$ is mapped to 0), and each line in $S'$ is incident to at least $\alpha|Y'|$ points of $Y'\times Y'$.
\end{proof}

We are now ready to prove Theorem~\ref{thm:24}.
\begin{proof}[Proof of Theorem~\ref{thm:24}]
  Without loss of generality, we may assume that $|S|<N^{1/3}$.
Choose a prime $p$ so that
\begin{equation}
  \label{eq:52}
  5N\leq \log_2\log_{14}\log_{98}p -2.
\end{equation}
Since $N_0\leq N \leq p^{1/2}$, if $N_0$ is sufficiently large (depending on $\alpha$), then by Theorem~\ref{thm:15} there is a subset $Y\subseteq\F_p$ of size $\approx_\alpha N$ and a set $S$ of lines in $\F_p^2$ in general position and $\alpha$-rich in $Y\times Y$ such that
\[
\log |S|\approx_\alpha \sqrt{\frac{\log|Y|}{\log\log|Y|}}.
\]
By (\ref{eq:52}) we have
\[
|Y| + 2|S| + {|S|\choose 3} \leq 5N \leq  \log_2\log_{14}\log_{98}p -2,
\]
so by Corollary~\ref{cor:3} we may embed $Y$ into $\C$ and $S$ into $\C^2$.
\end{proof}

\section{Upper bounds for rich lines in grids}
\label{sec:upper-bounds-rich}

In this section, we prove two upper bounds for the number of rich lines in a $N\times N$ grid in $\F^2$, and an asymmetric sum-product estimate over $\F$, where $\F=\C$ or $\F=\F_p$.
If $\F=\F_p$, we need an additional contraint to rule out trivial counter-examples: the grid $\F_p\times\F_p$ has $p^2$ 1-rich lines, and $\F_p$ does not grow under addition or multiplication.

These theorems are all consequences of Theorem~\ref{thm:4}, which is a general inverse theorem for rich lines in grids. Theorem~\ref{thm:4} is an immediate corollary of Theorem~\ref{thm:9x}, which is an inverse theorem for rich affine transformations.
The difference between Theorems~\ref{thm:4} and \ref{thm:9x} is a matter of language, and we give a dictionary between geometric and algebraic terminology in Section~\ref{sec:geom-dict-proof}.

First we state the upper bound for $\alpha$-rich lines in a $N\times N$ grid where $\alpha=N^{-\delta}$, which generalizes Theorem~\ref{thm:22} to sets of lines of size $N^\epsilon$ for any $\epsilon>0$, as well as to points and lines defined over $\C$ or $\F_p$.
\begin{thm}[Upper bound, polynomial density]
  \label{thm:3}
For all $\epsilon >0$ and $0<\gamma<1$, there is a $\delta>0$ such the following holds for all $N>0$.

Let $L$ be a set of $N^\epsilon$ lines in $\F^2$ that are $N^{-\delta}$-rich in an $N\times N$ grid.
\begin{itemize}
\item If $\F=\C$, then there is a subset $L'\subseteq L$ of size $|L'|\gg |L|^{1-\gamma}$ such that the lines of $L'$ are either parallel or concurrent.
\item If $\F=\F_p$, the same conclusion holds, provided that $N\ll p^{1-O(\gamma\,\epsilon)}$.
\end{itemize}
Further, we may take $\delta=1/(J\,2^{J})$, where $J\approx \gamma\epsilon$.
\end{thm}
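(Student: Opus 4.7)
The approach I would take is to apply Theorem~\ref{thm:4} directly to the given data, taking $|Y|=N$, $\alpha=N^{-\delta}$, and $|L|=N^{\epsilon}$, and then tune the parameters $J$ and $\delta$ so that the two loss factors $(\alpha/2)^{C\,2^{J}}$ and $|Y|^{-C/J}$ appearing in the conclusion of Theorem~\ref{thm:4} each cost at most a factor of roughly $N^{\gamma\epsilon/2}$ in the lower bound for $|L'|$. The structural content of the theorem, that $L'$ consists of parallel or concurrent lines, comes for free from Theorem~\ref{thm:4}; the real work is just to manage exponents.

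Let $C_{0}$ denote the constant from Theorem~\ref{thm:4}. First I would fix $J=J(\gamma,\epsilon)$ to be the smallest positive integer with $J\geq 4C_{0}/(\gamma\epsilon)$, and then set $\delta:=1/(J\,2^{J})$, which matches the shape of $\delta$ asserted in Theorem~\ref{thm:3}. With these choices $\delta\,2^{J}=1/J\leq\gamma\epsilon/(4C_{0})$, so $C_{0}\delta\,2^{J}$ and $C_{0}/J$ are each at most $\gamma\epsilon/4$. Next I would verify the hypothesis $(\alpha/2)^{2^{J}}\geq 1/|Y|$, which rearranges to $N^{1-1/J}\geq 2^{2^{J}}$ and holds once $N$ is larger than a constant depending only on $\gamma$ and $\epsilon$. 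Over $\F_{p}$ the additional constraint $|Y|\leq(\alpha/2)^{2^{J}}p$ becomes $p\geq 2^{2^{J}}N^{1+1/J}$; since $2^{2^{J}}$ depends only on $\gamma,\epsilon$ and $1/J=O(\gamma\epsilon)$, this is precisely the claimed condition $N\ll p^{1-O(\gamma\epsilon)}$.

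With the hypotheses verified, Theorem~\ref{thm:4} produces $L'\subseteq L$ of parallel or concurrent lines satisfying
\[
|L'|\gg (\alpha/2)^{C_{0}\,2^{J}}\,|Y|^{-C_{0}/J}\,|L|=2^{-C_{0}\,2^{J}}\,N^{\epsilon-2C_{0}/J}.
\]
Since $2C_{0}/J\leq\gamma\epsilon/2$, the exponent of $N$ is at least $\epsilon(1-\gamma/2)$, and the constant prefactor $2^{-C_{0}\,2^{J}}$, which depends only on $\gamma$ and $\epsilon$, is absorbed by an arbitrarily small power of $N$ once $N$ is sufficiently large. This yields $|L'|\geq N^{\epsilon(1-\gamma)}=|L|^{1-\gamma}$, as required. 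The only substantive obstacle is Theorem~\ref{thm:4} itself; once that structural statement is granted, Theorem~\ref{thm:3} reduces to the exponent bookkeeping above.
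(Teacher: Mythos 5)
Your proposal follows the paper's proof essentially exactly: both set $\delta = 1/(J\,2^J)$ for a suitably large $J$ depending on $\gamma\epsilon$ and the constant from Theorem~\ref{thm:4}, verify the two hypotheses of Theorem~\ref{thm:4} (including the extra $\F_p$ condition), and then note that $\delta\,2^J = 1/J$ makes both exponent-loss terms small enough that the lower bound exceeds $|L|^{1-\gamma}$. The only differences are cosmetic (your $J \geq 4C_0/(\gamma\epsilon)$ versus the paper's $J > 2C/(\gamma\epsilon)$, and the explicit absorption of the prefactor $2^{-C_0 2^J}$ into the $N^{\epsilon\gamma/2}$ margin rather than into a $\gg_J$ constant).
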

Theorem~\ref{thm:3} immediately implies the main theorem of \cite{amirkhanyan2017lines} (Theorem~\ref{thm:23}), since if the lines of $L$ are in general position, then $|L'|\leq 2$, which yields a contradiction for $N$ sufficiently large.

Next we consider  $\alpha$-rich lines in an $N\times N$ grid where $\alpha$ is fixed.
\begin{thm}[Upper bound, constant density]
\label{thm:5}
For all $0<\alpha<1$ there is a constant $C=C(\alpha)>0$ such that the following holds for all $N>0$.

Let $L$ is a set of lines in $\F^2$ that are in general position and are $\alpha$-rich in an $N\times N$ grid.
\begin{itemize}
\item If $\F=\C$, then \(|L| \ll_\alpha N^{C/\log\log N}.\)
\item If $\F=\F_p$, then same conclusion holds, provided that \(N^{1+\log(2/\alpha)/\log\log N}\leq p\).
\end{itemize}
\end{thm}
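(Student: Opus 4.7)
The plan is to apply Theorem~\ref{thm:4} directly to the set $L$ and choose the parameter $J$ so as to optimize the resulting bound. By Theorem~\ref{thm:4}, there exists a subset $L' \subseteq L$ with
\[
|L'| \gg \left(\frac{\alpha}{2}\right)^{C\, 2^J} N^{-C/J} |L|
\]
such that either all lines of $L'$ are pairwise parallel, or all lines of $L'$ pass through a common point. Because $L$ is in general position, no two lines of $L$ are parallel and no three lines of $L$ are concurrent, so $|L'| \leq 2$ in either case. Rearranging immediately gives
\[
|L| \;\ll\; (2/\alpha)^{C\, 2^J} \, N^{C/J},
\]
valid for any $J$ satisfying the hypothesis $(\alpha/2)^{2^J} \geq 1/N$ (and, in the $\F_p$ case, $N \leq (\alpha/2)^{2^J} p$).

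The remaining task is to pick $J$ well. I would take
\[
J := \lfloor \log_2\bigl(\log N / \log\log N\bigr) \rfloor,
\]
so that $2^J \asymp \log N / \log\log N$. With this choice both factors in the bound become sub-polynomial in $N$: the factor $N^{C/J}$ equals $N^{O(1/\log\log N)}$, and
\[
(2/\alpha)^{C\, 2^J} \;=\; \exp\!\left( O_\alpha\!\left(\frac{\log N}{\log\log N}\right)\right) \;=\; N^{O_\alpha(1/\log\log N)}.
\]
Combining, $|L| \ll_\alpha N^{C_\alpha / \log\log N}$, which is the desired conclusion.

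It remains to verify the hypothesis on $J$. In the complex case, $(\alpha/2)^{2^J} = N^{-\log(2/\alpha)/\log\log N}$, which exceeds $1/N$ for $N$ large enough in terms of $\alpha$. In the finite field case, the stated condition $N^{1+\log(2/\alpha)/\log\log N} \leq p$ is exactly what is needed to ensure $N \leq (\alpha/2)^{2^J} p$. Since the theorem is stated with an implicit constant depending on $\alpha$, for small $N$ (below the threshold where these inequalities start to hold) the bound $|L| \ll_\alpha 1$ is absorbed into the constant.

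There is no serious obstacle beyond bookkeeping: the argument is a one-shot application of Theorem~\ref{thm:4} once one recognizes that the general position hypothesis collapses the structured subset $L'$ to size at most $2$. The only mildly delicate point is the simultaneous optimization of the two competing factors $(2/\alpha)^{C 2^J}$ and $N^{C/J}$ subject to the size constraint on $2^J$, but the choice $2^J \asymp \log N/\log\log N$ balances them at the common value $N^{O_\alpha(1/\log\log N)}$.
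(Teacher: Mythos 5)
Your proof is correct and follows essentially the same path as the paper's: a single application of Theorem~\ref{thm:4}, the observation that general position forces $|L'|\leq 2$, and the choice $2^J\asymp\log N/\log\log N$ to balance the two factors $(2/\alpha)^{C\,2^J}$ and $N^{C/J}$ at $N^{O_\alpha(1/\log\log N)}$. The only cosmetic difference is that you floor $J$ to make it an integer (slightly more careful than the paper, which sets $J=\log_2(\log_2 N/\log_2\log_2 N)$ without rounding), and your hypothesis-verification bookkeeping is organized a bit differently but amounts to the same computations.
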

Theorem~\ref{thm:5} proves the upper bounds stated in Theorem~\ref{thm:21}.

We will prove the following asymmetric sum-product result, which immediately implies Theorem~\ref{thm:29}.
\begin{repthm}{thm:2}
Suppose that $A,B,C\subseteq\F$ are finite.
Let $J>0$ be a positive integer and let $0< K \leq \frac 12 |A|^{1/2^J}$ be a parameter.

If $\F=\C$, then either
\begin{equation}
  %\label{eq:5}
  |AC|+|A+B| > K|A|,
\end{equation}
or
\begin{equation}
%  \label{eq:6}
  \min(|B|,|C|) \ll K^{C 2^J} |A|^{C/J}.
\end{equation}

If $\F=\F_p$, the same dichotomy holds, provided that $|A|\leq (2K)^{-2^J} p$.  
\end{repthm}
Choosing $2K = |A|^{\frac 1{J 2^J}}$ proves Theorem~\ref{thm:29} with $\epsilon= 1/J$, since (\ref{eq:6}) cannot hold for this choice of $K$.

Theorems~\ref{thm:3}, \ref{thm:5}, and \ref{thm:2} are special cases of the following general inverse theorem for rich lines in grids, which we stated in the introduction.
\begin{repthm}{thm:4}
There is an absolute constant $C>0$ such that the following holds.
Let $Y$ be a finite subset of $\F$ and let $L$ be a set of $\alpha$-rich lines in $Y\times Y$.
Let $J>0$ be an integer such that $(\alpha/2)^{2^J}\geq 1/|Y|$.

If $\F=\C$, then there is a subset $L'\subseteq L$ such that
\begin{enumerate}
\item the lines of $L'$ are either parallel or concurrent, and
\item $|L'|\gg \left( \frac\alpha2 \right)^{C\, 2^J} |Y|^{-C/J}|L|$.
\end{enumerate}
If $\F=\F_p$, then the same conclusion holds, provided that $|Y| \leq (\alpha/2)^{2^J}p$.
\end{repthm}
In turn, Theorem~\ref{thm:4} is a simple translation of an \emph{algebraic inverse theorem}\/ for rich affine transformations.

We need some notation.
% For a field $\F$, the group $G=\aff 1\F$ consists of linear maps of the form $x\mapsto ax+b$, where $a\in \F^*$ and $b\in \F$, with product given by composition.
If $Y$ is a finite subset of $\F$, we let $\sym_\alpha(Y)$ denote the set of transformations $g$ in $\aff 1\F$ such that $|Y\cap gY|\geq \alpha|Y|$.
%(The set $\sym_\alpha(Y)$ is called a \emph{symmetry set}; see 
\begin{thm}[Inverse theorem for $\aff 1\F\actson\F$]
  \label{thm:9x}
Let $\F$ denote $\C$ or $\F_p$.
There exists an absolute constant $C>0$ such that the following holds:

Suppose that $Y\subseteq\F$ is finite, $0<\alpha<1$, and $A\subseteq\sym_\alpha(Y)$.

Let $J\geq 0$ be an integer such that $(\alpha/2)^{2^J}\geq 1/|Y|$, and if $\F=\F_p$, suppose that $J$ also satisfies $|Y|\leq \left( \frac\alpha2 \right)^{2^J}p$.

Then there is an element $g$ in $G$ and an \emph{abelian}\/ subgroup $H$ of $G$ such that
\[
|A\cap gH|\gg \left( \frac\alpha2 \right)^{C\,2^J}|Y|^{-C/J}|A|.
\]
\end{thm}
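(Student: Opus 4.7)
The plan is to combine the group-action \bsg{} theorem of \cite{murphy2017group} (sketched in Appendix~\ref{sec:proof-group-action-bsg}) with the product theorem for $\aff 1\F$ proved in Appendix~\ref{sec:prod-theor-affine}. First I would reformulate the hypothesis $A \subseteq \sym_\alpha(Y)$ as an energy bound by summing $|Y \cap gY| \geq \alpha|Y|$ over $g \in A$:
\[
|\{(g,y,y') \in A \times Y \times Y : g y = y'\}| \geq \alpha |A| |Y|.
\]
This is exactly the hypothesis required for an asymmetric \bsg{} theorem for the action $G \actson \F$, which measures how ``concentrated'' the orbit map $A \times Y \to \F$ is.

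Next, I would feed this energy bound into the asymmetric group-action \bsg{} theorem to extract a subset $A_0 \subseteq A$ of size $|A_0| \gg (\alpha/2)^{C\,2^J} |A|$ with small tripling $|A_0 A_0^{-1} A_0| \leq K |A_0|$, where $K \leq |Y|^{O(1/J)}$. The parameter $J$ governs a trade-off: iterating the \bsg{} refinement $J$ times tightens the tripling bound at the cost of an exponential loss $(\alpha/2)^{2^J}$ in the density of $A_0$. I would then apply the product theorem for $\aff 1 \F$ to $A_0$: since the abelian subgroups of $\aff 1\F$ are classified (they are the conjugates of the translation subgroup and of the point stabilizers), the product theorem produces a coset $gH$ of an abelian subgroup $H$ with $|A_0 \cap gH| \gg K^{-C} |A_0|$. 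Combining the two density estimates yields
\[
|A \cap gH| \geq |A_0 \cap gH| \gg K^{-C} (\alpha/2)^{C\,2^J} |A| \gg |Y|^{-C/J} (\alpha/2)^{C\,2^J} |A|,
\]
which is the desired bound. The non-escape hypothesis $|Y| \leq (\alpha/2)^{2^J} p$ in the $\F_p$ case is needed to keep $A_0$ small enough that the product theorem is not trivialized by $A_0$ occupying a positive fraction of $\aff 1{\F_p}$.

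The main obstacle is the product theorem for $\aff 1\F$ itself, with polynomial dependence on $K$: Elekes' original argument over $\R$ uses the \ST{} incidence bound, so one needs the complex \ST{} theorem for $\F = \C$ and a finite-field incidence bound for $\F = \F_p$ (which is where the constraint involving $p$ enters). A secondary technical point is calibrating the iterated \bsg{} so that the $2^J$ and $1/J$ exponents appear with the stated constants; this requires careful bookkeeping in the group-action analogue of the Pl\"unnecke-Ruzsa inequality underlying the asymmetric \bsg{} theorem.
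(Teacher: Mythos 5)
Your outline correctly identifies all the necessary ingredients (the energy reformulation of $A\subseteq\sym_\alpha(Y)$, the group-action \bsg{} theorem, the product theorem for $\aff 1\F$, and the incidence bounds behind them), but there is a genuine gap in how you chain them together.

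You assert that the asymmetric group-action \bsg{} theorem ``extracts a subset $A_0\subseteq A$ of size $|A_0|\gg(\alpha/2)^{C\,2^J}|A|$ with small tripling,'' and then conclude via the monotonicity $|A\cap gH|\geq |A_0\cap gH|$. This is not what the theorem gives you, and in the asymmetric regime it cannot: when $|A|$ is much smaller than $|Y|$, the product set $A^{-1}A$ can have size comparable to $|A|^2$, which is still far below $|\sym_{\alpha'}(Y)|$, so the symmetry-set ceiling does not force any subset of $A$ to have small tripling. The actual mechanism in Theorem~\ref{thm:1} is different: one repeatedly forms \emph{partial product sets} $A_{j+1}=A_j^{-1}\pp{E_j}A_j$ (which live inside $\sym_{\alpha_j}(Y)$ but are emphatically \emph{not} subsets of $A$), and the pigeonhole principle on the ratios $|A_{j+1}|/|A_j|$ locates a step $j$ where $|A_{j+1}|\leq K|A_j|$; only then does the standard \bsg{} theorem yield a set $A_*$ (a subset of a translate of $A_j^{-1}$, not of $A$) with small tripling. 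The product theorem is applied to $A_*$, not to a piece of $A$, and the resulting structure in $A_*$ must be \emph{transferred back} to $A$ through part (2) of Theorem~\ref{thm:1} — the chain of popularity inequalities $|A_j\cap a_jS|/|A_j|\gtrapprox \alpha_j^2\,|A_{j+1}\cap S|/|A_{j+1}|$ iterated from $j$ down to $0$. Your proposal omits this transfer step entirely; without it the argument does not close, since you have no inclusion $A_*\subseteq A$ to fall back on. The loss factors $(\alpha/2)^{C\,2^J}$ and $|Y|^{-C/J}$ in the final bound arise precisely from this iterated transfer together with the symmetry-set upper bound $|\sym_{\alpha_J}(Y)|\ll \alpha_J^{-O(1)}|Y|$ (Theorem~\ref{thm:7}), which is what makes $K\ll(\alpha/2)^{-C}|Y|^{1/J}$ in the first place — another step your sketch passes over. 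The remaining elements of your proposal (the classification of abelian subgroups, the role of the constraint $|Y|\leq(\alpha/2)^{2^J}p$ in the finite-field incidence and product theorems) are consistent with the paper.
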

The significance of $A$ containing many elements in an abelian subgroup is that the only way to have many rich transformations is by popular differences or popular ratios.

In the first subsection, we give a dictionary between the geometric language of rich lines and the algebraic language of symmetry sets, then prove Theorem~\ref{thm:4}.
In the next subsection, we prove Theorems~\ref{thm:3}, \ref{thm:5}, and \ref{thm:2}.
Theorem~\ref{thm:9x} is proved in the final subsection.

\subsection{A geometric/algebraic dictionary and proof of Theorem~\ref{thm:4}}
\label{sec:geom-dict-proof}

As we have said, $G=\aff 1\F$ consists of transformations $x\mapsto ax+b$ with $a,b\in\F$ and $a\not=0$.
The group $G$ acts on the \emph{affine line}\/ $X=\F$ by linear maps.
If $g\in G$, $Y$ is a finite subset of $X$, and $|Y\cap gY|\geq\alpha|Y|$, we say that $g$ is an \emph{$\alpha$-approximate symmetry of $Y$}.
The collection of all $\alpha$-approximate symmetries of a set is called a \emph{symmetry set}
\[
\sym_\alpha(Y) = \{g\in G\colon |Y\cap gY|\geq\alpha|Y|\}.
\]
Symmetry sets were first defined in additive combinatorics in \cite[Section 2.7]{tao2010additive};
symmetry sets for a general action of a group $G$ on a set $X$ are discussed in more detail in \cite{murphy2017group}.

Every affine transformation in $\aff 1\F$ corresponds to a line (its graph) in $\F^2$.
By convention, we ignore vertical lines, thus every line in $\F^2$ is the graph of transformation in $\aff 1\F$.

Several properties of rich lines correspond to properties of approximate symmetries:
\begin{enumerate}
  \item collections of rich lines in grids correspond to symmetry sets,
  \item collections of \emph{parallel lines}\/ correspond to cosets of the \emph{translation subgroup},
  \item collections of \emph{concurrent lines}\/ correspond of cosets of \emph{homothety subgroups}.
\end{enumerate}
To prove (1), simply note that if a line $\ell$ has the equation $y=ax+b$ then 
\begin{equation}
  \label{eq:1x}
  |\ell\cap (Y\times Y)|\geq\alpha|Y|\iff |Y\cap (aY+b)|\geq \alpha|Y|.
\end{equation}

To prove (2) and (3), we need a bit of background on the subgroups of the affine group.

Let $\tau_b$ denote the transformation $x\mapsto x+b$.
The \emph{translation subgroup}\/ $U:=\{\tau_b\colon b\in\F\}$ is a normal subgroup of $G$ corresponding to translations of $\F$.
($U$ is for ``unipotent''.)

Let $d_a$ denote the transformation $x\mapsto ax$.
The \emph{dilation subgroup}\/ $T=\{d_a\colon a\in\F^*\}$ corresponds to dilations of $\F$ about $0$.
In general, the stabilizer of a point $x$ in $\F$ under the action of $\aff 1\F$ has the form $\stab(x)=gTg^{-1}$, where $g(0)=x$. We call $\stab(x)$ the \emph{homothety subgroup}\/ of dilations about $x$.

The dilation subgroup and the homothety subgroups are the \emph{maximal abelian subgroups}\/ of $G$.
(If $H$ is abelian, either $H\subseteq U$ or there is an element $x\in H\setminus U$, and $H$ is contained in the centralizer of $x$, which is a homothety subgroup.)
We will usually say ``abelian subgroup'' rather than saying ``dilation or homothety subgroup''.

For $x,y\in\F$, the set of transformations $\trans(x,y)$ sending $x$ to $y$ has the form $\trans(x,y)=gTh$, where $h(y)=0$ and $g(0)=x$ ($g$ and $h$ are not unique).
We call $\trans(x,y)$ the \emph{transporter of $x$ to $y$}; it is a left coset of $\stab(x)$ and a right coset of $\stab(y)$.

If $L$ is a set of (non-vertical) lines in $\F^2$, let $A_L$ denote corresponding set of affine transformations.
\begin{itemize}
\item Property (2) holds since the lines of lines $L$ have common slope $a$ if and only if the corresponding set of affine transformations $A_L$ is contained in $d_aU$, and
\item Property (3) holds since the lines of $L$ are incident to a common point $(x,y)$ in $\F^2$ if and only if $A_L$ is contained in $\trans(x,y)$, which is a coset of a homothety subgroup.
\end{itemize}

\begin{comment}
We make two more remarks.
\begin{enumerate}
\item If the lines of $L$ are in \emph{general position}, meaning that no two lines are parallel and no three lines intersect a common point, then the corresponding transformations satisfy
\begin{equation}
  \label{eq:11}
  |A_L\cap gU| < 2
\end{equation}
and
\begin{equation}
  \label{eq:12}
  |A_L\cap gTh| < 3
\end{equation}
for all $g,h\in G$.
We say that a set of affine transformations satisfying (\ref{eq:11}) and (\ref{eq:12}) are in \emph{general position}.
\item Translations and homotheties are the maximal abelian subgroups of $\aff 1\F$, and thus we will refer to them collectively as the abelian subgroups of $\aff 1\F$. (This is easy to prove: if $H$ is abelian, either $H\subseteq U$ or there is an element $x\in H\setminus U$, and $H$ is contained in the centralizer of $x$, which is a homothety subgroup.)
\end{enumerate}
\end{comment}

Now we derive Theorem~\ref{thm:4} from Theorem~\ref{thm:9x}.
\begin{proof}[Proof of Theorem~\ref{thm:4}]
  Let $L$ be a set of $\alpha$-rich lines in $Y\times Y$ and let $A$ denote the set of affine transformations corresponding to $L$.

By Theorem~\ref{thm:9x}, if $(\alpha/2)^{2^J}\geq 1/|Y|$, and $|Y|\leq (\alpha/2)^{2^J}p$ in the case $\F=\F_p$, then there is an abelian subgroup $S\leq G$ and an element $g$ in $G$ such that
\[
|A\cap gS|\gg \left( \frac\alpha2 \right)^{C\, 2^J}|Y|^{-C/J}|A|.
\]

Let $L'$ denote the set of lines in $L$ that correspond to elements of $A\cap gS$.
By Properties 2 and 3, the lines of $L'$ are either parallel or concurrent, and since $|L'|=|A\cap gS|$ and $|L|=|A|$, the desired lower bound holds.
\end{proof}

\subsection{Proof of Theorems~\ref{thm:3}, \ref{thm:5}, and \ref{thm:2}}
\label{sec:proof-theor-refthm:3}

In this section we prove Theorems~\ref{thm:3}, \ref{thm:5}, and \ref{thm:2} using Theorem~\ref{thm:4}.
The proofs of Theorems~\ref{thm:3} and \ref{thm:5} simply consist of choosing parameters and checking that the hypotheses of Theorem~\ref{thm:4} are satisfied.
The proof of Theorem~\ref{thm:2} is essentially the same as the proof of Corollary~\ref{cor:4} presented in the introduction.

\begin{proof}[Proof of Theorem~\ref{thm:3}]
Let $N=|Y|$.
Let $J$ be a positive integer such that $J> 2C/\gamma\epsilon$, where $C$ is the constant from Theorem~\ref{thm:4}.
Choose $\delta = 1/(J 2^J)$.

To apply Theorem~\ref{thm:4} for $\alpha=N^{-\delta}$, we must check the constraints on $\alpha$ and $J$.
Since
\begin{equation}
  \label{eq:29}
  \left( \frac\alpha2 \right)^{2^J} = \left( \frac 1{2N^\delta} \right)^{2^J} = \frac 1{2^{2^J}N^{1/J}},
\end{equation}
for $N$ sufficiently large, we have $(\alpha/2)^{2^J}\geq 1/N= 1/|Y|$.
If $\F=\F_p$, we must check the additional constraint $|Y|\leq (\alpha/2)^{2^J}p$.
Since
\[
\left( \frac \alpha2 \right)^{2^J}p \gg_{\gamma,\epsilon} N^{-\gamma\epsilon/2C}p,
\]
the additional constraint follows from the addition hypothesis $N\ll p^{1-O(\gamma\epsilon)}$ when $\F=\F_p$.

Thus in either case, we may apply Theorem~\ref{thm:4} to find a subset $L'\subseteq L$ of either parallel or concurrent lines such that
\[
|L'|\gg \left( \frac\alpha2 \right)^{C\, 2^J} |Y|^{-C/J}|L| \gg_J N^{-C\, \delta\, 2^J}N^{-C/J}|L|.
\]
To complete the proof, we must show that $|L'|\gg |L|^{1-\gamma}$, which follows from our choice of $J$ and $\delta$:
\[
N^{C\,\delta\, 2^J}N^{C/J} \ll N^{\gamma\epsilon} \leq |L|^{\gamma}.
\]
\end{proof}

\begin{proof}[Proof of Theorem~\ref{thm:5}]
  Let $N=|Y|$ and set
\[
J = \log_2 \left( \frac{\log_2 N}{\log_2\log_2 N} \right).
\]
Then $N^{1/2^J}=\log_2 N$, so $(\alpha/2)^{2^J}\geq 1/|Y|$ for $N$ sufficiently large.
Since
\begin{equation}
  \label{eq:10xx}
  \left( \frac\alpha2 \right)^{2^J} = N^{-\log_2(2/\alpha)/\log_2\log_2 N},
\end{equation}
the constraint $|Y|\leq (\alpha/2)^{2^J}p$ follows from the condition
\[
N^{1-\log_2(2/\alpha)/\log_2\log_2 N}\leq p.
\]
Thus we may apply Theorem~\ref{thm:4} to find a subset $L'\subseteq L$ of either parallel or concurrent lines such that
\[
|L'|\gg \left( \frac\alpha2 \right)^{C\, 2^J} |Y|^{-C/J}|L|.
\]

Since the lines of $L$ are in general position, we have $|L'|\leq 2$.
Thus
\[
|L| \ll \left( \frac 2\alpha \right)^{C\, 2^J} N^{C/J}.
\]
For $N$ sufficiently large, $J\gg \log_2\log_2 N$, by (\ref{eq:10xx}) we have
\[
|L| \ll N^{C \frac{1-\log(\alpha)}{\log\log N}}.
\]
\end{proof}

\begin{proof}[Proof of Theorem~\ref{thm:2}]
  Suppose that (\ref{eq:5}) is false.
Let $Y= AC\cup (A+B)$.
Then $|Y|\leq K|A|$.

Let $L$ denote the set of lines of the form $y=c(x-b)$ with $b\in B$ and $c\in C$.
Each line $\ell$ in $L$ satisfies $|Y\cap \ell(Y)|\geq |A| \geq \frac 1K |Y|$, hence $L$ is a set of $\alpha$-rich lines in $Y\times Y$ with $\alpha=1/K$.

The constraints on $K$ imply that $(\alpha/2)^{2^J}\geq 1/|Y|$ and $|Y|\leq (\alpha/2)^{2^J}p$, if $\F=\F_p$.
Thus by Theorem \ref{thm:9x}, there is subset $L'\subseteq L$ consisting of either parallel or concurrent lines with size
\[
|L'| \gg \left( \frac\alpha2 \right)^{C 2^J} |Y|^{-C/J} |L|.
\]

Since $L$ contains at most $|B|$ parallel lines and at most $|C|$ concurrent lines, we have
\[
\max(|B|,|C|) \gg \left( \frac\alpha2 \right)^{C 2^J} |Y|^{-C/J} |B||C|,
\]
hence
\[
\min(|B|,|C|) \ll (2K)^{C 2^J} |Y|^{C/J}.
\]
\end{proof}

\begin{remark}
  Theorem~\ref{thm:2} can be proved directly from Theorem~\ref{thm:9x} by noting that the transformations $x\mapsto c(x-b)$ are contained in $\sym_\alpha(Y)$ for $\alpha=1/K$.
\end{remark}

\subsection{Proof of Theorem~\ref{thm:9x}}
\label{sec:proof-theor-refthm:9}

Theorem~\ref{thm:9x} follows from a general inverse theorem for groups actions, which is a group action version of (asymmetric) \bsg{} theorem \cite{murphy2017group}.
In addition to this general inverse theorem, we need two other inputs specific to the action of $\aff 1\F$ on $\F$ for $\F=\C$ and $\F=\F_p$: 
\begin{enumerate}
\item a \emph{product theorem}\/ for $\aff 1\F$, and 
\item \emph{bounds}\/ for the size of $\sym_\alpha(Y)$.
\end{enumerate}

\subsubsection{Group action version of the (asymmetric) \bsg{} theorem}

First, we state the group action version of the \bsg{} theorem from \cite{murphy2017group}.
We simplify the statement slightly, and specialize to $\aff 1\F$ acting on $\F$.
\begin{thm}
\label{thm:1}
There is an absolute constant $C>0$ such that the following holds.

Let $Y$ be a finite subset of $\F$ and let $A$ be a finite subset of $\aff 1\F$.
Given a number $0<\alpha<1$ and an integer $J\geq 0$, define
\begin{equation}
  \label{eq:1}
\alpha_J=2 \left( \frac\alpha2  \right)^{2^J}\andd  K= \left( \frac{|\sym_{\alpha_J}(Y)|}{|A|} \right)^{1/J}.
\end{equation}

If $A\subseteq\sym_\alpha(Y)$, then
\begin{enumerate}
\item there is an element $g_*$ in $G$ and a finite subset $A_*\subseteq G$ such that
\begin{equation}
  \label{eq:2}
g_*^{-1}  A_*\subseteq \sym_{\alpha_J}(Y)
\end{equation}
and
\begin{equation}
  \label{eq:3}
  |A_*^3| \ll \left( \frac{K}{\alpha_J} \right)^C|A_*|,
\end{equation}
\item for any subset $S\subseteq G$ there is an element $g$ in $G$ such that
\begin{equation}
  \label{eq:4}
  |A\cap gS| \gg \left( \frac{\alpha_J}{K} \right)^C \frac{|S\cap A_*|}{|A_*|} |A|.
\end{equation}
\end{enumerate}
\end{thm}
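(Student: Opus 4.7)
The plan is to adapt the proof of the asymmetric Balog-Szemer\'edi-Gowers theorem (cf.\ \cite[Theorem 2.35]{tao2010additive}) to the setting of approximate symmetries, replacing abelian sumset/Ruzsa calculus by product-set calculus in $G = \aff 1\F$ and replacing the ``additive energy'' of $A$ by an \emph{action energy}, namely the number of pairs $(a,a') \in A \times A$ for which the ratio $a^{-1}a'$ preserves a large portion of $Y$. The argument breaks into three phases: (i) an iterated Cauchy-Schwarz amplification that places $2^J$-fold alternating products in $\sym_{\alpha_J}(Y)$; (ii) extraction of a ``core'' subset $A_*$ with small tripling via a Balog-Szemer\'edi-Gowers graph argument; (iii) a Ruzsa-covering transfer giving the coset-intersection bound (\ref{eq:4}).

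For phase (i), the base step rests on the observation that if $a, a' \in \sym_\alpha(Y)$ then $a^{-1}a'$ maps $Y \cap (a')^{-1}Y$ into $Y$, so a Cauchy-Schwarz argument yields at least $|A|^2 / |\sym_{\alpha^2/2}(Y)|$ pairs $(a,a')$ with $a^{-1}a' \in \sym_{\alpha^2/2}(Y)$. Iterating $J$ times on successively popular subsets of $A$ produces at least $|A|^{2^J}/K^J$ tuples $(a_1,\dots,a_{2^J}) \in A^{2^J}$ whose alternating product lies in $\sym_{\alpha_J}(Y)$; the factor $2$ in $\alpha_J = 2(\alpha/2)^{2^J}$ is precisely what absorbs the accumulated losses from these Cauchy-Schwarz steps.

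For phases (ii) and (iii), a pigeonhole applied to the alternating product of phase (i) isolates a single element $g_*$ with at least $|A|^{2^J}/(K^J |\sym_{\alpha_J}(Y)|)$ representations; the elements of $A$ appearing in a distinguished slot of such a representation, after translation by $g_*$, form the desired set $A_*$ with $g_*^{-1} A_* \subseteq \sym_{\alpha_J}(Y)$. The tripling bound $|A_*^3| \ll (K/\alpha_J)^C |A_*|$ then follows from a path-counting argument in the bipartite graph on $A \times A$ whose edges encode ``good ratios'', combined with the non-abelian Pl\"unnecke-Ruzsa inequality (valid in any group) used to pass from small doubling to small tripling. Conclusion~(2) of the theorem is a standard Ruzsa covering: since $A$ is essentially covered by few left-translates of $A_*$, averaging the covering map over $G$ produces an element $g$ with $|A \cap gS| \gg (\alpha_J/K)^C \cdot |S \cap A_*|/|A_*| \cdot |A|$ for any $S \subseteq G$.

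The main obstacle will be phase~(i): each Cauchy-Schwarz step nominally costs a factor of $\alpha$, so $J$ naive iterations would give $\alpha^{2^J}$ rather than $(\alpha/2)^{2^J}$. Threading the iteration to keep the overall loss at only $(\alpha/2)^{2^J}$ requires carefully refining the popular subset at each stage (rather than re-using the full set $A$) and tracking the interaction of these refinements with the ambient symmetry set $\sym_{\alpha_J}(Y)$. A secondary subtlety is that the non-commutativity of $G$ forces one to distinguish left and right cosets throughout and to use the non-abelian Pl\"unnecke-Ruzsa inequality in phase~(ii); this is smoother here than in a general non-abelian product theorem because the eventual structure in Theorem~\ref{thm:9x} lives in cosets of abelian subgroups.
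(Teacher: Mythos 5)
Your proposal misses the central mechanism of the asymmetric BSG argument and as a result would not recover the bound in terms of $K=(|\sym_{\alpha_J}(Y)|/|A|)^{1/J}$. The paper's proof builds, via the uniform approximate closure lemma, a chain of sets $A_0=A$, $A_{j+1}=A_j^{-1}\pp{E_j}A_j$ with $A_j\subseteq\sym_{\alpha_j}(Y)$ and a transfer inequality $|A_j\cap a_jS|/|A_j|\gtrapprox\alpha_j^2\,|A_{j+1}\cap S|/|A_{j+1}|$ at each step. The \emph{key pigeonhole is over levels} $j\in\{0,\dots,J-1\}$: since $\prod_j |A_{j+1}|/|A_j| = |A_J|/|A_0|\leq K^J$, some level has $|A_{j+1}|\leq K|A_j|$, i.e.\ a small partial product set, and BSG is applied there. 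Your proposal instead pigeonholes over \emph{elements} of $\sym_{\alpha_J}(Y)$ to find a popular $g_*$; but this gives a representation count of order $T/|\sym_{\alpha_J}(Y)|$ with no route to a tripling bound in terms of $K$ — the point of the asymmetric statement is precisely that $|A|$ may be far smaller than $|\sym_{\alpha_J}(Y)|$, which makes an element-pigeonhole useless. Similarly, conclusion~(2) is not a Ruzsa covering of $A$ by translates of $A_*$ (no such covering is established); it is obtained by iterating the per-level transfer inequality from level $j$ back down to level $0$, which is why the cumulative factor $\prod_{i<j}\alpha_i^2\gg\alpha_J^{O(1)}$ appears.

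Two further inaccuracies: the base Cauchy–Schwarz step yields at least $\tfrac{\alpha^2}{2}|A|^2$ pairs $(a,a')$ with $a^{-1}a'\in\sym_{\alpha^2/2}(Y)$, not $|A|^2/|\sym_{\alpha^2/2}(Y)|$; and your ``main obstacle'' paragraph has the inequality backwards — $(\alpha/2)^{2^J}$ is \emph{smaller} than $\alpha^{2^J}$, and the factor $2$ in $\alpha_J=2(\alpha/2)^{2^J}$ arises automatically from the recursion $\alpha_{j+1}=\alpha_j^2/2$ coming out of the approximate closure lemma, requiring no extra refinement trick.
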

Part (1)  of Theorem~\ref{thm:1} says that some symmetry set of $Y$ contains a set $A_*$ with small tripling, which will allow us to apply the product theorems, stated next, to find a coset $S$ of an abelian subgroup such that $|A_*\cap S|$ is large.
Part (2) of Theorem~\ref{thm:1} then says that $|A\cap gS|$ is large as well, which gives us the desired structure in $A$.

\subsubsection{Product theorems for $\aff 1\C$ and $\aff 1{\F_p}$}

The following product theorem is a special case of a product theorem for solvable groups of $GL_n(\C)$, due to  Breuillard and Green \cite[Theorem 1.4']{breuillard2011approximate-b}.
\begin{thm}[Product theorem for $\aff 1\C$]
  \label{thm:12}
Fix $K\geq 1$.
If $A$ is a finite subset of $\aff 1\C$ such that $|A^3|\leq K|A|$, there is a subset $A'\subseteq A$ with size $|A'|\geq K^{-C}|A|$ that is contained in a coset of an abelian subgroup of $\aff 1\C$.
\end{thm}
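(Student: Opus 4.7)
The plan is to exploit that $\aff 1{\C}$ is metabelian: the projection $\pi\colon(x\mapsto ax+b)\mapsto a$ has kernel the translation subgroup $U$ and abelian quotient $\C^\times$. As noted in Section~\ref{sec:geom-dict-proof}, the maximal abelian subgroups of $\aff 1{\C}$ are precisely $U$ and the homothety subgroups $\stab(x_0)$, so it suffices to produce a subset $A'\subseteq A$ of size $\gg K^{-C}|A|$ either with a common dilation (landing in a coset of $U$) or with a common fixed point (landing in a coset of some $\stab(x_0)$).

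First I would normalize by standard Ruzsa calculus: the hypothesis $|A^3|\leq K|A|$ lets me replace $A$ by a symmetric set $A_0\subseteq AA^{-1}$ of comparable size with $K^{O(1)}$-bounded tripling, so that any large abelian-coset subset of $A_0$ pulls back to one in $A$ with only polynomial loss. I then split on the size of the projection. Case (i): if $|\pi(A_0)|\leq K^C$, pigeonholing on the fibers of $\pi$ gives a coset of $U$ containing $\gg K^{-C}|A|$ elements of $A_0$ and we are done. Case (ii): otherwise $|\pi(A_0)|\gg K^{-C}|A_0|$, so $\gg K^{-C}|A|$ elements of $A_0$ have non-identity dilation and hence a well-defined fixed point $x_g=b/(1-a)$, and the task reduces to forcing many of these fixed points to coincide.

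For Case (ii) the geometric content is that two non-trivial affine maps commute in $\aff 1{\C}$ iff they share a fixed point, so a coset of a homothety subgroup is exactly a set of non-trivial maps with a common fixed point. I would quantify this by showing that if $\{x_g:g\in A_0\}$ is spread across many values, then $|A_0^3|$ would be forced to be much larger than $K|A_0|$, because for generic $g_1,g_2,g_3\in A_0$ with distinct fixed points the tuple $(x_{g_1},x_{g_2},x_{g_3},\pi(g_1),\pi(g_2),\pi(g_3))$ is essentially determined by the product $g_1g_2g_3$. Some form of this injectivity argument should yield that $\gg K^{-C}|A|$ elements of $A$ share a single fixed point $x_0$ and therefore lie in $\stab(x_0)$.

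The main obstacle is making the fixed-point concentration step quantitative with polynomial dependence on $K$. Rather than carrying out this combinatorial analysis by hand, I would invoke Breuillard and Green's classification of approximate subgroups of solvable linear groups: embed $\aff 1{\C}$ as the Borel subgroup of $GL_2(\C)$, observe that $A_0$ is a $K^{O(1)}$-approximate group (by Tao's non-commutative Plünnecke-Ruzsa machinery), and apply \cite[Theorem~1.4']{breuillard2011approximate-b} to land inside a coset of a nilpotent subgroup. Since in $\aff 1{\C}$ the maximal nilpotent subgroups are precisely the abelian subgroups $U$ and $\stab(x_0)$, this immediately yields the desired $A'\subseteq A$ with $|A'|\geq K^{-C}|A|$ contained in a coset of an abelian subgroup.
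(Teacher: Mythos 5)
The fallback route you take—cite Breuillard--Green after upgrading $A$ to a $K^{O(1)}$-approximate group, and observe that maximal nilpotent subgroups of $\aff 1\C$ are abelian—is valid, and in fact the main text of the paper introduces Theorem~\ref{thm:12} exactly as a specialization of \cite[Theorem 1.4']{breuillard2011approximate-b}. Your explicit justification that nilpotency forces abelianness here (a nilpotent subgroup meeting both $U$ nontrivially and the complement of $U$ would have a nonterminating lower central series, so must lie in $U$ or in a torus) is a useful detail that the paper leaves implicit. But the paper's Appendix~\ref{sec:prod-theor-affine} gives a genuinely different, self-contained proof: Lemma~\ref{lem:6} picks a pivot $x\in A$ with $|[A,x]|>1$, uses the orbit-stabilizer theorem for sets (Lemma~\ref{lem:3}) to produce sets $B\subseteq\C$, $C\subseteq\C^\times$ of sizes controlled by $|A|$ and $|A/U|$, and shows $K^{10}|A|\gg |A/U|\,|B-BC|$; the Elekes-type sum-product estimate $|B-BC|\gg|B||C|^{1/2}$ (Proposition~\ref{prop:12}, via \ST{}) then forces $|A/U|\ll K^{20}$, and pigeonholing on the $U$-fibers finishes. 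The payoff of the paper's route is that it requires only the orbit-stabilizer lemma plus a sum-product estimate over the base field, which is precisely what allows the uniform treatment of $\C$ and $\F_p$ in Theorems~\ref{thm:17} and \ref{thm:18}; your Breuillard--Green citation is specific to $\C$ and would need to be replaced by Helfgott's result to cover $\F_p$.

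Your attempted direct argument in Case (ii), before the fallback, does not work as sketched. The claim that for generic $g_1,g_2,g_3$ with distinct fixed points the 6-parameter tuple $(x_{g_1},x_{g_2},x_{g_3},\pi(g_1),\pi(g_2),\pi(g_3))$ is ``essentially determined by the product $g_1g_2g_3$'' cannot be correct: the product is a single element of $\aff 1\C$, carrying only two complex parameters, so the map from triples to products is far from injective and no dimension count can salvage it. You do flag this as the main obstacle and escape to Breuillard--Green, so the proposal as a whole survives, but the direct route would need a genuinely different quantitative mechanism—this is exactly the role played by the orbit-stabilizer/sum-product combination in the paper's Lemma~\ref{lem:6}.
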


Over $\F_p$, Helfgott has proved a similar theorem \cite[Proposition 4.8]{helfgott2015growth}.
\begin{thm}[Product theorem for $\aff 1{\F_p}$]
  \label{thm:13}
Let $G=\aff 1{\F_p}$, let $U$ be the translation subgroup, and let $\pi\colon G\to G/U$ be the quotient map.

For a subset $A\subseteq G$, if there is a constant $K\geq 1$ such that $|A^3|\leq K|A|$, then for an absolute constant $C>0$ we have either
\begin{equation}
  \label{eq:33}
  |A\cap T|\geq \frac 13|A|
\end{equation}
for some torus $T$,
\begin{equation}
  \label{eq:34}
  |\pi(A)|\ll K^C,
\end{equation}
or
\begin{equation}
  \label{eq:35}
  K^C|A|\gg |\pi(A)|p.
\end{equation}
\end{thm}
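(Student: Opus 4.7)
My plan is to use the commutator structure of $G = \aff{1}{\F_p}$ parametrized by fixed points. Writing elements as pairs $(a,b)$ representing $x\mapsto ax+b$ and setting $c = b/(1-a)$ (the fixed point of the affine map when $a\ne 1$), a direct computation yields the key identity
\[
\bigl[(a_1, b_1),\, (a_2, b_2)\bigr] = \bigl(1,\; (c_1 - c_2)(1 - a_1)(1 - a_2)\bigr) \in U.
\]
Thus two affine maps commute precisely when they share a fixed point, i.e., lie in a common torus $\stab(c)$. Under the tripling hypothesis $|A^3| \le K|A|$, the Pl\"unnecke--Ruzsa inequality bounds the commutator set $[A,A] \subseteq U \cap A^4(A^{-1})^4$ by $K^C|A|$.

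First I would dispose of case (\ref{eq:34}): if $|\pi(A)| \ll K^C$ we are done, and this also absorbs the situation $A \subseteq U$ where $\pi(A) = \{1\}$. Otherwise $|\pi(A)| \gg K^C$, and after discarding the fiber $A_1 = A \cap U$ (which, if it carries $\ge |A|/3$ mass, can be analyzed separately inside the abelian group $U$), the fixed-point map $\phi\colon (a,b)\mapsto b/(1-a)$ is well-defined on a positive proportion of $A$. I would pigeonhole on $\phi$: if some fiber $\phi^{-1}(c_0) \cap A$ contains at least $|A|/3$ elements, those all lie in the torus $\stab(c_0)$ and case (\ref{eq:33}) holds directly.

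Assuming no such popular fixed point exists, I would leverage the commutator identity to produce many translations. The bilinear form $(c_1,c_2,a_1,a_2)\mapsto (c_1-c_2)(1-a_1)(1-a_2)$, combined with a Pl\"unnecke/energy estimate, forces $|[A,A]|$ to be large relative to $|\phi(A)|$ and $|\pi(A)|$. Comparing with the Pl\"unnecke upper bound $\ll K^{O(1)}|A|$ and the trivial bound $|[A,A]|\le p$ yields the trichotomy: either $[A,A]$ saturates $U$ with $|[A,A]|\gg p$, in which case the resulting inequality unwinds to $|A|\gg |\pi(A)|p/K^{O(1)}$ and case (\ref{eq:35}) holds; or the Pl\"unnecke bound forces $|\pi(A)|\ll K^{O(1)}$, a contradiction with our standing assumption, so case (\ref{eq:33}) or (\ref{eq:34}) must in fact have held.

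The main obstacle is the lower bound on $|[A,A]|$: converting the \emph{failure} of concentration of $\phi$ into a quantitative lower bound on the bilinear difference-product set $(c_1-c_2)(1-a_1)(1-a_2)$ requires a Balog--Szemer\'edi--Gowers-style refinement on $\phi$ combined with Pl\"unnecke in the multiplicative group $\F_p^*$, since a naive pigeonhole collapses too many pairs when $\phi$ has irregular fiber sizes. Handling the degenerate fiber $a = 1$ cleanly, and tracking Ruzsa constants through the length-four commutator word together with this pivot argument, are the technical delicacies I expect to be the hardest.
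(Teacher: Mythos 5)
Your proposal shares the paper's underlying philosophy---exploit non-commutativity in $\aff{1}{\F_p}$ through commutators landing in $U$, apply a sum-product estimate over $\F_p$, and compare with a Pl\"unnecke--Ruzsa upper bound from the tripling hypothesis---and your commutator identity $[(a_1,b_1),(a_2,b_2)] = (1, (c_1-c_2)(1-a_1)(1-a_2))$ is exactly the right structural observation. But there are two places where the proposal falls short of a proof, and both are addressed in the paper by a tool you do not invoke: the \emph{orbit-stabilizer theorem for sets} (Lemma~\ref{lem:3}, essentially from Helfgott).

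First, you work with the full commutator set $[A,A]$ and acknowledge that lower-bounding $|[A,A]|$ requires a BSG-style refinement to handle irregular fiber sizes of the fixed-point map $\phi$. The paper sidesteps this entirely by a pivot argument: once one third of $A$ is \emph{not} in an abelian subgroup, there is a single $x\in A$ with $|x^A x^{-1}| > 1$. Setting $S := x^A x^{-1} \subseteq U$ and applying orbit-stabilizer for sets to the conjugation action gives an $a_0 \in A$ with $T := (a_0^{-1}A)\cap C(x)$ satisfying $|S||T|\geq |A|$ and $|T|\leq|\pi(A)|$. The sum-product input is then applied to $|B - BC|$ where $B$ comes from $S$ (translations) and $C$ from $T$ (slopes), via the multiplicative conjugation action of $T$ on $S$. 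No BSG refinement is needed.

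Second, and more seriously, your derivation of case~(\ref{eq:35}) does not close. You claim that $|[A,A]|\gg p$ together with $|[A,A]|\ll K^{O(1)}|A|$ ``unwinds to $|A| \gg |\pi(A)|p/K^{O(1)}$,'' but those two inequalities only give $|A|\gg p/K^{O(1)}$; the factor $|\pi(A)|$ does not appear. The paper produces that factor from a \emph{second} application of orbit-stabilizer for sets, namely $|BA|\geq |A\cap\stab(x)|\,|B(x)|$ applied to the action on $G/U$: this gives $|A^3A^{-1}A^2A^{-3}| \geq |(A^2A^{-1}A^2A^{-3})\cap U|\cdot|\pi(A)|$, which is how $|\pi(A)|$ gets multiplied against the commutator count before the sum-product estimate and Pl\"unnecke bound are brought in. Without this (or some substitute that spreads the intersection with $U$ over $|\pi(A)|$ cosets), the stated trichotomy does not follow from your ingredients.

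So the proposal is on the right track conceptually but has a genuine gap in both the lower-bound step and in producing the $|\pi(A)|$ factor; the orbit-stabilizer lemma for sets is the missing device that resolves both simultaneously.
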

Theorems~\ref{thm:12} and \ref{thm:13} can be proved by combining the orbit-stabilizer theorem for sets \cite[Lemma 4.1]{helfgott2015growth} with a pivot argument or sum-product theorem. 
For completeness, we include proofs of Theorems~\ref{thm:12} and \ref{thm:13} in Appendix~\ref{sec:prod-theor-affine}, using the sum-product theorems from \cite{roche-newton2016sum-product,yazici2015growth}.

Since $|\pi(A)|$ is the number of cosets of $U$ needed to cover $A$, if (\ref{eq:34}) holds, then there is an element $g$ in $G$ such that $|A\cap gU|\gg K^{-C}|A|$.
We also know that $|A|\ll |\sym_{\alpha_J}(Y)|$, and we will use this to draw a similar conclusion from (\ref{eq:35}) using the upper bounds for $|\sym_\alpha(Y)|$.

\subsubsection{Upper bounds for $|\sym_\alpha(Y)|$}

Finally, we quote upper bounds for the symmetry sets for the action of $\aff 1\F$ on $\F$.
\begin{thm}
  \label{thm:7}
Let $Y$ be a finite subset of $\F$ and let $\alpha$ be greater than $2/|Y|$.

If $\F=\C$, then
\[
|\sym_\alpha(Y)|\ll \alpha^{-3}|Y|.
\]
If $\F=\F_p$ and $|Y|\leq \frac\alpha2 p$, then
\[
|\sym_\alpha(Y)|\ll \alpha^{-4}|Y|.
\]
\end{thm}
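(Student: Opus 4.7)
The plan is to translate bounding $|\sym_\alpha(Y)|$ into a point-line incidence problem on the grid $P=Y\times Y$. If $g\in\aff 1\F$ is the map $x\mapsto ax+b$ with $a\ne 0$, then $|Y\cap gY|$ equals the number of $y\in Y$ with $ay+b\in Y$, which is exactly the number of points of $P$ incident to the non-vertical, non-horizontal line $\ell_g : y=ax+b$. Hence $\sym_\alpha(Y)$ is in bijection with a set of $\alpha$-rich lines in the $N\times N$ grid $P$, and the approach is to apply the appropriate Szemer\'edi--Trotter-type incidence bound in each field.

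For $\F=\C$, I would invoke T\'oth's complex Szemer\'edi--Trotter theorem, which gives $I(P,L)\ll (|P||L|)^{2/3}+|P|+|L|$ for any finite point and line sets in $\C^2$. Setting $|P|=N^2$ and $L=\sym_\alpha(Y)$, the $\alpha$-richness gives $I(P,L)\geq \alpha N|L|$. The hypothesis $\alpha>2/N$ absorbs the trivial $|L|$ term; the case where $N^{4/3}|L|^{2/3}$ dominates rearranges to $|L|\ll \alpha^{-3}N$, while the case where $|P|=N^2$ dominates gives only the smaller bound $|L|\ll \alpha^{-1}N$. This yields the claim over $\C$.

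For $\F=\F_p$, I would instead use a positive-characteristic Szemer\'edi--Trotter-type bound for Cartesian products, such as the Stevens--de Zeeuw incidence bound: in the appropriate regime one has $I(A\times B, L)\ll |A|^{3/4}|B|^{1/2}|L|^{3/4}+|L|$ for $|A|\le |B|$. Taking $A=B=Y$ and $L=\sym_\alpha(Y)$, the dominant case $\alpha N|L|\ll N^{5/4}|L|^{3/4}$ rearranges to the claimed $|L|\ll \alpha^{-4}N$. The hypothesis $|Y|\le \tfrac{1}{2}\alpha p$ is exactly what places the parameters inside the non-trivial regime of the $\F_p$ incidence bound, equivalently, the regime in which the trivial term $|P||L|/p$ appearing in Vinh-type bounds does not dominate the incidence count.

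The only real subtlety is checking the range hypotheses of the $\F_p$ incidence theorem and verifying that they follow from $|Y|\le\tfrac{1}{2}\alpha p$ together with $\alpha>2/|Y|$; once this bookkeeping is done, the proof reduces to the dictionary between symmetry sets and rich lines together with a direct invocation of the chosen incidence bound.
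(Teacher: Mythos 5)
Your strategy — translating $\sym_\alpha(Y)$ into a family of $\alpha$-rich lines in the grid $Y\times Y$ and then invoking a Szemer\'edi--Trotter-type incidence bound — is exactly what the paper indicates for this theorem (the full proof lives in \cite{murphy2017group}; the paper describes the ingredients as the complex \ST{} theorem for $\C$ and the Stevens--de Zeeuw bound ``combined with some additional arguments'' \cite{murphy2017results} for $\F_p$). Your $\C$ case is correct as written: T\'oth's bound plus the hypothesis $\alpha>2/|Y|$ to absorb the trivial $|L|$ term yields $|\sym_\alpha(Y)|\ll\alpha^{-3}|Y|$.

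The $\F_p$ case has a genuine gap that you wave past as ``bookkeeping.'' The Stevens--de Zeeuw incidence bound over $\F_p$ carries a range hypothesis of the form $|A|\cdot|L|\ll p^2$; taking $A=Y$ and $L=\sym_\alpha(Y)$, this is precisely the quantity you are trying to control, so it is not available a priori. The hypothesis $|Y|\le\tfrac12\alpha p$ alone does not yield it: if $|\sym_\alpha(Y)|$ were as large as $\alpha^{-4}|Y|$, then $|Y|\cdot|\sym_\alpha(Y)|$ could be of order $\alpha^{-2}p^2$, which exceeds $p^2$ once $\alpha$ is small. This is exactly why the paper flags the $\F_p$ case as needing Stevens--de Zeeuw \emph{combined with additional arguments}. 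One natural fix is a bootstrap: first apply a Vinh-type incidence bound, which under $|Y|\le\tfrac12\alpha p$ gives the crude estimate $|\sym_\alpha(Y)|\ll\alpha^{-2}p$; then split into the regime $|Y|\gtrsim\alpha^2 p$, where $\alpha^{-2}p\ll\alpha^{-4}|Y|$ and you are already done, and the regime $|Y|\lesssim\alpha^2 p$, where the crude bound places $(Y,\sym_\alpha(Y))$ inside the Stevens--de Zeeuw range. Without a step of this kind, the direct invocation of Stevens--de Zeeuw is not justified.
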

See \cite{murphy2017group} for a proof of Theorem~\ref{thm:7},  which is based on the \ST{} theorem \cite{szemeredi1983extremal,toth2015szemeredi-trotter,zahl2015szemeredi-trotter, solymosi2007number} for $\F=\C$ or the Stevens-de Zeeuw bound \cite{stevens2017improved} combined with some additional arguments \cite{murphy2017results} for $\F=\F_p$.

\begin{remark}
  Weaker bounds than those of Theorem~\ref{thm:7} suffice for the proof of Theorem~\ref{thm:9x}.
We give specifics after the proof.
This is in constrast to Elekes' proof of Theorem~\ref{thm:22}, which depends crucially on having bounds for $|\sym_\alpha(Y)|$ that are \emph{linear}\/ in $|Y|$.
\end{remark}

\subsubsection{Proof of Theorem~\ref{thm:9x}}

\begin{proof}[Proof of Theorem~\ref{thm:9x}]
  The condition $(\alpha/2)^{2^J}\geq 1/|Y|$ implies that $\alpha_J\geq 2/|Y|$, and the condition $|Y|\leq (\alpha/2)^{2^J}p$ implies that $|Y|\leq \frac 12\alpha p$.
Hence by Theorem~\ref{thm:7} we have
\begin{equation}
  \label{eq:14x}
  K \leq |\sym_{\alpha_J}(Y)|^{1/J} \ll \left( \frac\alpha 2 \right)^{-C} |Y|^{1/J}.
\end{equation}

By Theorem~\ref{thm:1}, there is a constant $C>0$, an element $g_*$ in $G$, and a subset $A_*$ of $g_*\sym_{\alpha_J}(Y)$ such that
\begin{equation}
  \label{eq:40}
  |A_*^3|\ll (\alpha_J^{-1}K)^{C}|A_*|.
\end{equation}

Now, suppose that $\F=\C$.
By (\ref{eq:40}) and Theorem~\ref{thm:12}, there is an element $g$ in $G$ and an abelian subgroup $H$ of $G$ such that
\[
|A_*\cap gH|\gg (\alpha_J^{-1}K)^{-C}|A_*|.
\]
By equation~(\ref{eq:4}) of Theorem~\ref{thm:1}, there is an element $g'$ in $G$ such that
\begin{equation}
  \label{eq:10x}
  |A\cap g'gH|\gg \alpha_J^2 (\alpha_J^{-1}K)^{-C}\frac{|A_*\cap gS|}{|A_*|} |A_0| \gg \alpha_J^C |Y|^{-C/J}|A_0|.
\end{equation}
Since $\alpha_J=2(\alpha/2)^{2^J}$, the proof is complete.

If $\F=\F_p$, then we apply Theorem~\ref{thm:13} in place of Theorem~\ref{thm:12}.
If (\ref{eq:33}) or (\ref{eq:34}) hold, then the proof is the same as in the case of $\F=\C$, so suppose that (\ref{eq:35}) holds:
\begin{equation}
  \label{eq:9x}
  \left( \frac K{\alpha_J} \right)^C|A_*|\gg |\pi(A_*)|p.
\end{equation}
Since $A_*\subseteq g_*\sym_{\alpha_J}(Y)$, by Theorem~\ref{thm:7} we have 
\begin{equation}
  \label{eq:13x}
  |A_*|\leq |\sym_{\alpha_J}(Y)| \ll \alpha_J^{-4}|Y| \ll \alpha_J^{-3}p.
\end{equation}
Combining this with (\ref{eq:9x}) we have
\[
|\pi(A_*)| \ll \left( \frac K{\alpha_J} \right)^C,
\]
which implies that there is an affine transformation $g$ such that
\[
|A_*\cap gU| \gg\left( \frac{\alpha_J}K \right)^C|A_*|.
\]
The rest of the proof is the same as in (\ref{eq:10x}).
\end{proof}

\begin{remark}
  Instead of using Theorem~\ref{thm:7} to prove (\ref{eq:14x}), we could have used the bound $|\sym_\alpha(Y)|\ll \alpha^{-2}|Y|^2$, which follows from the Cauchy-Schwarz inequality and holds for $\aff 1\F\actson \F$ for any field $\F$, or even the trivial bound $|\sym_\alpha(Y)|\leq |Y|^4$, which holds because $|Y|^2$ points support at most $|Y|^4$ lines containing at least two elements of the point set.

Equation~(\ref{eq:13x}) could be proved using Vinh's incidence bound \cite{vinh2014point-line}, which can also be proved using only Cauchy-Schwarz \cite{murphy2016point-line}.
\end{remark}

% \section{Conclusion}
% \label{sec:conclusion}

% The main question left open by this paper is the sharpness of the results.
% If $Y\subseteq\C$, then we have shown that the number $RLGP(\C,Y\alpha)$ of $\alpha$-rich lines in $Y\times Y$ in general position satisfies
% \[
% \sqrt{\frac{\log |Y|}{\log\log |Y|} \ll_\alpha \log RLGP(\C,Y,\alpha) \ll_\alpha \frac{\log |Y|}{\log\log |Y|}.
% \]
% Which bound is ``the truth''?

% The limitations of the method used to prove the upper bound seem ubiquitous: Bourgain, Glibichuk, and Konyagin's bounds for exponential sums over multiplicative subgroups of $\F_p$ are non-trivial for subgroups of size $\gg p^{1/\log\log p}$, and similar limitations exist for the asymmetric sum-product theorem.
% It may be that some other method or a variation of this method could further improve the upper bound; we note that we have not used general position in the strongest sense---we merely require $O(1)$ concentration in cosets of abelian subgroups.

% For the lower bounds, there are two obvious avenues for improvement: optimize the choice of parameters or improve Klawe's construction.

% --- Appendix --- --- --- --- --- --- ---
\appendix % switch to appendix mode

\section{Proof of group action \bsg{}}
\label{sec:proof-group-action-bsg}

In this section, we sketch the proof of the group action version the asymmetric \bsg{} theorem, which we recall here.
This theorem is proved in more generality (and in full detail) in \cite{murphy2017group}.
\begin{repthm}{thm:1}
There is an absolute constant $C>0$ such that the following holds.

Let $Y$ be a finite subset of $\F$ and let $A$ be a finite subset of $\aff 1\F$.
Given a number $0<\alpha<1$ and an integer $J\geq 0$, define
\begin{equation}
  \label{eq:1x}
\alpha_J=2 \left( \frac\alpha2  \right)^{2^J}\andd  K= \left( \frac{|\sym_{\alpha_J}(Y)|}{|A|} \right)^{1/J}.
\end{equation}

If $A\subseteq\sym_\alpha(Y)$, then
\begin{enumerate}
\item there is an element $g_*$ in $G$ and a finite subset $A_*\subseteq G$ such that
\begin{equation}
  \label{eq:2x}
g_*^{-1}  A_*\subseteq \sym_{\alpha_J}(Y)
\end{equation}
and
\begin{equation}
  \label{eq:3x}
  |A_*^3| \ll \left( \frac{K}{\alpha_J} \right)^C|A_*|,
\end{equation}
\item for any subset $H\subseteq G$ there is an element $g$ in $G$ such that
\begin{equation}
  \label{eq:4x}
  |A\cap gH| \gg \left( \frac{\alpha_J}{K} \right)^C \frac{|H\cap A_*|}{|A_*|} |A|.
\end{equation}
\end{enumerate}
\end{repthm}

To understand how our method works, we will first revisit Elekes' proof of Theorem~\ref{thm:22}.
The key idea is that \emph{symmetry sets behave weakly like groups}.
In fact, $\sym_1(Y)$ is a group: it is the stabilizer of $Y$ under the induced action of on subsets of $X$.
For $\alpha<1$, a weak form of multiplicative closure holds.
\begin{prop}[Approximate multiplicative closure]
  \label{prop:300}
If $S$ is a non-empty subset of $\sym_\alpha(Y)$, then there exists a relation $E\subseteq S^{-1}\times S$ such that
\[
|E|\geq\frac{\alpha^2}2|S|^2\andd S^{-1}\pp{E}S\subseteq\sym_{\frac{\alpha^2}2}(Y).
\]
Further, $(S^{-1}\pp{E}S)^{-1}=S^{-1}\pp{E}S$.
\end{prop}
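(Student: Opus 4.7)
The plan is to exploit approximate closure via a Cauchy--Schwarz collision argument, analogous to the sumset proof of the standard \bsg{} lemma but adapted to a group action. The key input is that $s \in \sym_\alpha(Y)$ guarantees $|Y \cap sY| = |Y \cap s^{-1} Y| \geq \alpha |Y|$, so there are many $y \in Y$ with $sy \in Y$; collisions of the form $s_1 y_1 = s_2 y_2$ then certify that $s_1^{-1} s_2 \in \sym_\beta(Y)$ for a suitable $\beta$, and popular $(s_1,s_2)$ produce the relation $E$.

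First I would set up the incidence set $I = \{(s,y) \in S \times Y : sy \in Y\}$, which satisfies $|I| \geq \alpha|S||Y|$. Applying Cauchy--Schwarz to the fibres of the evaluation map $\pi \colon I \to Y$, $\pi(s,y) = sy$, gives
\[
\#\{((s_1,y_1),(s_2,y_2)) \in I \times I : s_1 y_1 = s_2 y_2\} = \sum_{y \in Y} |\pi^{-1}(y)|^2 \geq \frac{|I|^2}{|Y|} \geq \alpha^2 |S|^2 |Y|.
\]
Any such collision has $y_2 \in Y$ and $s_1^{-1} s_2 y_2 = y_1 \in Y$, so the collision count is at most $\sum_{(s_1,s_2) \in S \times S} |Y \cap s_1^{-1} s_2 Y|$. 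Combining,
\[
\sum_{(s_1,s_2) \in S \times S} |Y \cap s_1^{-1} s_2 Y| \;\geq\; \alpha^2 |S|^2 |Y|.
\]

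A standard popularity argument then extracts many good pairs: call $(s_1,s_2) \in S \times S$ \emph{good} if $|Y \cap s_1^{-1} s_2 Y| \geq \frac{\alpha^2}{2}|Y|$. Bad pairs contribute at most $|S|^2 \cdot \frac{\alpha^2}{2}|Y|$ to the sum, while good pairs contribute at most $|Y|$ each, so the number of good pairs is at least $\frac{\alpha^2}{2}|S|^2$. Setting $E := \{(s_1^{-1},s_2) : (s_1,s_2) \text{ good}\} \subseteq S^{-1} \times S$ yields $|E| \geq \frac{\alpha^2}{2}|S|^2$ and $S^{-1} \pp{E} S \subseteq \sym_{\alpha^2/2}(Y)$ by construction. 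The symmetry $(S^{-1} \pp{E} S)^{-1} = S^{-1} \pp{E} S$ follows from the elementary fact that $\sym_\beta(Y)$ is closed under inversion (since $|gY \cap Y| = |Y \cap g^{-1} Y|$), so the property of being ``good'' is invariant under swapping $s_1 \leftrightarrow s_2$, which translates on products to $g \leftrightarrow g^{-1}$.

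I do not expect any serious obstacle here; the argument is a direct popularity/Cauchy--Schwarz step. The one bookkeeping point that needs care is that the Cauchy--Schwarz lower bound counts only collisions with common image $s_1 y_1 = s_2 y_2$ lying in $Y$, whereas the upper bound by $\sum |Y \cap s_1^{-1} s_2 Y|$ drops that constraint---this is the correct direction of the inequality, so no issue arises, but one must resist the temptation to attempt the reverse.
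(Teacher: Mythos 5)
Your argument is correct and is precisely the Cauchy--Schwarz/popularity argument the paper gestures at (the paper does not write out a proof; it cites \cite[Proposition 3]{murphy2017group} and notes it generalizes \cite[Lemma 2.33]{tao2010additive} ``which follows easily from Cauchy-Schwarz''). The collision count, the dyadic thresholding at $\frac{\alpha^2}{2}|Y|$, the bijection $(s_1,s_2)\mapsto(s_1^{-1},s_2)$ to land in $S^{-1}\times S$, and the inversion-symmetry of $\sym_\beta(Y)$ are all handled correctly, so your proposal matches the intended proof.
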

This is \cite[Proposition 3]{murphy2017group}, which is a straightforward generalization of \cite[Lemma 2.33]{tao2010additive}, which follows easily from Cauchy-Schwarz.

To prove that Theorem~\ref{thm:22} follows from a product theorem, such as Theorem~\ref{thm:6}, we combine Proposition~\ref{prop:300} with the upper bounds of Theorem~\ref{thm:7}.
\begin{prop}
  \label{prop:7xx}
Let $\F$ be a field, and let $G=\aff 1\F$ act on $X=\F$ by affine transformations.
Let $A\subseteq G$ and $Y\subseteq X$ be finite subsets such that $A\subseteq\sym_\alpha(Y)$ and $|A|\geq |Y|$.
Then there is a subset $E\subseteq A\times A$ such that $|E|\geq\frac{\alpha^2}2 |A|^2$ and
\begin{enumerate}
\item if $\F=\C$, then $|A^{-1}\pp E A|\ll \alpha^{-6}|A|$,
\item if $\F=\F_p$ and $|Y|\leq \frac 12\alpha |Y|$, then $|A^{-1}\pp E A|\ll \alpha^{-8}|A|$.
\end{enumerate}
\end{prop}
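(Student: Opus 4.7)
The plan is to combine Proposition~\ref{prop:300} (approximate multiplicative closure of symmetry sets) with Theorem~\ref{thm:7} (the upper bounds on the size of a symmetry set).

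First I would apply Proposition~\ref{prop:300} with $S = A$, which produces a relation $E' \subseteq A^{-1} \times A$ of size at least $\tfrac{\alpha^2}{2}|A|^2$ satisfying
\[
A^{-1} \pp{E'} A \subseteq \sym_{\alpha^2/2}(Y).
\]
Transporting $E'$ along the bijection $a^{-1} \leftrightarrow a$ converts it into a relation $E \subseteq A \times A$ of the same size whose associated partial product set is exactly $A^{-1} \pp{E'} A$. This $E$ is the relation required by the statement, so it remains to control $|A^{-1}\pp{E}A|$ by bounding the symmetry set on the right-hand side.

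Next I would invoke Theorem~\ref{thm:7} with density parameter $\alpha^2/2$. Over $\C$ this gives $|\sym_{\alpha^2/2}(Y)| \ll (\alpha^2/2)^{-3}|Y| \ll \alpha^{-6}|Y|$; over $\F_p$, assuming the corresponding hypothesis $|Y| \leq \tfrac{1}{4}\alpha^2 p$ (which is the intended reading of part (2) of the statement, once the apparent typo is corrected), the same theorem gives $|\sym_{\alpha^2/2}(Y)| \ll \alpha^{-8}|Y|$. Combined with the assumption $|Y| \leq |A|$ and the containment above, the desired bounds on $|A^{-1} \pp{E} A|$ follow immediately.

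The only small technicality is that Theorem~\ref{thm:7} requires $\alpha^2/2 > 2/|Y|$, which can fail when $|Y|$ is very small compared to $1/\alpha^2$. In that regime one can either appeal to the elementary Cauchy--Schwarz bound $|\sym_{\alpha^2/2}(Y)| \ll \alpha^{-4}|Y|^2$ mentioned in the remark after Theorem~\ref{thm:7}, or simply note that $|A^{-1} \pp{E} A| \leq |A|^2$ is already absorbed into the implicit constant in this range. Since all the genuine combinatorial content is packaged in Proposition~\ref{prop:300} and Theorem~\ref{thm:7}, the present proposition is essentially a clean composition of the two, and I do not anticipate any substantial obstacle beyond the bookkeeping of parameters.
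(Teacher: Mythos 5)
Your proof is essentially the same as the paper's: the paper also simply applies Proposition~\ref{prop:300} to produce the relation $E$ and then invokes Theorem~\ref{thm:7} at density $\alpha^2/2$ to bound $|A^{-1}\pp{E}A|\leq|\sym_{\alpha^2/2}(Y)|$, using $|Y|\leq|A|$ to finish. Two of your side observations are correct improvements on what the paper writes: (i) the statement's condition for $\F=\F_p$ is indeed a typo, and your reading $|Y|\leq\frac14\alpha^2 p$ is the hypothesis one actually needs to feed Theorem~\ref{thm:7} at density $\alpha^2/2$ (the paper's own proof writes $|Y|\leq\frac12\alpha p$, which is too weak to invoke the theorem as stated); and (ii) the paper tacitly assumes $\alpha^2/2>2/|Y|$ and does not address the complementary regime, whereas your Cauchy--Schwarz fallback $|\sym_{\alpha^2/2}(Y)|\ll\alpha^{-4}|Y|^2\ll\alpha^{-6}|A|$ (using $|Y|\leq 4/\alpha^2$ there and $|Y|\leq|A|$) genuinely closes that gap. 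Your alternative remark that ``$|A^{-1}\pp{E}A|\leq|A|^2$ is absorbed'' would need the additional observation that $|A|\leq|\sym_\alpha(Y)|\ll\alpha^{-6}$ in that range before it becomes rigorous, but the Cauchy--Schwarz route is clean.
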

\begin{proof}
  By Proposition~\ref{prop:300}, there is a subset $E\subseteq A\times A$ such that $|E|\geq \frac{\alpha^2}2|A|^2$ and 
\[
A^{-1}\pp E A\subseteq\sym_{\alpha^2/2}(Y).
\]
By Theorem~\ref{thm:7}, if $\F=\C$,
\[
|A^{-1}\pp E A|\leq|\sym_{\alpha^2/2}(Y)|\ll \alpha^{-6}|Y|\leq \alpha^{-6}|A|,
\]
while if $\F=\F_p$ and $|Y|\leq\frac 12\alpha p$,
\[
|A^{-1}\pp E A|\leq|\sym_{\alpha^2/2}(Y)|\ll \alpha^{-8}|Y|\leq \alpha^{-8}|A|.
\]
\end{proof}

% If $|A^{-1}\pp E A|\leq K|A|$ for a dense subset $E\subseteq A^{-1}\times A$, then by the multiplicative version of the \bsg{} theorem (see \cite[Lemma 34]{murphy2017group}), there is a dense subset $A_*\subseteq g_*A$ of a translate of $A$ such that $|A_*^3|\ll K^C|A_*|$.
% At this point, we may apply Theorem~\ref{thm:12} or Theorem~\ref{thm:13} to prove Theorem~\ref{thm:22} (modulo translation between algebraic and geometric terminology).
% Since our product theorems hold over $\C$ and $\F_p$, we have the following version of Theorem~\ref{thm:22}.
Now we will prove the following theorem, in the spirit of Elekes' Theorem~\ref{thm:22}.
\begin{thm}
  \label{thm:8x}
If $N$ lines are $\alpha$-rich in a $N\times N$ grid in $\C^2$,
then either
\begin{enumerate}
\item $C\alpha^C N$ lines are parallel, or
\item $C\alpha^C N$ lines are concurrent,
\end{enumerate}
where $C>0$ is a constant independent of $\alpha$ and $N$.

The same holds for $\F_p^2$, provided that $N\leq\frac 12\alpha p$.
\end{thm}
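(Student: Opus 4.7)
The plan is to follow the general strategy advertised in the appendix: translate everything into the language of symmetry sets, use approximate closure together with the upper bound for $|\sym_\alpha(Y)|$ to establish small partial doubling, apply the asymmetric \bsg{} theorem to obtain a large subset with small tripling, and finally invoke a product theorem for $\aff 1\F$ to locate a coset of an abelian subgroup. Concretely, let $A\subseteq\aff 1\F$ be the set of affine transformations whose graphs are the lines of $L$. By the geometric/algebraic dictionary of Section~\ref{sec:geom-dict-proof}, $A\subseteq\sym_\alpha(Y)$, and $|A|=|Y|=N$; moreover, ``$\gg\alpha^C N$ parallel lines'' corresponds to a coset of the translation subgroup $U$ meeting $A$ in $\gg\alpha^C N$ elements, while ``$\gg\alpha^C N$ concurrent lines'' corresponds to such a coset of a homothety subgroup. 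It therefore suffices to produce some coset $gH$ of an abelian subgroup $H\leq\aff 1\F$ with $|A\cap gH|\gg\alpha^C N$.

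Since $|A|\geq|Y|$, Proposition~\ref{prop:7xx} applies directly and gives a subset $E\subseteq A\times A$ with $|E|\geq\tfrac{\alpha^2}{2}|A|^2$ such that $|A^{-1}\pp{E}A|\ll\alpha^{-O(1)}|A|$ (with exponent $6$ over $\C$ and $8$ over $\F_p$ under the stated hypothesis $N\leq\tfrac12\alpha p$). This is exactly the partial small-doubling hypothesis for the asymmetric \bsg{} theorem. The next step is to invoke Theorem~\ref{thm:1} (or equivalently, a standard asymmetric \bsg{} argument) with $J=1$: this produces an element $g_*\in\aff 1\F$ and a subset $A_*\subseteq g_*\sym_{\alpha^2/2}(Y)$ with $|A_*|\gg\alpha^C|A|$ and $|A_*^3|\ll\alpha^{-C}|A_*|$. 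Then over $\F=\C$ I would apply the Breuillard--Green product theorem for $\aff 1\C$ (Theorem~\ref{thm:12}) to find an abelian subgroup $H$ and an element $g$ with $|A_*\cap gH|\gg\alpha^C|A_*|$, and finally transfer this back to $A$ using part (2) of Theorem~\ref{thm:1} with $S=gH$, yielding $|A\cap g'gH|\gg\alpha^C|A|=\alpha^C N$. Translating through the dictionary completes the proof in the complex case.

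The main obstacle is the $\F_p$ case, because Helfgott's product theorem (Theorem~\ref{thm:13}) has three alternatives rather than one. Alternatives~(\ref{eq:33}) and~(\ref{eq:34}) immediately give the desired coset of an abelian subgroup (a torus, or one translate of $U$ chosen by pigeonhole among $\ll K^C$ cosets). The delicate case is~(\ref{eq:35}), where $K^C|A_*|\gg|\pi(A_*)|\,p$; here I would use the fact that $A_*$ lies in a translate of $\sym_{\alpha^2/2}(Y)$ together with the hypothesis $N\leq\tfrac12\alpha p$ to bound, via Theorem~\ref{thm:7}, $|A_*|\leq|\sym_{\alpha^2/2}(Y)|\ll\alpha^{-8}|Y|\ll\alpha^{-7}p$. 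Substituting into~(\ref{eq:35}) forces $|\pi(A_*)|\ll\alpha^{-O(1)}$, so by pigeonhole some $U$-coset contains $\gg\alpha^C|A_*|$ elements of $A_*$. From this point the argument is identical to the complex case: part (2) of Theorem~\ref{thm:1} transfers the concentration from $A_*$ back to $A$, and the dictionary converts the resulting coset of $U$ or of a homothety subgroup into a large family of parallel or concurrent lines, respectively.
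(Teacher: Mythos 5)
Your proposal is correct and follows essentially the same route as the paper: translate to $A\subseteq\sym_\alpha(Y)$ with $|A|=|Y|$, use approximate closure (Proposition~\ref{prop:300}) plus the symmetry-set bound (Theorem~\ref{thm:7}, packaged in Proposition~\ref{prop:7xx}) to obtain a partial product set $\ll\alpha^{-O(1)}|A|$, run \bsg{} to extract a large subset with small tripling, apply the product theorem (Theorem~\ref{thm:12} over $\C$, the three-alternative Theorem~\ref{thm:13} over $\F_p$, handling alternative~(\ref{eq:35}) via the bound $|\sym|\ll\alpha^{-O(1)}p$ exactly as you do), and transfer the resulting concentration on a coset of an abelian subgroup back to $A$. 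The only cosmetic difference is that you invoke the full group-action machinery of Theorem~\ref{thm:1} with $J=1$ together with its part~(2) transfer mechanism, whereas the paper applies the single-shot \bsg{} statement (Theorem~\ref{thm:8}) directly and transfers through the containment $S\subseteq aA^{-1}$; since Theorem~\ref{thm:1} at $J=1$ reduces to precisely this, the two arguments coincide.
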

To prove Theorem~\ref{thm:8x}, we need the \bsg{} theorem.
The following version \cite[Lemma 34]{murphy2017group}, is essentially contained in \cite{tao2008product}.
\begin{thm}
  \label{thm:8}
If $A$ and $B$ are finite subsets of a group $G$ and $E\subseteq A\times B$ is a relation such that
\[
|E|\geq \alpha|A||B| \andd |A\pp E B|\leq K|A|^{1/2}|B|^{1/2},
\]
where $\alpha\in (0,1]$ and $K>0$, then there is an element $a$ in $A$ and a subset $S\subseteq a^{-1}A$ such that
\[
|S|\gg \left( \frac \alpha K \right)^C |A|\andd |S^3|\ll \left( \frac K\alpha \right)^C|S|,
\]
where $C$ is an absolute constant.
\end{thm}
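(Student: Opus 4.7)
The plan is to follow the standard non-commutative Balog--Szemer\'edi--Gowers strategy, with the asymmetric bookkeeping handled by double-counting collisions in the partial product $A\pp{E}B$. First, I would apply Cauchy--Schwarz to the map $E\to A\pp{E}B$ sending $(a,b)\mapsto ab$: the number of ordered quadruples $(a_1,b_1,a_2,b_2)\in E^2$ with $a_1b_1=a_2b_2$ is at least $|E|^2/|A\pp{E}B|\geq (\alpha^2/K)|A|^{3/2}|B|^{3/2}$. Each such quadruple records a multiplicative identity $a_1^{-1}a_2 = b_1 b_2^{-1}$, so we are witnessing many ``$2$-paths'' in the bipartite graph associated to $E$ that connect pairs $(a_1,a_2)\in A\times A$.

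Next, I would apply the graph-theoretic Balog--Szemer\'edi--Gowers lemma (in the form given, e.g., in \cite{tao2010additive}) to this bipartite graph. This produces subsets $A'\subseteq A$ and $B'\subseteq B$ of densities $\gg(\alpha/K)^{O(1)}$ in $A$ and $B$, such that a positive proportion of pairs $(a_1,a_2)\in A'\times A'$ are joined by $\gg (\alpha/K)^{O(1)}|B|$ paths of length two within the induced bipartite graph on $A'\times B'$. By pigeonholing on a popular vertex $a\in A'$, take $S := a^{-1}A'$; then $|S|\gg (\alpha/K)^{O(1)}|A|$, and every $s=a^{-1}a'\in S$ admits $\gg(\alpha/K)^{O(1)}|B|$ representations of the form $b_1b_2^{-1}$ with $b_1,b_2\in B'$.

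The final step is to convert the many-representations property of $S$ into a small-tripling bound. Given three elements $s_1,s_2,s_3\in S$, each can be written in $\gg(\alpha/K)^{O(1)}|B|$ ways as $b_{1}b_{2}^{-1}$ with $b_{i}\in B'$, so $s_1 s_2 s_3$ admits $\gg(\alpha/K)^{O(C)}|B|^3$ representations as words in $B'\cup (B')^{-1}$ of bounded length. Comparing with the total number of such words, which is at most $(K|A|^{1/2}|B|^{1/2})^{O(1)}$ via Ruzsa's non-commutative triangle inequality \cite{tao2008product} applied to the small set $A\pp{E}B$, forces $|S^3|\ll (K/\alpha)^C|S|$. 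This is where I expect the main technical obstacle to lie: in the non-abelian setting, one cannot use Pl\"unnecke--Ruzsa directly, so the reduction from ``many representations'' to ``small tripling'' must go through Tao's non-commutative triangle inequality, carefully tracking how the asymmetry between $|A|$ and $|B|$ propagates through the product estimates. Once $S$ with the stated properties is produced, conclusion (2) of Theorem~\ref{thm:1} follows by relating popular translates $A\cap gS$ to the bipartite structure of $E$.
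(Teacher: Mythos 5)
The paper does not actually prove Theorem~\ref{thm:8}; it states it and cites it as Lemma 34 of \cite{murphy2017group}, which is in turn ``essentially contained in \cite{tao2008product}.'' Your high-level plan (Cauchy--Schwarz for the energy bound, graph Balog--Szemer\'edi--Gowers, then non-commutative Ruzsa calculus to extract small tripling) is indeed the correct outline of Tao's argument, and your first step is right: collisions are $\geq |E|^2/|A\pp{E}B|\geq (\alpha^2/K)|A|^{3/2}|B|^{3/2}$.

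The difficulty is in the middle. First, you omit the dyadic pigeonholing that restricts $E$ to the subrelation $E'$ of pairs $(a,b)$ whose product $ab$ is \emph{popular}; without this restriction, the graph-BSG step gives no usable algebraic information. Second, and more seriously, you conflate two distinct structures. A ``path of length two'' $a-b-a'$ (a common neighbor $b\in B'$) yields the identity $a\,(a')^{-1}=(ab)(a'b)^{-1}$, where $ab$ and $a'b$ live in $A\pp{E}B$ (or, after pigeonholing, in the popular-product set $P$) --- \emph{not} the identity $a^{-1}a'=b_1b_2^{-1}$ with $b_1,b_2\in B'$. The latter identity belongs to \emph{collision} quadruples $(a,b_1),(a',b_2)\in E$ with $ab_1=a'b_2$, which are not paths in the bipartite graph and are not what the common-neighbor version of BSG delivers. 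In a non-abelian group this is a genuine gap: $aa'^{-1}$ and $a^{-1}a'$ are different elements, and the representing pair lives in $P$ rather than $B'$. The correct continuation counts representations $aa'^{-1}=p_1p_2^{-1}$ with $p_1,p_2\in P$, giving $|A'(A')^{-1}|\cdot N\leq |P|^2$ with $N\gg(\alpha/K)^{O(1)}|B|$ and $|P|\leq K|A|^{1/2}|B|^{1/2}$, and then uses the forced near-equality $|A|\approx_{(K/\alpha)^{O(1)}}|B|$ (which follows from $\alpha\max(|A|,|B|)\leq|A\pp{E}B|\leq K|A|^{1/2}|B|^{1/2}$) to pass to $|S^3|$. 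Third, your final step --- bounding the number of length-$6$ words in $B'\cup(B')^{-1}$ by a power of $|A\pp{E}B|$ ``via Ruzsa's triangle inequality'' --- does not close as stated: the size of $|A\pp{E}B|$ alone places no bound on $|B'(B')^{-1}B'(B')^{-1}B'(B')^{-1}|$, and the total number of sextuples $|B'|^6$ only gives the trivial bound $|S^3|\ll(K/\alpha)^{O(1)}|B|^3$. The Ruzsa/approximate-group bookkeeping has to be run on $P$ and $A'$, not on $B'$.
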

\begin{proof}
Let $\F$ denote $\C$ or $\F_p$, let $Y$ be a finite subset of $\F$, and suppose that $L$ is a set of lines that are $\alpha$-rich in $Y\times Y$.
Translating to algebraic language, we have $A\subseteq\sym_\alpha(Y)$, where $A=A_L$ is the set of affine transformations corresponding to the elements of $L$.

By Proposition~\ref{prop:300}, there is a subset $E\subseteq A^{-1}\times A$ such that
\[
|E|\geq\frac{\alpha^2}2|A|^2\andd |A^{-1}\pp E A|\ll \alpha^{-8}|A|.
\]
By Theorem~\ref{thm:8}, there is an element $a$ of $A$ and a subset $S$ of $\aff 1\F$ such that $S\subseteq aA^{-1}$,
\[
|S|\gg \alpha^{-C}|A|,\andd |S^3|\ll \alpha^{-C}|S|.
\]

Now, as in the proof of Theorem~\ref{thm:9x}, we may apply Theorem~\ref{thm:12} or Theorem~\ref{thm:13}, depending on $\F=\C$ or $\F=\F_p$, to deduce that there is an abelian subgroup $H$ of $\aff 1\F$ such that $|S\cap gH|\gg\alpha^{-C}|S|$ for some $g$ in $\aff 1\F$.
Since $S\subseteq aA^{-1}$ and $|S|\gg\alpha^{-C}|A|$, we have $|aA^{-1}\cap gH|\gg\alpha^{-C}|A|$, hence $|A\cap g^{-1}aH'|\gg\alpha^{-C}|A|$ for some subgroup $H'$ conjugate to $H$.

To complete the proof, we translate to back geometric language, as in the proof of Theorem~\ref{thm:4}.
\end{proof}
It is a credit to Elekes' ingenuity that he proved Theorem~\ref{thm:22} without the \bsg{} theorem.

The assumption $|A|\geq |Y|$ is necessary to compare $|A^{-1}\pp E A|$ to $|A|$; if $|A| < |Y|$, then one may iterate Proposition~\ref{prop:300} until we reach an iterated partial product set with small doubling.
This strategy was used by Borenstein and Croot \cite{borenstein2010lines} to prove an analog of Elekes' results for small sets of lines (affine transformations).
The analogy between Borenstein and Croot's work \cite{borenstein2010lines} and the asymmetric \bsg{} theorem \cite[Theorem 2.35]{tao2010additive}, as observed by Helfgott \cite{borenstein2010lines}, motivated Theorem~\ref{thm:1}.

To prove Theorem~\ref{thm:1}, we use a variation of Proposition~\ref{prop:300}.
We use the notation $\lessapprox$ to hide logarithmic factors of $\alpha^{-1}$ and $|A|$, and for finite subsets $A$ and $B$ of a group and $E\subseteq A\times B$ we define
\[
 A\pp{E}B=\{ab\colon (a,b)\in E\}\andd r_E(x)=|\{(a,b)\in E\colon ab=x\}|.
\]
\begin{prop}[Uniform approximate closure]
  \label{prop:1}
If $A$ is a non-empty subset of $\sym_\alpha(Y)$ then there is a relation $E\subseteq A^{-1}\times A$ such that
\begin{align}
  |E| &\gtrapprox \alpha^2 |A|^2,\label{eq:54x}\\
r_E(x) &\geq\frac{|E|}{2|A^{-1}\pp{E}A|}\qquad\mbox{for all $x$ in $A^{-1}\pp{E}A$},\label{eq:55x}\\
& A^{-1}\pp{E} A \subseteq \sym_{\frac{\alpha^2}2}(Y)\label{eq:56x}.
\end{align}
% Further, $E$ is symmetric, so that 
% \begin{equation}
%   \label{eq:59x}
%   (A^{-1}\pp E A)^{-1}=A^{-1}\pp E A.
% \end{equation}
\end{prop}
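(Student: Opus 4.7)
The plan is to derive Proposition~\ref{prop:1} from Proposition~\ref{prop:300} by a standard dyadic pigeonholing that refines $E$ to the dominant popularity level; the price is the logarithmic factors absorbed into $\gtrapprox$.

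First I would apply Proposition~\ref{prop:300} with $S=A$ to obtain a relation $E_0\subseteq A^{-1}\times A$ satisfying
\[
|E_0|\geq \tfrac{\alpha^2}{2}|A|^2 \andd A^{-1}\pp{E_0}A\subseteq \sym_{\alpha^2/2}(Y).
\]
Set $P:=A^{-1}\pp{E_0}A$. Note $r_{E_0}(x)\leq |A|$, since for each $a\in A$ the pair $(a^{-1},b)\in E_0$ with $a^{-1}b=x$ is determined by $b=ax$. In particular $1\leq r_{E_0}(x)\leq |A|$ on $P$.

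Next I would dyadically decompose $P$: for $k=0,1,\ldots,\lceil\log_2|A|\rceil$, set
\[
P_k:=\{x\in P\colon 2^{k-1}< r_{E_0}(x)\leq 2^k\},\qquad E_k:=\{(a^{-1},b)\in E_0\colon a^{-1}b\in P_k\}.
\]
Since $E_0=\bigsqcup_k E_k$ and there are $O(\log|A|)$ levels, pigeonhole produces some $k$ with
\[
|E_k|\gg \frac{|E_0|}{\log|A|}\gtrapprox \alpha^2|A|^2.
\]
Take $E:=E_k$. By construction $A^{-1}\pp{E}A=P_k\subseteq P\subseteq \sym_{\alpha^2/2}(Y)$, which is (\ref{eq:56x}), and (\ref{eq:54x}) follows from the pigeonhole above.

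For (\ref{eq:55x}), observe that $|E|=\sum_{x\in P_k}r_{E_0}(x)\leq 2^k|P_k|$, so $|E|/(2|P_k|)\leq 2^{k-1}$. On the other hand, every $x\in P_k$ satisfies $r_E(x)=r_{E_0}(x)>2^{k-1}$ by definition of $P_k$, hence
\[
r_E(x)\geq \frac{|E|}{2|A^{-1}\pp{E}A|}\qquad\text{for all }x\in A^{-1}\pp{E}A,
\]
as required. There is no real obstacle here beyond bookkeeping; the dyadic reduction is the only idea beyond Proposition~\ref{prop:300}, and it costs only the logarithmic slack already absorbed into the $\gtrapprox$ notation.
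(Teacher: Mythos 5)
Your argument is correct and is exactly the route the paper takes: the paper's proof is stated only as ``combine Proposition~\ref{prop:300} with a dyadic pigeonholing argument,'' following \cite[Lemma 2.34]{tao2010additive}, and that is precisely what you have carried out, with the logarithmic loss absorbed into $\gtrapprox$. The small points you had to check --- that $r_{E_0}\leq|A|$, that $A^{-1}\pp{E_k}A=P_k$, and that $r_{E_k}(x)=r_{E_0}(x)$ on $P_k$ --- are all handled correctly.
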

The proof of Proposition~\ref{prop:1} is essentially the same as the proof of \cite[Lemma 2.34]{tao2010additive}: combine Proposition~\ref{prop:300} with a dyadic pigeonholing argument.
(While \cite[Lemma 2.34]{tao2010additive} is stated only for abelian groups, the proof works verbatim for non-abelian groups.)
%give a proof in the next section.

Proposition~\ref{prop:1} implies that if a set $S$ is dense in the product set $A^{-1}\pp E A$, then some translate of $S$ is dense in $A$.
Thus, if we find a ``structured'' subset of the product set $A^{-1}\pp E A$, we may bring that structure back to the original set $A$.
More precisely, if $A$ is a finite subset of $G$ and $E\subseteq A^{-1}\times A$ satisfies (\ref{eq:54x}) and (\ref{eq:55x}), then for any subset $S$ of $G$, there is an element $a$ in $A$ such that
\begin{equation}
  \label{eq:7}
  \frac{|A\cap aS|}{|A|}\gtrapprox\alpha^2\frac{|(A^{-1}\pp E A)\cap S|}{|A^{-1}\pp E A|}.
\end{equation}
Now we sketch the proof of Theorem~\ref{thm:1}.
\begin{proof}[Proof of Theorem~\ref{thm:1}]
Let $A_0=A$ and $\alpha_0=\alpha$.
By Proposition~\ref{prop:1}, there is a subset $E_0\subseteq A_0^{-1}\times A_0$ such that \eqref{eq:54x}, \eqref{eq:55x}, and \eqref{eq:56x} hold.
Define $A_1:=A_0^{-1}\pp{E_0}A_0$.
By \eqref{eq:7}, for any subset $S$ of $\aff 1\F$, there is an element $a_0$ in $A_0$ such that
\[
\frac{|A_0\cap a_0S|}{|A_0|}\gtrapprox \alpha_0^2\frac{|A_1\cap S|}{|A_1|}.
\]

Since $A_1\subseteq\sym_{\alpha_1}(Y)$, where $\alpha_1=\alpha_0^2/2$, we may iterate this process to find a sequence of numbers
\[
\alpha_0>\alpha_1>\cdots>\alpha_J>0
\]
such that $\alpha_{j+1}=\alpha_j^2/2$, and a sequence of sets $A_j\subseteq\aff 1\F$ such that $A_j\subseteq\sym_{\alpha_j}(Y)$,
and for any set $S$ in $\aff 1\F$, there is an element $a_j$ in $A_j$ such that
\begin{equation}
  \label{eq:8}
  \frac{|A_j\cap a_jS|}{|A_j|}\gtrapprox \alpha_j^2\frac{|A_j\cap S|}{|A_j|}.
\end{equation}

Now for the key step: setting $K^J=|A_J|/|A_0|$, we have
\[
K^J=\frac{|A_J|}{|A_0|}=\prod_{j=0}^{J-1}\frac{|A_{j+1}|}{|A_j|},
\]
so by the pigeonhole principle, there is an index $0\leq j\leq J-1$ such that $|A_{j+1}|\leq K|A_j|$.
That is,
\[
|A_j^{-1}\pp{E_j}A_j|\leq K|A_j|.
\]
Since $|E_j|\gtrapprox \alpha_j^2|A_j|$, we can now apply the \bsg{} theorem, as in the proof of Theorem~\ref{thm:8x}, to find a subset $S$ of $a_jA_j^{-1}$ such that
\[
|S\cap a_jA_j^{-1}|\gg \left( \frac{\alpha_j}{K} \right)^C|A_j|\andd |S^3|\ll \left( \frac {K}{\alpha_j} \right)^C|S|.
\]

If we wished to prove Theorem~\ref{thm:9x} directly, we would now apply a product theorem to find an abelian subgroup of $\aff 1\F$ with large overlap with $S$.

Instead, we simply assume that there is some set $H$ such that $|S\cap H|\gg (\alpha_j/K)^C|S|$.
Since $S\subseteq a_jA_j^{_1}$, we have $|a_j^{-1}H\cap A_j^{-1}|\gg (\alpha_j/K)^C|A_j|$.
Iterating \eqref{eq:8} yields an element $g$ in $G$ such that
\[
  \frac{|A_0\cap ga_j^{-1}H|}{|A_0|}\gtrapprox(\alpha_0\cdots\alpha_j)^2\frac{|A_j\cap a_j^{-1}H|}{|A_j|}.
\]
Since $\alpha_j=2(\alpha/2)^{2^j}\geq 2(\alpha/2)^{2^J}$, this completes the proof of Theorem~\ref{thm:1}.
\end{proof}

\section{Product theorems for $\aff 1\F$}
\label{sec:prod-theor-affine}

Let $U$ be a subgroup of a group $G$, and let $\pi\colon G\to G/U$ be the quotient map.
For a subset $A$ of $G$, let $A/U$ denote the image of $A$ under $\pi$; that is, $A/U$ is the set of left cosets of $U$ of the form $aU$ with $a$ in $A$.

Recall that if $G=\aff 1\F$, then a maximal torus $T$ is a subgroup conjugate to the diagonal subgroup, and the unipotent subgroup $U$ consists of upper triangular matrices with 1's on the diagonal.
Every abelian subgroup of $\aff 1\F$ is either contained in the unipotent subgroup $U$ or a maximal torus.

The following is a specialization of \cite[Theorem 1.4']{breuillard2011approximate-b} to $\aff 1\C$. 
\begin{thm}[Product theorem for $\aff 1\C$]
\label{thm:17}
If $A$ is a subset of $\aff 1\C$ such that $|A^3|\leq K|A|$, then either $\geq |A|/3$ elements of $A$ are contained in a torus, or
\[
K^{10}|A|\gg |A/U|^{1/2}|A|,
\]
hence there is an element $g$ in $G$ such that $|A\cap gU|\gg K^{-20}|A|$.
\end{thm}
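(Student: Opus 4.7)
The strategy follows the orbit-stabilizer/pivoting approach combined with a sum-product estimate in $\C$. The key feature of $G=\aff 1\C$ that makes both steps work is its semidirect product structure $G=U\rtimes T$ with $U\cong(\C,+)$ and $T\cong(\C^*,\times)$: products in $G$ intertwine the additive and multiplicative structures of $\C$ in exactly the way needed to invoke sum-product.

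\emph{Setup.} By Plünnecke-Ruzsa we may assume $|A^n|\leq K^{O(n)}|A|$ for every fixed $n$, and after left-translating we may assume $1\in A$. Let $R:=\pi(A)\subseteq\C^*$. Since $\pi$ is a group homomorphism, $\pi(A^3)=R^3$ and hence $|R^3|\leq|A^3|\leq K|A|$; moreover, each fiber $A_r:=A\cap\pi^{-1}(r)$ lies in a single coset of $U$.

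\emph{First dichotomy.} If $|A\cap U|\geq|A|/2$, then the single coset $U$ contains at least half of $A$, forcing $|A/U|\leq 2$, and the second alternative of the theorem holds trivially. Otherwise most elements of $A$ are non-unipotent, and each non-unipotent $g=(r,t)$ has a unique fixed point $x_g=t/(1-r)\in\C$, belonging to the unique maximal torus $\stab(x_g)$. If some fixed point $x$ is shared by at least $|A|/3$ elements of $A$, those elements lie in the common torus $\stab(x)$ and we are in the first alternative. Otherwise, the fixed-point set $\{x_g : g\in A\setminus U\}$ is ``spread'' in $\C$ in the sense that no point has multiplicity $\geq|A|/3$.

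\emph{Sum-product step.} Suppose for contradiction that $|R|\gg K^{20}$. Dyadically refine $A$ so that all non-empty fibers $A_r$ have comparable size, and fix representatives $h_r\in A_r$, writing $A_r=h_r\cdot T_r$ for some $T_r\subseteq U\cong\C$. The product formula $(r_a,t_a)(r_b,t_b)=(r_ar_b,\,r_at_b+t_a)$ shows that the fiber of $A^2$ over $r_ar_b$ contains a translate of $r_a T_{r_b}+T_{r_a}$, which is a genuine sum-product configuration: $R$ supplies multiplicative structure while the $T_r$ supply additive structure. Using the orbit-stabilizer principle for sets (\cite[Lemma~4.1]{helfgott2015growth}) to control how these pieces assemble, and combining with $|A^2|\leq K|A|$, one extracts a set $X\subseteq\C$ with $|X+X|+|X\cdot X|\ll K^{O(1)}|X|$. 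The spread hypothesis on fixed points prevents the sums and products from collapsing onto a small subset. Applying the sum-product theorem of Roche-Newton, Rudnev, and Shkredov \cite{roche-newton2016sum-product} forces $|X|\ll K^{O(1)}$, and unraveling the orbit-stabilizer chain contradicts the assumption $|R|\gg K^{20}$.

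\emph{Conclusion and main obstacle.} Once $|A/U|\ll K^{20}$ is established, $K^{10}|A|\gg|A/U|^{1/2}|A|$ is automatic, and pigeonholing the $U$-cosets intersecting $A$ produces some $g$ with $|A\cap gU|\gg K^{-20}|A|$. The main obstacle is the sum-product step: one must simultaneously bound $|X+X|$ and $|X\cdot X|$ by $K^{O(1)}|X|$, which requires carefully choosing representatives $h_r$ and the dyadic refinement so that the ambient tripling hypothesis on $A$ controls both the additive structure inside each fiber and the multiplicative structure across fibers, and so that the spread hypothesis on fixed points translates into genuine growth for at least one of the two operations. The exponent $20$ appearing in the theorem is determined by the losses in this refinement chain.
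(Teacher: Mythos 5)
Your overall strategy—orbit-stabilizer plus a sum-product estimate over $\C$, exploiting the semidirect product $G = U \rtimes T$—is the right spirit, and the paper's proof (via Lemma~\ref{lem:6} and Proposition~\ref{prop:12}) does take that route. However, your write-up has three concrete problems.

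\textbf{The first dichotomy is wrong as stated.} From $|A \cap U| \geq |A|/2$ you conclude ``forcing $|A/U| \leq 2$.'' This does not follow: the other half of $A$ can be spread over arbitrarily many cosets of $U$. You can rescue something like this from the tripling hypothesis (since $U$ is normal, $|A^2| \geq |A\cap U|\cdot|A/U|$, so $|A/U|\leq 2K$ if $|A\cap U|\geq |A|/2$), but that is a different argument than the one you wrote, and the paper does not even need this case split.

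\textbf{You cite the wrong sum-product ingredient for $\C$.} The theorem of Roche-Newton, Rudnev, and Shkredov \cite{roche-newton2016sum-product} is a result over $\F_p$; it underlies the $\F_p$ version (Proposition~\ref{prop:13}). Over $\C$ the paper uses the Szemer\'edi--Trotter-based three-variable estimate $|A + BC| \gg \sqrt{|A||B||C|}$ (Proposition~\ref{prop:12}), which has clean exponent $1/2$. Your plan to extract a single set $X$ with $|X+X| + |X\cdot X| \ll K^{O(1)}|X|$ and invoke a \emph{symmetric} sum-product theorem is strictly weaker: the best symmetric exponent over $\C$ is nowhere near $1$, so you would not reach $K^{10}|A| \gg |A/U|^{1/2}|A|$ this way. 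The asymmetric form $|B - BC|\gg |B||C|^{1/2}$ is exactly what the exponent $20$ comes from.

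\textbf{The extraction step is where the work actually is, and you skip it.} You assert that a dyadic refinement of the fibers $A_r$ plus orbit-stabilizer produces the set $X$, but this is precisely the content of the paper's Lemma~\ref{lem:6}. The concrete mechanism is: pick $x \in A$ non-central with $|[A,x]|>1$; then $S := x^A x^{-1} \subseteq U$ and $T := (a_0^{-1}A)\cap C(x)$ (a torus slice) satisfy $|S||T|\geq |A|$ and $|T|\leq |A/U|$ by orbit-stabilizer for sets; the conjugation action of $T$ on $S$ lands inside $A^2 A^{-1} A^2 A^{-3}$, and Ruzsa's triangle inequality plus another orbit-stabilizer application give $K^{10}|A| \gg |A/U|\cdot|B-BC|$ where $B,C$ are the translation parts of $S$ and the slopes of $T$. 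Then Proposition~\ref{prop:12} closes the argument. Your ``spread hypothesis on fixed points'' is morally the statement that $x$ can be chosen with $|[A,x]|>1$, but you never convert it into a sum-product configuration of controlled size. Until that conversion is written down, the proof has a genuine gap.
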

Theorem~\ref{thm:17} says that if $A$ is not contained in a torus, then either $A$ is covered by a small number of cosets of $U$ (so that $A/U$ is small), or $A$ grows under multiplication: $|A^3|\gg |A/U|^{1/20}|A|$. 

The next theorem is a slight quantitative improvement of the product theorem for $\aff 1{\F_p}$ that appears in \cite{helfgott2015growth}.
\begin{thm}[Product theorem for $\aff 1{\F_p}$]
\label{thm:18}
If $A$ is a subset of $\aff 1{\F_p}$ such that $|A^3|\leq K|A|$, then either $\geq |A|/3$ elements of $A$ are contained in a torus, or
\[
K^{10}|A|\gg |A/U|^{1/2}|A|,
\]
or
\[
K^{10}|A|\gg |A/U|p.
\]
\end{thm}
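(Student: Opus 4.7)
The plan is to mirror the proof of Theorem~\ref{thm:17} over $\F_p$, with the third alternative accounting for the finite-field saturation phenomenon. I identify $G=\aff 1{\F_p}$ with the semidirect product $\F_p^*\ltimes\F_p$, representing elements as pairs $(a,b)$ that multiply by $(a_1,b_1)(a_2,b_2)=(a_1a_2,\,a_1b_2+b_1)$. The translation subgroup $U=\{(1,b):b\in\F_p\}$ is normal with abelian quotient $\pi\colon G\to G/U\cong\F_p^*$, and the maximal tori are the point stabilizers $T_x=\stab(x)$ for $x\in\F_p$. If at least $|A|/3$ elements of $A$ lie in some $T_x$ I am done, so I may assume $|A\cap T_x|<|A|/3$ for every $x\in\F_p$ and need to derive either the bound $|A/U|\ll K^{20}$ (the second alternative) or $|A|\gg|A/U|p/K^{10}$ (the third).

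A direct computation yields the conjugation formula $(a_1,b_1)(a_2,b_2)(a_1,b_1)^{-1}=(a_2,\,b_1(1-a_2)+a_1b_2)$, showing that conjugating a non-translation element $(a_2,b_2)$ by elements of $A$ stirs the $b$-coordinate within a fixed $U$-coset via an affine combination of the $a$- and $b$-coordinates of $A$. Pigeonhole next supplies a coset $g_0U$ with $|A\cap g_0U|\gg|A|/|A/U|$; let $B\subseteq\F_p$ be the corresponding $b$-coordinate set, so $|B|\gg|A|/|A/U|$. I then apply Helfgott's orbit-stabilizer lemma for sets \cite[Lemma~4.1]{helfgott2015growth} to $A^{-1}A$ acting on a carefully chosen $x\in\F_p$: combined with the hypothesis (transferred from $A$ to $A^{-1}A$ via Pl\"unnecke--Ruzsa) that no torus captures a third of the relevant set, this forces the orbit $A^{-1}Ax\subseteq\F_p$ to be large. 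Cross-referencing with the conjugation formula exhibits, inside a single $U$-coset of $A^3$, a large additive-multiplicative combination of $\pi(A)$ and $B$.

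I then invoke a sum-product estimate in $\F_p$ \cite{roche-newton2016sum-product,yazici2015growth}: for subsets $X,Y\subseteq\F_p$ of comparable size, either $|X\cdot Y+Y|\gg\min(|X|,|Y|)^{1+c}$ for an absolute $c>0$, or the size saturates at $\Omega(p)$. In the non-saturating case, the growth of the distinguished $U$-coset of $A^3$ exceeds the supply $|A^3|/|A/U|\leq K|A|/|A/U|$ allowed by the hypothesis unless $|A/U|$ is polynomially small in $K$; after Pl\"unnecke--Ruzsa and Ruzsa-triangle cleanup this yields the second alternative $|A/U|\ll K^{20}$. In the saturating case, the pivot varies sufficiently across $\pi(A)$ that essentially every fiber of $\pi$ within $A^3$ is forced to be nearly all of $\F_p$, so $|A^3|\gg|A/U|\cdot p$, and combined with $|A^3|\leq K|A|$ this gives the third alternative.

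The main obstacle is exponent bookkeeping rather than any new idea. Matching the $K^{10}$ exponent requires combining the polynomial losses from Pl\"unnecke--Ruzsa, Ruzsa covering, Helfgott's orbit-stabilizer, and the sum-product exponent $c$ via a carefully ordered sequence of Ruzsa-triangle applications. A secondary subtlety is transferring the torus-avoidance hypothesis from $A$ to iterated product sets like $A^{-1}A$: this relies on the standard observation that $|A\cap T|<|A|/3$ for all tori forces a similar bound (with a polynomial-in-$K$ constant) on $|A^{-1}A\cap T|$, which is where the hypothesis must be used most delicately to ensure the orbit-stabilizer step actually produces a large orbit in $\F_p$ rather than a large stabilizer.
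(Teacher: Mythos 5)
Your high-level dichotomy is right — deduce a growth inequality on a difference-product set in a $U$-fibre and then split on whether the $\F_p$ sum-product estimate saturates at $p$ or not — but two of the intermediate steps in your sketch would not close as stated, and they are precisely the places where the paper does something different and more precise.

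First, the construction of the sets fed into the sum-product estimate. You pigeonhole to find a dense $U$-coset $g_0U$ and take its $b$-coordinates as $B$, and then hope that the conjugation formula deposits a combination like $B-BC$ inside a $U$-coset of $A^3$. But conjugating elements of $g_0U$ by $(a,c)\in A$ gives $(a_0,\,c(1-a_0)+ab)$, so the $a$- and $b$-coordinates of $A$ enter in a coupled way and you do not get a clean set $B-BC$ from a generic dense coset. The paper instead chooses a \emph{pivot} $x\in A$ with $[A,x]\neq\{e\}$ (guaranteed by the torus-avoidance hypothesis), sets $S=x^Ax^{-1}\subseteq U$ (commutators lying in $U$ automatically) and $T=(a_0^{-1}A)\cap C(x)$, and records the two inequalities $|S||T|\geq|A|$ (from the orbit-stabilizer lemma applied to the \emph{conjugation action on $G$}, not the action on $\F_p$) and $|T|\leq|A/U|$. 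It is this pair of inequalities, rather than a dense-coset pigeonhole, that ultimately produces the clean exponent $|A/U|^{1/2}$ in the second alternative.

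Second, the orbit-stabilizer step as you describe it does not follow from torus-avoidance. Lemma~\ref{lem:3} lower-bounds $|A'(x)|$ by $|A'|/|(a_0^{-1}A')\cap\stab(x)|$; the denominator is the intersection of $A'$ with a \emph{coset} $a_0\stab(x)$, not with the subgroup $\stab(x)$ itself, and the hypothesis ``$|A\cap T|<|A|/3$ for every torus $T$'' gives no control over intersections with torus cosets, even after passing to $A^{-1}A$. The paper sidesteps this entirely by never trying to show an orbit in $\F_p$ is large; it only needs $|T|\leq|A/U|$, which follows because the projection $\pi\colon G\to G/U$ is injective on any torus.

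Third, the sum-product input. You invoke a bound of the shape $|X\cdot Y+Y|\gg\min(|X|,|Y|)^{1+c}$ for comparable-size sets with an unspecified $c>0$; with such an estimate you cannot reproduce the stated exponent $K^{10}$, only $K^{O(1/c)}$. The paper uses the asymmetric Stevens--de Zeeuw form $|A+BC|\gg\min(\sqrt{|A||B||C|},\,p)$ (Proposition~\ref{prop:13}), specialized to $|B-BC|\gg\min(|B||C|^{1/2},p)$; combined with $|S||T|\geq|A|$ and $|T|\leq|A/U|$ this yields $K^{10}|A|\gg|A/U|^{1/2}|A|$ in the non-saturating branch via the chain $|A/U|\,|B|\,|C|^{1/2}\geq|A/U|^{1/2}|T|^{1/2}|S||T|^{1/2}\geq|A/U|^{1/2}|A|$. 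You would need this precise asymmetric form, not a generic $(1+c)$-type growth estimate, to get the stated constant. The saturating branch of your argument is fine in spirit and matches the paper's.

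In short: keep the saturation dichotomy, but replace the dense-coset pigeonhole by the commutator/pivot construction, apply orbit-stabilizer to conjugation on $G$ rather than the action on $\F_p$, and use the asymmetric $\min(\sqrt{|A||B||C|},p)$ sum-product bound so that the exponent bookkeeping actually lands on $K^{10}$.
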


\subsection{Technical lemma}

The following lemma contains the common elements of the proofs of Theorem~\ref{thm:17} and Theorem~\ref{thm:18}.
\begin{lem}
\label{lem:6}
  If $\F$ is a field and $A$ is a finite subset of $\aff 1\F$, then either more than one third of the elements of $A$ are contained in an abelian subgroup, or there exists an $x$ in $A$ such that $|x^Ax^{-1}|=|[A,x]| > 1$.

Further there is an $a_0$ in $A$ such that if $S:=x^Ax^{-1}\subseteq U$ and $T:=(a_0^{-1}A)\cap C(x)$ then
\[
|S||T|\geq |A|\andd |T|\leq |A/U|.
\]
In addition, if $|A^3|\leq K|A|$, then
\[
K^{10}|A| \gg |A/U|\cdot |B-BC|,
\]
where $|B|=|S|$ and $|C|=|T|$.
\end{lem}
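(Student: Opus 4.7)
The plan is to prove the three assertions of the lemma in sequence: the dichotomy, the orbit-stabilizer bound, and the growth inequality.

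\emph{Dichotomy.} I would use the classification of maximal abelian subgroups of $\aff 1\F$: these are the translation subgroup $U$ and the maximal tori $C(y)$ for $y \notin U$. If $|A \cap U| > |A|/3$, we are in the first case. Otherwise $|A \setminus U| > 2|A|/3$, and if every $x \in A \setminus U$ satisfied $x^A = \{x\}$, then $A$ would be contained in the torus $C(x)$ for any such $x$, again the first case. Hence one can choose $x \in A \setminus U$ with $|x^A x^{-1}| > 1$. The containment $S = [A,x] \subseteq U$ is automatic, since $G/U$ is abelian and commutators vanish in the quotient.

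\emph{Orbit-stabilizer.} The map $a \mapsto axa^{-1}$ from $A$ onto $x^A$ has fibers that are intersections of $A$ with cosets of $C(x)$ (two elements $a_1,a_2 \in A$ produce the same conjugate iff $a_1 \in a_2 C(x)$). Pigeonholing over fibers yields $a_0 \in A$ with $|A \cap a_0 C(x)| \geq |A|/|x^A| = |A|/|S|$. Setting $T = (a_0^{-1} A) \cap C(x)$ gives $|T| \geq |A|/|S|$, i.e. $|S||T| \geq |A|$. Since $x \notin U$, $C(x)$ is a maximal torus and $C(x) \cap U = \{e\}$, so $\pi$ is injective on $C(x)$; consequently $|T| = |\pi(T)| \leq |\pi(a_0^{-1}A)| = |A/U|$, using that left multiplication permutes cosets of the normal subgroup $U$.

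\emph{Growth inequality.} After conjugating so that $C(x)$ is the diagonal torus, $S = \{(1,b):b \in B\}$ and $T = \{(c,0):c \in C\}$. A direct product computation gives $s \cdot t \cdot s'^{-1} = (c,b - cb')$, so the set $STS^{-1}$ has translation coordinates running over $\bigcup_{c \in C}(B - cB) = B - BC$. To bring in the factor $|A/U|$, I would take a transversal $A_0 \subseteq A$ for the cosets of $U$ in $A$ (with $|A_0| = |A/U|$) and form $W := A_0 \cdot S \cdot T \cdot S^{-1}$. Since $S \subseteq A^2 A^{-1}A^{-1}$ (because $s = axa^{-1}x^{-1}$) and $T \subseteq A^{-1}A$, $W$ is contained in a word of length $11$ in $A$ and $A^{-1}$, so Plünnecke--Ruzsa under $|A^3| \leq K|A|$ gives $|W| \leq K^{O(1)}|A|$. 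For the matching lower bound, one analyzes the map $(\alpha, c, b, b') \mapsto \alpha \cdot sts'^{-1} = (r_\alpha c,\, r_\alpha(b - cb') + t_\alpha)$: for each fixed $c \in C$, the $|A/U|$ slopes $r_\alpha c$ as $\alpha$ varies over $A_0$ are distinct, and each contributes $|B - cB|$ distinct translations. A careful pigeonhole over $c \in C$ then produces $\gg |A/U| \cdot |B - BC|$ distinct elements, which combined with the upper bound yields the stated inequality (the extra factor of $K$ in $K^{10}$ absorbs the cost of the averaging step).

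\emph{Main obstacle.} The hardest step is the lower bound $|W| \gg |A/U| \cdot |B - BC|$. The natural parametrization overcounts: for different $c_1, c_2 \in C$, the slope sets $\pi(A) c_1$ and $\pi(A) c_2$ overlap precisely when $c_1/c_2 \in \pi(A)\pi(A)^{-1}$, and naive counting only gives $|A/U| \cdot \max_c |B - cB|$ distinct elements, which via $|B - BC| \leq \sum_c |B - cB|$ loses a factor of $|C|$. Promoting this to $|A/U| \cdot |B - BC|$ requires either a further Plünnecke--Ruzsa step to exploit the small tripling of $A$, or a clever reformulation of the construction that chooses one representative $(c(\delta), \delta) \in STS^{-1}$ per $\delta \in B - BC$ while controlling the resulting collisions; this is where the bulk of the technical work lies.
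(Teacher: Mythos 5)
Your treatment of the dichotomy and the orbit--stabilizer bound matches the paper's: the paper also removes $A\cap U$ (up to the factor-of-three bookkeeping), picks $x$ via the non-commuting argument, applies the set-version of orbit--stabilizer to get $a_0$ with $|T|\geq |A|/|x^A|$, and uses injectivity of $\pi$ on $C(x)$ for $|T|\leq |A/U|$.

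The growth inequality, however, is where your proposal has a genuine gap, and you correctly identify it yourself. The construction $W = A_0\cdot S\cdot T\cdot S^{-1}$ is the wrong set to consider: the slopes $r_\alpha c$ do collide across different $c$, and no amount of careful pigeonholing over $c$ will recover the full $|A/U|\cdot|B-BC|$ without further work. The paper sidesteps this entirely by working with $S\cdot(S^T)^{-1}$ rather than $STS^{-1}$, where $S^T:=\{tst^{-1}: s\in S, t\in T\}$. Since conjugating $(1,z)$ by $(a,b)\in T$ gives $(1,az)$, the set $S\cdot(S^T)^{-1}$ consists entirely of elements $(1,\,z-az')$ with $z,z'\in B$, $a\in C$; in particular it lies \emph{inside} $U$ and its cardinality is directly $\geq |B-BC|$, with no counting over slopes at all. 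The factor $|A/U|$ is then supplied not by a transversal product but by the orbit--stabilizer lemma (Lemma~\ref{lem:3}) applied to the left-multiplication action of $G$ on $G/U$: since $S\cdot(S^T)^{-1}= x^A(x^{-1})^{TA}\subseteq A^2A^{-1}A^2A^{-3}$ and lies in $U$, one gets
\[
|A^3A^{-1}A^2A^{-3}| \;\geq\; |(A^2A^{-1}A^2A^{-3})\cap U|\,|A/U| \;\geq\; |B-BC|\,|A/U|,
\]
and the left side is $\leq K^{10}|A|$ by Ruzsa's triangle inequality. So the missing idea is the conjugated set $S^T$, which collapses the slope ambiguity into $U$ and lets the group action do the multiplication by $|A/U|$ cleanly.
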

The proof of Lemma~\ref{lem:6} requires the following version of the orbit-stabilizer theorem \emph{for sets}, rather than for groups \cite{helfgott2015growth} and Ruzsa's triangle inequality.
\begin{lem}
\label{lem:3}
Suppose $G\actson X$, $x\in X$, and $A\subseteq G$ is finite.
Then there exists $a_0$ in $A$ such that
\begin{equation}
  \label{eqx1}
  |(a_0^{-1}A)\cap\stab(x)|\geq\frac{|A|}{|A(x)|},
\end{equation}
and for all finite sets $B\subseteq G$,
\begin{equation}
  \label{eqx2}
  |BA|\geq |A\cap\stab(x)||B(x)|.
\end{equation}
\end{lem}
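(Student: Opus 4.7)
The plan is to treat both inequalities as set-level analogues of the classical orbit--stabilizer bijection, using only pigeonhole and disjointness. Both conclusions encode the intuition that $A$ decomposes as (a representative sending $x$ to each point of the orbit) $\times$ (the part of $A$ that stabilizes $x$), but since $A$ is only a set, not a group, we replace equalities by the appropriate inequalities.

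For the first inequality, I would partition $A$ according to where it sends $x$. For each $y \in A(x)$ set $A_y := \{a \in A : a(x) = y\}$; these fibres cover $A$, and since there are $|A(x)|$ of them, pigeonhole gives some $y_0 \in A(x)$ with $|A_{y_0}| \geq |A|/|A(x)|$. Fix any $a_0 \in A_{y_0}$. Then for every $a \in A_{y_0}$ the element $a_0^{-1}a$ fixes $x$, so $a_0^{-1} A_{y_0} \subseteq \stab(x)$, and hence
\[
|(a_0^{-1}A) \cap \stab(x)| \;\geq\; |a_0^{-1} A_{y_0}| \;=\; |A_{y_0}| \;\geq\; \frac{|A|}{|A(x)|},
\]
as required.

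For the second inequality, I would exhibit $|B(x)| \cdot |A \cap \stab(x)|$ distinct elements inside $BA$. Write $S := A \cap \stab(x)$, and for each $y \in B(x)$ choose one representative $b_y \in B$ with $b_y(x) = y$. I claim the sets $b_y S \subseteq BA$, indexed by $y \in B(x)$, are pairwise disjoint. Indeed, if $b_y s = b_{y'} s'$ for some $s, s' \in S$, then applying both sides to $x$ and using $s(x) = s'(x) = x$ gives $y = b_y(x) = b_{y'}(x) = y'$. Each $b_y S$ has cardinality $|S|$, so summing over $y \in B(x)$ yields $|BA| \geq |B(x)| \cdot |A \cap \stab(x)|$.

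Neither step is genuinely hard; the only subtlety is bookkeeping about left versus right multiplication, ensuring that $a_0^{-1}A$ (not $A a_0^{-1}$) is what intersects $\stab(x)$ in the first part, and symmetrically that $b_y S$ (not $S b_y$) is what lies in $BA$ in the second part. Once the two fibrations are set up correctly, both bounds reduce to disjointness and pigeonhole; no growth or structure hypothesis on $A$ or $B$ is used, which is precisely why this lemma can serve as the combinatorial core of the product theorems for $\aff 1\F$ that follow.
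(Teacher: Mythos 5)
Your proof is correct, and it is the standard argument: the paper itself does not supply a proof but cites Helfgott's orbit--stabilizer lemma for sets, whose proof is exactly this fibering-plus-pigeonhole argument for (\ref{eqx1}) and the disjoint-coset count for (\ref{eqx2}). Your attention to left-versus-right multiplication (using $a_0^{-1}A$ rather than $Aa_0^{-1}$, and $b_yS$ rather than $Sb_y$) is the only real bookkeeping point, and you have it right.
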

\begin{prop}[Ruzsa triangle inequality]
\label{prop:14}
  If $A,B,C$ are finite subsets of a group, then
\[
|AC^{-1}|\leq\frac{|AB^{-1}||BC^{-1}|}{|B|}.
\]
\end{prop}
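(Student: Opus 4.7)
The plan is to prove the Ruzsa triangle inequality via the standard injection argument, which works for arbitrary (not necessarily abelian) groups. The key idea is to construct an injective map $\phi : B \times AC^{-1} \to AB^{-1} \times BC^{-1}$; once such a map is exhibited, the inequality $|B| \cdot |AC^{-1}| \leq |AB^{-1}| \cdot |BC^{-1}|$ follows immediately by comparing cardinalities of domain and codomain, and dividing through by $|B|$ gives the claim.

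To define $\phi$, I would first fix, for each element $d \in AC^{-1}$, a representation $d = a_d c_d^{-1}$ with $a_d \in A$ and $c_d \in C$ (such a representation exists by definition of $AC^{-1}$, though it need not be unique; we just pick one). Then for $(b,d) \in B \times AC^{-1}$, I would define
\[
\phi(b,d) := \bigl(a_d b^{-1},\; b c_d^{-1}\bigr) \in AB^{-1} \times BC^{-1}.
\]
The codomain membership is immediate from $a_d \in A$, $c_d \in C$, and $b \in B$.

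The main (and only) real step is verifying injectivity of $\phi$. Suppose $\phi(b,d) = \phi(b',d')$, so $a_d b^{-1} = a_{d'} (b')^{-1}$ and $b c_d^{-1} = b' c_{d'}^{-1}$. Multiplying these two equalities in order recovers $a_d c_d^{-1} = a_{d'} c_{d'}^{-1}$, i.e.\ $d = d'$. Since the representation $d = a_d c_d^{-1}$ was fixed in advance, this forces $a_d = a_{d'}$ and $c_d = c_{d'}$, and then either of the two original equalities gives $b = b'$. The non-abelian nature of the group causes no trouble here because we multiply the two equalities in the \emph{correct} order so that the middle $b^{-1} \cdot b$ telescopes.

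I do not anticipate any obstacle: the argument is a one-line bookkeeping verification once the map $\phi$ is written down, and the asymmetry between left and right is handled automatically by the order of multiplication. The only subtle point worth flagging in the write-up is that the choice of representatives $(a_d, c_d)$ must be made \emph{before} defining $\phi$, so that "$d = d'$ implies $a_d = a_{d'}$ and $c_d = c_{d'}$" is a tautology rather than an additional claim.
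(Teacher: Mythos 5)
Your proof is correct: the injection $\phi(b,d) = (a_d b^{-1},\, b c_d^{-1})$ with representatives fixed in advance is the standard argument for Ruzsa's triangle inequality in arbitrary groups, and your verification of injectivity (multiplying the two coordinates so that $b^{-1}b$ telescopes, recovering $d=d'$ and then $b=b'$) is complete. The paper itself states this proposition without proof, as a known tool, so there is nothing to compare against; your write-up supplies exactly the standard argument one would cite.
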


\begin{proof}[Proof of Lemma~\ref{lem:6}]
Suppose that at most $|A|/3$ elements of $A$ are contained in an abelian subgroup.  
Then at least $2|A|/3$ element of $A$ are not contained in the unipotent group $U$, so without loss of generality, we may assume that $A$ does intersect the unipotent group $U$.
(That is, we will use $A$ to denote $A\setminus U$.)

We still know that half of the elements of $A$ are not contained in an abelian subgroup, thus there exists an $x$ in $A$ such that
\begin{equation}
  \label{eqx17}
  |x^Ax^{-1}|=|[A,x]| > 1.
\end{equation}
Otherwise, $axa^{-1}x^{-1}=e$ for all $a,x$ in $A$, which implies that the subgroup generated by $A$ is abelian.

The set $x^A=\{axa^{-1}\colon a\in A\}$ is the orbit of $x$ under the action of $G$ on itself by conjugation; the stabilizer of $x$ is denoted $C(x)$.
Since $x\not\in U$, we know that $C(x)$ is conjugate to the diagonal subgroup of $\aff 1\F$; in particular, the only element of $U$ fixed by $C(x)$ under conjugation is the identity element.

By Lemma~\ref{lem:3}, there is an element $a_0$ in $A$ such that
\[
|(a_0^{-1}A)\cap C(x)|\geq\frac{|A|}{|x^A|}.
\]
Let $T$ denote $a_0^{-1}\cap C(x)$.

Now, if $S=[A,x] = x^A\cdot x^{-1}$, then $|S|=|x^A|$; by (\ref{eqx17}) we know $|S| > 1$, so $S$ contains an element of $U$ besides the identity.
Since $|T|=|(a_0^{-1}A)\cap C(x)|$, the previous equation can be restated as
\[
|S||T|\geq |A|.
\]

In addition, note that $|T|\leq |A/U|$, where $A/U$ is the image of $A$ under the quotient map $\pi\colon G\to G/U$.
The inequality $|T|\leq |A/U|$ follows since $\pi$ is injective when restricted to a torus.

Let $S^T$ denote the image of $S$ under the action of $T$ by conjugation.
Since $S\subseteq U$ and $U$ is preserved by conjugation, we have $S\cdot (S^T)^{-1}\subseteq U$.

Let
\[
B=\{z\colon  (\begin{smallmatrix} 1&z\\ 0&1 \end{smallmatrix})\in S, z\not=0\}
\]
and let
\[
C =\{a\colon\exists b  \,(\begin{smallmatrix} a&b\\ 0&1 \end{smallmatrix})\in T\}.
\]
Then
\[
|S\cdot (S^T)^{-1}\cap U|=|S\cdot (S^T)^{-1}|\geq |B-BC|,
\]
since conjugating  $(\begin{smallmatrix} 1&z\\ 0&1 \end{smallmatrix})$  by $(\begin{smallmatrix} a&b\\ 0&1 \end{smallmatrix})$ yields  $(\begin{smallmatrix} 1&az\\ 0&1 \end{smallmatrix})$.

Clearly, $|B|\geq |S|-1$ and since $\pi\colon G\to G/U$ is injective when restricted to $T$, we have $|C|=|T|$.

Note that
\[
S\cdot (S^T)^{-1}= x^Ax^{-1}((x^Ax^{-1})^T)^{-1} = x^Ax^{-1}(x^{TA}x^{-1})^{-1}=x^A(x^{-1})^{TA}.
\]
So that
\[
S\cdot (S^T)^{-1}\subseteq AxA^{-1}A^2x^{-1}A^{-2}\subseteq A^2A^{-1}A^2A^{-3}.
\]

By Lemma~\ref{lem:3}, we have
\[
|A^3A^{-1}A^2A^{-3}|\geq |(A^2A^{-1}A^2A^{-3})\cap U||A/U| \geq |S\cdot (S^T)^{-1}| |A/U|.
\]

By Proposition~\ref{prop:14}, if $|A^3|\leq K|A|$, then
\[
|A^3A^{-1}A^2A^{-3}| \leq K^{10}|A|.
\]

All together, we have
\[
K^{10}|A| \geq |S\cdot (S^T)^{-1}| |A/U| \geq |A/U||B-BC|,
\]
as desired.
\end{proof}
Note that if $B$ contains only $0$, then $|B-BC|=1$; where as if $B$ contains a non-zero element, then $|B-BC|\geq |C|$.

\subsection{Proof of results over $\C$}

The following theorem is an easy consequence of the \ST{} theorem, see \cite[Exercise 8.3.3]{tao2010additive}.
\begin{prop}
\label{prop:12}
If $A,B,C$ are finite subsets of $\C$, then
\[
|A+BC|\gg\sqrt{|A||B||C|}.
\]
\end{prop}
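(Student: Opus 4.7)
The plan is to apply the Szemerédi--Trotter incidence bound in the Elekes style, arranging a family of lines so that each incidence encodes an element of the product--sum set $A+BC$. Write $S=A+BC$. Since $BC=CB$, we may assume without loss of generality that $|B|\leq |C|$.

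For each pair $(a,b)\in A\times B$, let $\ell_{a,b}$ denote the line $y=bx+a$. These lines are pairwise distinct, so we obtain $|L|=|A||B|$ lines in $\C^2$. For each $c\in C$ the point $(c,bc+a)$ lies on $\ell_{a,b}$ and also in the point set $P=C\times S$, since $bc+a\in BC+A=S$. Consequently
\[
I(P,L)\geq |A||B||C|,\qquad |P|=|C||S|,\qquad |L|=|A||B|.
\]
By the Szemerédi--Trotter theorem over $\C$ (e.g.\ T\'oth, Zahl, or Solymosi--Tardos),
\[
|A||B||C|\ \ll\ (|P||L|)^{2/3}+|P|+|L|\ =\ (|A||B||C|\cdot|S|)^{2/3}+|C||S|+|A||B|.
\]

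Now I would split into three cases according to which term on the right is largest. If the first term dominates, cubing and cancelling yields $|S|^{2}\gg |A||B||C|$, which is the desired bound. If $|C||S|$ dominates, then $|S|\geq |A||C|$, and because $|B|\leq |C|$ we have $|A||C|\geq |B|$, so $|S|^{2}\geq |A|^{2}|C|^{2}\geq |A||B||C|$. If $|A||B|$ dominates, then $|C|\leq 1$, in which case the inequality is trivial (or handled by the bound $|A+C|\geq \max(|A|,|C|)\geq \sqrt{|A||C|}$ when $|B|=1$).

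The only real obstacle is the non-dominant regimes of the Szemerédi--Trotter inequality, but the symmetry $BC=CB$ lets us normalise $|B|\leq |C|$ and check these degenerate cases by hand. The heart of the argument is the incidence count, and this is where the hypothesis that $\F=\C$ enters, since the Szemerédi--Trotter bound is used over $\C$ rather than over $\F_p$.
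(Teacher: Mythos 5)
The paper does not actually supply a proof of Proposition~\ref{prop:12}; it cites it as an exercise in Tao--Vu. Your argument is the standard Elekes-style incidence count that the cited exercise has in mind, so the route is the expected one, but the case analysis contains a genuine error.

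With your choice of point set $P=C\times S$ and line set $|L|=|A||B|$, the Szemer\'edi--Trotter inequality reads
\[
|A||B||C|\ \ll\ \bigl(|A||B||C|\,|S|\bigr)^{2/3}+|C||S|+|A||B|.
\]
In the middle case, ``$|C||S|$ dominates'' yields $|S|\gg |A||B|$, \emph{not} $|S|\geq |A||C|$ as you wrote. From $|S|\gg |A||B|$ alone you get $|S|^2\gg |A|^2|B|^2$, and to conclude $|S|^2\gg |A||B||C|$ you would need $|A||B|\gg |C|$, which your normalisation $|B|\leq |C|$ does not give. The chain ``$|S|\geq |A||C|$, hence $|S|^2\geq |A|^2|C|^2\geq |A||B||C|$'' is internally consistent, but its premise does not follow from your setup: you have quietly switched to the case analysis appropriate for the \emph{transposed} arrangement $P=B\times S$, $L=\{y=cx+a:(a,c)\in A\times C\}$, in which $|P|$-dominance does give $|S|\gg |A||C|$ and then $|B|\leq |C|$ closes the case. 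Two fixes are available: either actually use $P=B\times S$ so that the normalisation does its job, or keep $P=C\times S$ and supplement $|S|\gg |A||B|$ with the trivial bound $|S|=|A+BC|\geq |A+b_0C|\geq |C|$ for some nonzero $b_0\in B$, giving $|S|^2\gg |A||B|\cdot |C|$ directly (this avoids needing the normalisation at all, and also cleans up your third case, where you invoke $|A+C|$ rather than a bound on $A+BC$). Note also that some nondegeneracy (a nonzero element in $B$, nonempty $A$ and $C$) is needed for these trivial bounds and indeed for the proposition as stated, since $B=\{0\}$ with $|C|>|A|$ is a counterexample; the paper leaves this implicit.
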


\begin{proof}[Proof of Theorem~\ref{thm:17}]
If at least one third of the elements in $H$ are contained in an abelian subgroup, then we are done.

Otherwise, by Lemma~\ref{lem:6} and Proposition~\ref{prop:12}, we have
\[
K^{10}|H| \gg |H/U| |B-BC|\gg |H/U| |B||C|^{1/2} = |H/U||S||T|^{1/2}.
\]
Since $|T|\leq |H/U|$ and $|S||T|\geq |H|$, we have
\[
K^{10}|H| \gg |H/U|^{1/2} |S||T| \geq |H/U|^{1/2}|H|.
\]
\end{proof}

\subsection{Proof of results over $\F_p$}
\label{sec:proof-Fp}

The following sum-product theorem is a slight improvement of a result of Roche-Newton, Rudnev, and Shkredov~\cite{roche-newton2016sum-product}, due to Stevens and de Zeeuw \cite[Corollary 10]{stevens2017improved}.
\begin{prop}
\label{prop:13}
If $A,B,C\subseteq\F_p$ where $p$ is prime, then
\[
|A+BC|\gg \min \left( \sqrt{|A||B||C|}, p \right).
\]
\end{prop}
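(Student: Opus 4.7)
The plan is to apply the Elekes incidence method in $\F_p^2$, with the Stevens--de Zeeuw point-line incidence bound in place of the Szemer\'edi--Trotter theorem. This is the $\F_p$-analog of the proof of Proposition~\ref{prop:12} over $\C$.

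Set up the incidence configuration as follows. Let $P = (A+BC) \times B \subseteq \F_p^2$, so $|P| = |A+BC|\,|B|$, and let $L = \{\ell_{a,c} : y = (x-a)/c\}$ indexed by $(a,c) \in A \times (C \setminus \{0\})$, so $|L| \approx |A||C|$ (the $c = 0$ case contributes at most $|A|$ to $|A+BC|$ and may be discarded). Each triple $(a,b,c) \in A \times B \times (C \setminus \{0\})$ produces an incidence since $(a+bc, b) \in P$ lies on $\ell_{a,c}$, and these incidences are all distinct: the line $\ell_{a,c}$ determines $(a,c)$ from its slope and intercept, and then $b$ is read off as the $y$-coordinate of the incidence point. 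Hence
\[
I(P,L) \gg |A||B||C|.
\]

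Next, apply the Stevens--de Zeeuw incidence bound to $(P, L)$, exploiting the Cartesian product structure of $P$, to obtain a Szemer\'edi--Trotter-style upper bound $I(P, L) \ll (|P||L|)^{2/3}$, valid in the regime where a certain polynomial inequality relating $|P|$, $|L|$, and $p$ holds. Combining this with the lower bound on $I(P,L)$ gives
\[
|A||B||C| \ll (|A+BC|\,|A|\,|B|\,|C|)^{2/3},
\]
which rearranges to $|A+BC| \gg \sqrt{|A||B||C|}$ in that regime.

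When the polynomial $p$-constraint fails, the configuration is too large to behave as it would over a characteristic-zero field; in that saturated regime the Stevens--de Zeeuw theorem (or a direct pigeonhole) forces $|A+BC| \gg p$, which accounts for the $\min$ in the statement. The main delicate point is bookkeeping: one must verify that the threshold separating the two regimes is precisely the one at which $\sqrt{|A||B||C|}$ meets $p$, so that the two cases glue cleanly. Beyond this, the argument is the classical Elekes incidence reduction transplanted from $\R$ to $\F_p$.
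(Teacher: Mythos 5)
The paper does not prove Proposition~\ref{prop:13} at all: it cites it verbatim as \cite[Corollary 10]{stevens2017improved} (Stevens--de~Zeeuw), noting that it slightly improves an earlier result of Roche-Newton, Rudnev, and Shkredov. So there is no paper proof to compare against; your proposal is attempting to reprove the cited result. Your high-level plan (Elekes incidence configuration plus the Stevens--de~Zeeuw point-line incidence bound) is indeed the strategy used in the cited source, and your incidence setup is fine: $P=(A+BC)\times B$, lines $y=(x-a)/c$, and the distinct-incidences count $I(P,L)\gg|A||B||C|$.

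However, as written the argument has a genuine gap. The Stevens--de~Zeeuw bound is \emph{not} of the symmetric form $I(P,L)\ll(|P||L|)^{2/3}$; no such symmetric Szemer\'edi--Trotter bound holds over $\F_p$ for arbitrary point sets, and the whole point of Stevens--de~Zeeuw is to exploit the Cartesian structure asymmetrically. Their Theorem~3 gives, for $P=X\times Y$ with $|X|\le|Y|$, the bound $I(X\times Y,L)\ll|X|^{3/4}|Y|^{1/2}|L|^{3/4}+|L|$, valid only when $|X||Y|^2\le|L|^3$ and $|X||L|\ll p^2$. To run your argument correctly you would need to: (i) invoke this asymmetric form, noting $|B|\le|A+BC|$ so that $X=B$ and $Y=A+BC$ are correctly oriented (this holds because $a+Bc\subseteq A+BC$ for any fixed $a\in A$, $c\in C\setminus\{0\}$); (ii) carry through the exponents, which do give $|A||B||C|\ll|B|^{3/4}|A+BC|^{1/2}(|A||C|)^{3/4}$ and hence $|A+BC|\gg\sqrt{|A||B||C|}$; and (iii) handle, explicitly, the regime where the hypotheses $|B|\,|A+BC|^2\le(|A||C|)^3$ or $|B||A||C|\ll p^2$ fail. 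Your proposal acknowledges this last point as ``the main delicate point'' and then defers it, but this is precisely where the $\min(\cdot,p)$ enters, and it is not automatic that the two regimes glue at $\sqrt{|A||B||C|}\approx p$. In short: right approach, wrong statement of the key incidence bound, and the critical case analysis is deferred rather than done.
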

In particular,
\begin{equation}
  \label{eqx3}
  |B\pm BC|\gg \min \left(|B||C|^{1/2},  p \right).
\end{equation}
\begin{proof}[Proof of the product theorem over $\F_p$]
If more than one third of $A$ is contained in an abelian subgroup, then we are done.

Otherwise, by Lemma~\ref{lem:6} and Proposition~\ref{prop:13}, we have
\[
K^{10}|A| \gg |A/U| |B-BC|\gg |A/U| \min \left( |B||C|^{1/2}, p \right).
\]

If the minimum is $|B||C|^{1/2}$, then as in the previous proof, we have
\[
K^{10}|A|\gg |A/U|^{1/2}|A|.
\]
% If the minimum is $|B|^2$, then we have
% \[
% K^{10}|A| \gg |A/U||B|^2 \geq |A/U||S|\frac{|A|}{|T|} \geq |S||A|,
% \]
% so $|S|\ll K^{10}$, which implies that
% \[
% |(a_0^{-1}A)\cap C(x)|=|T|\gg \frac{|A|}{K^{10}}.
% \]
If the minimum is $p$, then we have
\[
K^{10}|A|\gg |A/U|p.
\]
\end{proof}

% -- bibliography --
\bibliographystyle{plain}
\bibliography{/Users/brendan/Dropbox/library}

\end{document}